\documentclass[preprint,12pt]{elsarticle}
\usepackage{amsmath,amssymb,amsfonts}
\usepackage{bm}
\usepackage{graphicx}
\usepackage{mathtools}
\usepackage{amsthm}
\usepackage{hyperref}
\usepackage{enumitem}
\usepackage{tikz}
\usetikzlibrary{calc}
\usepackage{stmaryrd}
\usepackage{float}
\usepackage{subcaption}
\journal{Journal of Computational Physics}

\newtheorem{theorem}{Theorem}
\newtheorem{lemma}{Lemma}

\newtheorem{remark}{Remark}[section]
\usepackage[lined,boxed,linesnumbered,ruled]{algorithm2e}

\usepackage[a4paper,left=0.75in,right=0.75in,top=0.85in,bottom=1in]{geometry}

\begin{document}

\begin{frontmatter}

\title{Positivity-Preserving and Entropy-Stable Oscillation-Eliminating DGSEM for the Compressible Euler Equations on Curvilinear Meshes with Adaptive Mesh Refinement}

\author[1]{Jieling Yang\corref{cor}\fnref{fn1}}
\ead{jyang38@nd.edu}

\author[1]{Guosheng Fu\fnref{fn1}}
\ead{gfu@nd.edu}

\address[1]{Department of Applied and Computational Mathematics and Statistics (ACMS), University of Notre Dame, Notre Dame, IN 46556, USA}

\cortext[cor]{Corresponding author}

\fntext[fn1]{This work is supported in part by NSF DMS-2410740.}

\begin{abstract}
We extend the entropy-stable oscillation-eliminating discontinuous Galerkin spectral element method (ES-OEDG) on curvilinear meshes \cite{YangFu26} to adaptive mesh refinement (AMR) grids with nonconforming interfaces. The formulation is developed for two-dimensional curvilinear quadrilateral meshes under a 2:1 refinement constraint, allowing a single level of hanging nodes. The elementwise volume discretization and geometric mapping are retained, while the oscillation elimination procedure and interface coupling are adapted to accommodate nonconforming interfaces.

A central contribution of this work is the design and analysis of numerical fluxes for nonconforming interfaces. We first construct an entropy-stable flux that guarantees global conservation and a semi-discrete entropy inequality. We show, however, that for polynomial degree \(N\ge 2\), the presence of negative entries in the nonconforming interpolation operators leads to a loss of formal high-order consistency. To address this limitation, we further propose a mortar-based flux that preserves high-order accuracy by performing interpolation at the solution level and evaluating standard two-point fluxes on the fine-side mortars, at the expense of losing provable entropy stability.

In addition, we develop a rigorous extension of the Zhang--Shu positivity-preserving framework to curvilinear AMR meshes. We prove that, under forward Euler time stepping and a suitable CFL condition, the scheme—using either the entropy-stable or the mortar-based flux—preserves positivity of the cell-average density and pressure. Combined with the Zhang--Shu limiter, this yields a fully discrete scheme that maintains admissibility at all nodal points. We further incorporate adaptive mesh refinement based on shock indicators, together with a conservative and positivity-preserving data transfer procedure between successive AMR meshes, leading to an overall algorithm that is both robust and efficient. Numerical experiments on both Cartesian and curvilinear AMR grids demonstrate the high-order accuracy of the mortar-based fluxes, as well as the robustness, effectiveness, and positivity-preserving properties of the proposed methods.
\end{abstract}

\begin{keyword}
Entropy stability \sep Oscillation Elimination \sep DGSEM \sep Curvilinear meshes \sep Adaptive mesh refinement \sep Hanging nodes \sep Positivity preservation 
\end{keyword}

\end{frontmatter}

\section{Introduction}

High-order entropy-stable discontinuous Galerkin (DG) methods on curvilinear meshes \cite{fisher2012high,carpenter2016entropy,crean2018entropy,chan2019discretely} provide a robust and accurate framework for compressible flow simulations involving complex geometries. In our previous work \cite{YangFu26}, we developed an entropy-stable oscillation-eliminating discontinuous Galerkin spectral element method (ES-OEDG) for the compressible Euler equations on conforming curvilinear quadrilateral meshes, combining entropy-stable discretization with an oscillation-eliminating (OE) procedure to enhance robustness in the presence of strong shocks.

The extension of entropy-stable high-order discretizations to nonconforming meshes arising from adaptive refinement has been extensively studied. Friedrich et al.~\cite{FriedrichEtAl2018} developed entropy-stable $h/p$ nonconforming DG discretizations on Cartesian meshes, demonstrating that standard mortar-based interface treatments must be modified to ensure entropy stability for nonlinear systems. These ideas have been extended to curvilinear grids, where interface interpolation, metric consistency, and discrete geometric conservation laws are essential for maintaining stability and conservation properties \cite{DelReyFernandezEtAl2020}. More recently, Chan and co-authors developed entropy-stable formulations on nonconforming meshes using projection and mortar-based interface treatments, including efficient constructions for both Lobatto and Gauss collocation schemes \cite{ChanEtAl2021}. Collectively, these works highlight that the design of interface coupling operators is the central challenge in extending entropy-stable schemes to nonconforming and adaptive meshes.

Despite these advances, a rigorous extension of positivity-preserving techniques for entropy-stable high-order schemes to adaptive mesh refinement (AMR) with nonconforming interfaces is still lacking. In particular, the interaction between entropy-stable interface coupling and positivity-preserving mechanisms on curvilinear AMR meshes remains largely unexplored.

In this work, we extend the conforming-curvilinear ES-OEDG framework of \cite{YangFu26} to AMR meshes with nonconforming interfaces. We focus on two-dimensional quadrilateral meshes under a 2:1 refinement constraint, allowing a single level of hanging nodes. The elementwise DG discretization is retained from the conforming-curvilinear formulation, and only the nonconforming interface coupling is modified.

Within this framework, we first develop an entropy-stable interface treatment for hanging interfaces that preserves primary conservation and satisfies a semi-discrete entropy inequality. We show, however, that for polynomial degree \(N\ge 2\), the presence of negative entries in the nonconforming interpolation operators leads to a loss of formal high-order consistency, resulting in at most first-order accuracy of the flux. To address this limitation, we further propose a classical mortar-based interface flux that preserves high-order accuracy and positivity by performing interpolation at the solution level and evaluating standard two-point fluxes on the fine-side mortars, at the expense of losing provable entropy stability. This reveals a fundamental trade-off between provable entropy stability and high-order consistency for nonconforming interface coupling. 

Another central result of this work is a rigorous extension of the Zhang--Shu positivity-preserving framework to curvilinear nonconforming meshes. We prove that, under forward Euler time stepping and a suitable CFL-type condition, the scheme—using either the entropy-stable flux or the mortar-based flux—preserves positivity of cell-average density and pressure. This, in turn, enables positivity of the fully discrete solution through the application of a Zhang--Shu scaling limiter. In addition, we introduce a conservative and positivity-preserving data transfer procedure between successive AMR meshes, ensuring that the overall AMR algorithm maintains admissibility.

The remainder of the paper is organized as follows. In Section~\ref{sec:conforming_esdgsem}, we briefly review the entropy-stable DG formulation for the Euler equations on curvilinear quadrilateral meshes developed in \cite{YangFu26}. Section~\ref{sec:amr} presents the extension of the scheme to nonconforming meshes, including the construction of entropy-stable interface fluxes and the analysis of conservation, entropy stability, and positivity preservation. Section~\ref{sec:pp_nc} introduces the mortar-based flux and discusses its high-order accuracy and positivity-preserving properties. Section~\ref{sec:oe_amr} describes additional algorithmic components, including oscillation elimination on nonconforming meshes, adaptive mesh refinement with shock-indicator-based refinement and coarsening, and a conservative and positivity-preserving data transfer procedure. Numerical experiments demonstrating the accuracy, robustness, and positivity-preserving properties of the proposed methods are presented in Section~\ref{sec:num}. Finally, concluding remarks are given in Section~\ref{sec:con}.

\section{Entropy-stable DGSEM on conforming curvilinear quadrilateral meshes}
\label{sec:conforming_esdgsem}

In this section, we briefly summarize the entropy-stable DGSEM on conforming curvilinear quadrilateral meshes developed in \cite{YangFu26}, which serves as the baseline scheme for the AMR extension proposed in this work. We retain the notation of \cite{YangFu26} whenever possible. Since the novelty of the present paper lies in the treatment of nonconforming interfaces, we restrict the discussion here to the ingredients needed later for the AMR formulation and its analysis.

\subsection{Euler equations and entropy variables}
\label{subsec:euler_entropy}

We consider the two-dimensional compressible Euler equations in conservative form,
\begin{equation}
\partial_t \bm U + \partial_x \bm f(\bm U) + \partial_y \bm g(\bm U) = 0,
\label{eq:euler}
\end{equation}
where the conservative variable is
\begin{equation}
\bm U = (\rho,\rho u,\rho v,E)^T.
\end{equation}
Here, $\rho$ denotes the density, $\bm u=(u,v)^T$ the velocity, and
\begin{equation}
E = \rho e + \frac12 \rho |\bm u|^2
\end{equation}
the total energy, with $e$ the specific internal energy. The Cartesian fluxes are given by
\begin{equation*}
\bm f(\bm U)=
(\rho u,\rho u^2+p,\rho uv,(E+p)u)^T,
\qquad
\bm g(\bm U)=
(\rho v,\rho uv,\rho v^2+p,(E+p)v)^T.
\end{equation*}
Throughout this work, we assume an ideal-gas equation of state,
\begin{equation}
p = (\gamma-1)\rho e, \qquad \gamma = 1.4.
\label{eq:eos}
\end{equation}

For entropy analysis, we adopt the mathematical entropy
\begin{equation}
\eta(\bm U) = -\frac{\rho s}{\gamma-1},
\qquad
s = \log(p\rho^{-\gamma}),
\label{eq:entropy}
\end{equation}
with associated entropy flux
\begin{equation}
\bm q(\bm U) = -\frac{\rho s\,\bm u}{\gamma-1}.
\end{equation}
The corresponding entropy variables are
\begin{equation}
\bm V = \frac{\partial \eta}{\partial \bm U}
=
\begin{pmatrix}
\dfrac{\gamma-s}{\gamma-1} - \dfrac{|\bm u|^2}{2c^2}\\[0.8ex]
u/c^2\\[0.4ex]
v/c^2\\[0.4ex]
-1/c^2
\end{pmatrix},
\qquad
c = \sqrt{\gamma p/\rho},
\label{eq:entropy_variables}
\end{equation}
and the entropy potential fluxes are
\begin{equation}
\bm \psi = (\bm \psi^f,\bm \psi^g) = \rho \bm u.
\label{eq:entropy_potential}
\end{equation}

\subsection{Curvilinear formulation}
\label{subsec:curvilinear_formulation}

Let $\Omega_h=\{\Omega_e\}_{e=1}^{N_e}$ be a conforming partition of the physical domain into curved quadrilateral elements. Each element $\Omega_e$ is obtained from the reference element
\[
\Omega_{\mathrm{ref}} = [-1,1]^2
\]
through a smooth mapping
\[
\bm x^e(\xi,\eta) = \bigl(x^e(\xi,\eta),y^e(\xi,\eta)\bigr).
\]
Pulling back \eqref{eq:euler} to $\Omega_{\mathrm{ref}}$ yields
\begin{equation}
\mathcal J^e \partial_t \bm U^e
+
\partial_\xi \widetilde{\bm f}^{\,e}
+
\partial_\eta \widetilde{\bm g}^{\,e}
= 0,
\label{eq:curv_euler}
\end{equation}
where $\mathcal J^e$ is the Jacobian of the mapping,
\begin{equation}
\mathcal J^e = x_\xi^e y_\eta^e - x_\eta^e y_\xi^e,
\end{equation}
and the contravariant fluxes are defined by
\begin{equation}
\widetilde{\bm f}^{\,e}
=
y_\eta^e \bm f(\bm U^e) - x_\eta^e \bm g(\bm U^e),
\qquad
\widetilde{\bm g}^{\,e}
=
- y_\xi^e \bm f(\bm U^e) + x_\xi^e \bm g(\bm U^e).
\label{eq:contravariant_fluxes}
\end{equation}

A compatible discretization of the metric terms is required in order to preserve the discrete geometric identities and, consequently, free-stream preservation and entropy stability \cite{Kopriva2006}. As in \cite{YangFu26}, we adopt an isoparametric discretization, in which the geometric mapping and the solution are approximated using the same polynomial space and nodal operators. This choice ensures that the discrete geometric conservation laws are satisfied.

\subsection{Entropy-stable DGSEM on the reference element}
\label{subsec:es_dgsem}

Let $\{\xi_i\}_{i=0}^N$ and $\{\eta_j\}_{j=0}^N$ denote the Legendre--Gauss--Lobatto (LGL) nodes on $[-1,1]$, with corresponding quadrature weights $\{w_i\}_{i=0}^N$. We approximate the solution on $\Omega_{\mathrm{ref}}$ in the tensor-product polynomial space
\begin{equation}
\mathbb Q_N(\Omega_{\mathrm{ref}})
=
\mathrm{span}\{\phi_i(\xi)\phi_j(\eta):0\le i,j\le N\},
\end{equation}
where $\phi_i$ are the one-dimensional Lagrange basis functions associated with the LGL nodes.

The one-dimensional differentiation matrix is defined by
\begin{equation}
D_{ij} = \phi_j'(\xi_i),
\qquad i,j=0,\dots,N,
\end{equation}
and satisfies the summation-by-parts (SBP) property
\begin{equation}
w_i D_{ij} + w_j D_{ji} = \delta_{iN}\delta_{jN} - \delta_{i0}\delta_{j0},
\; \text{and }
\sum_{j=0}^N D_{ij}=0.
\end{equation}

To obtain an entropy-stable discretization, we employ a split-form DGSEM based on symmetric two-point entropy-conservative fluxes. Let
\[
\bm F^\#(\bm U_L,\bm U_R)
=
\bigl(\bm f^\#(\bm U_L,\bm U_R),\bm g^\#(\bm U_L,\bm U_R)\bigr)
\]
be a consistent and symmetric numerical flux satisfying Tadmor's entropy conservation condition
\begin{equation}
(\bm V_R-\bm V_L)\cdot \bm F^\#(\bm U_L,\bm U_R)
=
\bm \psi_R - \bm \psi_L,
\end{equation}
where $\bm V=\partial \eta/\partial \bm U$ are the entropy variables associated with \eqref{eq:entropy}, and $\bm \psi=(\bm \psi^f,\bm \psi^g)$ are the corresponding entropy potential fluxes defined in \eqref{eq:entropy_potential}. Subscripts $L$ and $R$ denote evaluation at left and right states. In this work, we adopt Chandrashekar's entropy-conservative flux \cite{chandrashekar2013kinetic}; see also \cite[Eq.~(40)]{YangFu26}.

On each element $\Omega_e$, the solution is represented in nodal form as
\[
\bm U^e(\xi,\eta) \approx \bm U_N^e(\xi,\eta)
=
\sum_{i=0}^N \sum_{j=0}^N \bm U^e_{ij}\,\phi_i(\xi)\phi_j(\eta),
\]
and all fluxes are evaluated at the nodal states $\bm U^e_{ij}$.

The contravariant entropy-conservative volume fluxes are constructed using symmetric averages of the geometric coefficients. For example,
\begin{equation}
\begin{aligned}
(\widetilde{\bm f}^{\,e,\#})_{(i,m),j}
&=
\{\!\{y_\eta^e\}\!\}_{(i,m),j}\,
\bm f^\#\!\left(\bm U^e_{ij},\bm U^e_{mj}\right)
\\
&\quad
-
\{\!\{x_\eta^e\}\!\}_{(i,m),j}\,
\bm g^\#\!\left(\bm U^e_{ij},\bm U^e_{mj}\right),\\
(\widetilde{\bm g}^{\,e,\#})_{i,(jN)}
&=
-
\{\!\{y_\xi^e\}\!\}_{i,(j,N)}\,
\bm f^\#\!\left(\bm U^e_{ij},\bm U^e_{iN}\right)
\\
&\quad
+
\{\!\{x_\xi^e\}\!\}_{i,(j,N)}\,
\bm g^\#\!\left(\bm U^e_{ij},\bm U^e_{iN}\right).
\end{aligned}
\end{equation}
Here $\{\!\{\cdot\}\!\}$ denotes the arithmetic average, e.g.,
\[
\{\!\{x_\eta^e\}\!\}_{(i,m),j}
=
\frac{1}{2}\Bigl((x_\eta^e)_{ij} + (x_\eta^e)_{mj}\Bigr),
\]
and similarly for the other metric terms.

Using tensor-product LGL quadrature, the entropy-stable DGSEM on each element $\Omega_e$ takes the nodal vector form
\begin{equation}
\begin{aligned}
&w_i w_j \Bigg(
\mathcal J^e_{ij} \partial_t \bm U^e_{ij}
+
2 \sum_{m=0}^N D_{i m} (\widetilde{\bm f}^{\,e,\#})_{(i,m),j}
+
2 \sum_{n=0}^N D_{j n} (\widetilde{\bm g}^{\,e,\#})_{i,(jN)}
\Bigg)
\\
&\quad
+
w_i \Bigl(
(\Delta \widetilde{\bm g}^{\,e})_{i0}\,\delta_{j0}
-
(\Delta \widetilde{\bm g}^{\,e})_{iN}\,\delta_{jN}
\Bigr)\\
&\quad+
w_j \Bigl(
(\Delta \widetilde{\bm f}^{\,e})_{0j}\,\delta_{i0}
-
(\Delta \widetilde{\bm f}^{\,e})_{Nj}\,\delta_{iN}
\Bigr)
= \bm 0,
\end{aligned}
\label{eq:es_dgsem}
\end{equation}
for all $0\le i,j\le N$, where
\[
\Delta \widetilde{\bm f}^{\,e}
=
\widetilde{\bm f}^{\,e}-\widetilde{\bm f}^{\,e,*},
\qquad
\Delta \widetilde{\bm g}^{\,e}
=
\widetilde{\bm g}^{\,e}-\widetilde{\bm g}^{\,e,*}.
\]

The interface numerical fluxes $\widetilde{\bm f}^{\,e,*}$ and $\widetilde{\bm g}^{\,e,*}$ define the coupling between neighboring elements through the boundary terms of \eqref{eq:es_dgsem}. At conforming interfaces, the numerical flux is defined in the physical normal direction. Let $E=\partial\Omega_e\cap\partial\Omega_{\mathrm{nbr}}$ be an interface shared by element $\Omega_e$ and its neighboring element $\Omega_{\mathrm{nbr}}$, and let
\[
\bm n = \mathcal J^e (\mathcal G^e)^{-T}\hat{\bm n}
\]
denote the outward metric-scaled normal vector on $\partial\Omega_e$, where $\mathcal G^e$ is the Jacobian matrix of the mapping from the reference element to $\Omega_e$,
\[
\mathcal G^e =
\begin{pmatrix}
x_\xi^e & x_\eta^e \\
y_\xi^e & y_\eta^e
\end{pmatrix},
\]
and $\hat{\bm n}$ is the outward unit normal on the reference element.

Let $\bm U^e$ and $\bm U^{e,\mathrm{nbr}}$ denote the traces of the solution on the two sides of $E$. The physical numerical flux is defined by the local Lax--Friedrichs flux
\begin{equation}
\label{lax}
\bm F^*(\bm U^e,\bm U^{e,\mathrm{nbr}})\cdot \bm n
=
\frac12\Bigl(
\bm F(\bm U^e)\cdot\bm n + \bm F(\bm U^{e,\mathrm{nbr}})\cdot\bm n
\Bigr)
-
\frac{\alpha\|\bm n\|}{2}\bigl(\bm U^{e,\mathrm{nbr}}-\bm U^e\bigr),
\end{equation}
where $\alpha$ is a suitable maximum wave-speed estimate. Typically, we take
\begin{equation}
\label{alpha}
\alpha =
\max\bigl\{
|\boldsymbol u_L \cdot \bm n_u| + c_L,\;
|\boldsymbol u_R \cdot \bm n_u| + c_R
\bigr\},
\end{equation}
where $\bm n_u = \bm n / \|\bm n\|$ denotes the unit physical normal direction, $c_{L/R}$ are the speeds of sound at the left and right states, respectively, and $\boldsymbol u_{L/R} \cdot \bm n_u$ are the corresponding normal velocities.
The local Lax--Friedrichs flux \eqref{lax} is entropy stable provided that $\alpha$ is no smaller than the maximum wave speed of the exact Riemann problem at the interface. A computable lower bound for $\alpha$ that guarantees entropy stability was derived in \cite{GuermondPopov16} using a two-wave approximation.

The contravariant numerical fluxes in \eqref{eq:es_dgsem} are defined so that their contraction with the reference normal reproduces the (metric-scaled) physical normal flux, i.e.,
\begin{equation}
\label{conforming_flux}
(\widetilde{\bm f}^{\,e,*},\widetilde{\bm g}^{\,e,*})\cdot \hat{\bm n}
=
\bm F^*(\bm U^e,\bm U^{e,\mathrm{nbr}})\cdot \bm n.
\end{equation}

The following lemma summarizes the fundamental elementwise properties of the entropy-stable DGSEM. For completeness, we state the result explicitly; see \cite{YangFu26} for the detailed derivation.

\begin{lemma}[Single-element conservation and entropy balance]
\label{lma:1}
Let $\{\bm U_N^e\}_e$ denote the numerical solution of the entropy-stable DGSEM~\eqref{eq:es_dgsem}.  
On a single element $\Omega_e$, the scheme satisfies the following properties.

\medskip
\noindent
\textbf{(i) Primary conservation.}
\begin{equation}
\label{eq_cc}
\begin{aligned}
\sum_{i,j=0}^N
w_i w_j\,
\mathcal{J}_{ij}^e\,\partial_t \bm U^e_{ij}
= {}
\sum_{i=0}^N w_i
\Bigl(
\widetilde{\bm g}^{\,e,*}_{i0}
-
\widetilde{\bm g}^{\,e,*}_{iN}
\Bigr)
+
\sum_{j=0}^N w_j
\Bigl(
\widetilde{\bm f}^{\,e,*}_{0j}
-
\widetilde{\bm f}^{\,e,*}_{Nj}
\Bigr).
\end{aligned}
\end{equation}

\medskip
\noindent
\textbf{(ii) Entropy balance.}
\begin{equation}
\label{eq_ss}
\begin{aligned}
\sum_{i,j=0}^N
w_i w_j\,
\mathcal{J}_{ij}^e\,
\partial_t \eta(\bm U^e_{ij})
= {} &
\sum_{i=0}^N w_i
\Bigl(
\widetilde{\bm g}^{\,e,*}_{i0} \cdot \bm V^e_{i0}
-
\tilde{\bm \psi}^{\,e,g}_{i0}
\Bigr)
\\
&\; -
\sum_{i=0}^N w_i
\Bigl(
\widetilde{\bm g}^{\,e,*}_{iN} \cdot \bm V^e_{iN}
-
\tilde{\bm \psi}^{\,e,g}_{iN}
\Bigr)
\\
&\; +
\sum_{j=0}^N w_j
\Bigl(
\widetilde{\bm f}^{\,e,*}_{0j} \cdot \bm V^e_{0j}
-
\tilde{\bm \psi}^{\,e,f}_{0j}
\Bigr)
\\
&\; -
\sum_{j=0}^N w_j
\Bigl(
\widetilde{\bm f}^{\,e,*}_{Nj} \cdot \bm V^e_{Nj}
-
\tilde{\bm \psi}^{\,e,f}_{Nj}
\Bigr).
\end{aligned}
\end{equation}

Here $\bm V^e_{ij} = (\partial \eta / \partial \bm U)(\bm U^e_{ij})$ denotes the entropy variables evaluated at the nodal states, and the contravariant entropy potential fluxes are defined by
\[
\tilde{\bm \psi}^{\,e,f}_{ij}
=
(y_\eta^e)_{ij}\,\bm \psi^f_{ij}
-
(x_\eta^e)_{ij}\,\bm \psi^g_{ij},
\qquad
\tilde{\bm \psi}^{\,e,g}_{ij}
=
-(y_\xi^e)_{ij}\,\bm \psi^f_{ij}
+
(x_\xi^e)_{ij}\,\bm \psi^g_{ij}.
\]
\end{lemma}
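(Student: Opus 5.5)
To prove (i), I will sum the nodal equations \eqref{eq:es_dgsem} over all $0\le i,j\le N$ and treat the volume and surface contributions separately. For the $\xi$-direction volume term, I fix $j$ and write $S_{im}=w_iD_{im}$. The two-point contravariant flux $(\widetilde{\bm f}^{\,e,\#})_{(i,m),j}$ is symmetric in $(i,m)$, because both $\bm f^\#,\bm g^\#$ and the metric averages are symmetric. Hence
\[
2\sum_{i,m}S_{im}(\widetilde{\bm f}^{\,e,\#})_{(i,m),j}=\sum_{i,m}(S_{im}+S_{mi})(\widetilde{\bm f}^{\,e,\#})_{(i,m),j}.
\]
By the SBP property this equals $(\widetilde{\bm f}^{\,e,\#})_{(N,N),j}-(\widetilde{\bm f}^{\,e,\#})_{(0,0),j}$. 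Consistency of $\bm F^\#$ gives $(\widetilde{\bm f}^{\,e,\#})_{(i,i),j}=\widetilde{\bm f}^{\,e}_{ij}$, so the $\xi$-volume term reduces to $\sum_j w_j(\widetilde{\bm f}^{\,e}_{Nj}-\widetilde{\bm f}^{\,e}_{0j})$. The surface terms contribute $\sum_j w_j(\Delta\widetilde{\bm f}^{\,e}_{0j}-\Delta\widetilde{\bm f}^{\,e}_{Nj})$. Adding the two, the volume traces $\widetilde{\bm f}^{\,e}$ cancel, leaving $\sum_j w_j(\widetilde{\bm f}^{\,e,*}_{Nj}-\widetilde{\bm f}^{\,e,*}_{0j})$. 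The $\eta$-direction is identical, and moving these terms to the right-hand side gives \eqref{eq_cc}.

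For (ii), I will take the dot product of each nodal equation with $\bm V^e_{ij}$ and sum, using $\bm V\cdot\partial_t\bm U=\partial_t\eta$. For the $\xi$ volume term I again symmetrize with $S$, this time writing $S_{im}=\tfrac12(S_{im}+S_{mi})+\tfrac12(S_{im}-S_{mi})$. The symmetric part reduces by SBP to boundary terms $\bm V_{Nj}\cdot\widetilde{\bm f}^{\,e}_{Nj}-\bm V_{0j}\cdot\widetilde{\bm f}^{\,e}_{0j}$. Relabeling $i\leftrightarrow m$ turns the remaining part into
\[
\sum_{i,m}S_{im}(\bm V_i-\bm V_m)\cdot(\widetilde{\bm f}^{\,e,\#})_{(i,m),j}.
\]
Expanding $\widetilde{\bm f}^{\,e,\#}$ via the metric averages and applying Tadmor's condition componentwise gives $(\bm V_m-\bm V_i)\cdot\bm f^\#=\psi^f_m-\psi^f_i$, and likewise for $\bm g^\#$. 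The remaining sum is therefore
\[
-\sum_{i,m}S_{im}\,\{\!\{y_\eta\}\!\}_{(i,m)}(\psi^f_m-\psi^f_i)+\sum_{i,m}S_{im}\,\{\!\{x_\eta\}\!\}_{(i,m)}(\psi^g_m-\psi^g_i).
\]

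This metric step is the main obstacle. I expand the averages, use $\sum_m D_{im}=0$ to remove the $\psi_i$ pieces, and use SBP to move $D$ onto the metric terms. The result is a boundary term involving $\tilde{\bm\psi}^{\,e,f}$ at $i=0,N$, plus interior terms of the form $\psi^f_{ij}\sum_m D_{im}(y_\eta)_{mj}$. The same happens in the $\eta$-direction, which produces $\psi^f_{ij}\sum_n D_{jn}(-y_\xi)_{in}$. Combined, these interior terms are $\psi^f_{ij}\bigl(\partial_\xi y_\eta-\partial_\eta y_\xi\bigr)_{ij}$ at the discrete level, and analogously for $\psi^g$. They cancel by the discrete metric identities, which hold because the isoparametric mapping makes $D$ applied to the polynomial metrics exact and commuting. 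This is the only place the geometry enters, so I will verify it carefully there.

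What remains on each face is $\bm V\cdot\widetilde{\bm f}^{\,e}-\tilde{\bm\psi}^{\,e,f}$ from the volume terms, together with $-\bm V\cdot\Delta\widetilde{\bm f}^{\,e}$ from the surface terms. Their sum is $\bm V\cdot\widetilde{\bm f}^{\,e,*}-\tilde{\bm\psi}^{\,e,f}$ with the appropriate signs at $i=0$ and $i=N$. Collecting the four faces gives \eqref{eq_ss}.
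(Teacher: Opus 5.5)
Your proposal is correct and is the standard derivation that the paper defers to \cite{YangFu26} rather than reproducing. Part (i) uses symmetric, consistent two-point fluxes together with $S+S^T=\mathrm{diag}(-1,0,\dots,0,1)$. Part (ii) splits $2S$ into symmetric and skew parts, applies Tadmor's condition, and uses summation by parts on the metric averages, with the leftover volume terms removed by the discrete metric identities $\sum_m D_{im}(y_\eta)_{mj}=\sum_n D_{jn}(y_\xi)_{in}$ and $\sum_m D_{im}(x_\eta)_{mj}=\sum_n D_{jn}(x_\xi)_{in}$. You correctly single these identities out as the one geometric ingredient. They hold because the metrics are obtained by differentiating the isoparametric interpolant, so that $D\otimes I$ and $I\otimes D$ commute. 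The paper's Remark~\ref{rk1} leaves this dependence implicit.
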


\begin{remark}
\label{rk1}
The above analysis is purely elementwise and is independent of the specific choice of interface numerical fluxes. In particular, it relies only on the summation-by-parts property and the entropy-conservative structure of the volume fluxes $\bm F^\#$.

As a consequence, these properties extend naturally to nonconforming AMR meshes. The key remaining challenge is the construction of interface numerical fluxes that ensure global conservation and entropy stability; see, e.g., \cite{FriedrichEtAl2018,DelReyFernandezEtAl2020, ChanEtAl2021}. This issue is addressed in the next section.
\end{remark}

\section{Positivity-preserving and entropy-stable DGSEM on nonconforming meshes}
\label{sec:amr}

In this section, we extend the entropy-stable DGSEM to nonconforming curvilinear meshes. Our goal is to construct numerical fluxes for nonconforming interfaces that preserve both entropy stability and positivity. However, this comes with an important limitation: for polynomial degree \(N\ge 2\), the resulting flux is no longer formally high-order accurate, due to sign changes in the nonconforming interpolation operators. This limitation will motivate the alternative flux introduced in the next section, which recovers high-order accuracy and positivity preservation, but no longer satisfies entropy stability.

As discussed in Remark~\ref{rk1}, the volume discretization and its associated elementwise conservation and entropy properties remain unchanged. Consequently, the extension reduces to the construction of appropriate numerical fluxes at nonconforming interfaces.

We restrict attention to two-dimensional quadrilateral meshes under a \(2{:}1\) refinement constraint, allowing a single level of hanging nodes. It is sufficient to consider a representative nonconforming interface configuration consisting of two fine elements adjacent to one coarse element, as illustrated in Figure~\ref{fig:nc_interface}. The goal is to construct numerical fluxes on this interface that preserve the conservation, entropy stability, and positivity properties of the underlying DGSEM.

\begin{figure}[t]
\centering
\begin{tikzpicture}[scale=1.0,line cap=round,line join=round]

\coordinate (A) at (0.0,0.0);
\coordinate (B) at (0.2,2.9);

\coordinate (D) at (4.2,-0.02);
\coordinate (C) at (4.55,2.95);

\coordinate (F) at (7.7,-0.28);
\coordinate (E) at (8.9,2.65);

\draw[thick] (A) .. controls (0.8,0.1) and (2.0,0.0) .. (D);
\draw[thick] (D) .. controls (5.5,-0.1) and (6.7,-0.2) .. (F);

\draw[thick] (B) .. controls (1.9,3.15) and (3.5,3.15) .. (C);
\draw[thick] (C) .. controls (6.2,2.8) and (7.4,2.7) .. (E);

\draw[thick] (A) .. controls (0.8,0.9) and (0.9,2.0) .. (B);
\draw[thick] (E) .. controls (9.0,1.7) and (8.8,0.5) .. (F);

\draw[thick] (D) .. controls (4.9,0.7) and (4.9,2.15) .. (C);

\coordinate (M) at (0.68,1.5);
\coordinate (H) at (4.75,1.42);
\draw[thick] (M) .. controls (3.4,1.55) .. (H);

\node at (2.25,2.0) {$\Omega_{F_2}$};
\node at (2.05,0.82) {$\Omega_{F_1}$};
\node at (6.9,1.35) {$\Omega_C$};

\fill[blue] (4.45,2.78) circle (3.2pt);
\fill[blue] (4.58,2.2) circle (3.2pt);
\fill[blue] (4.62,1.60) circle (3.2pt);
\fill[blue] (4.62,1.28) circle (3.2pt);
\fill[blue] (4.47,0.68) circle (3.2pt);
\fill[blue] (4.15,0.14) circle (3.2pt);

\fill[red] (4.80,2.78) circle (3.2pt);
\fill[red] (4.93,1.40) circle (3.2pt);
\fill[red] (4.50,0.14) circle (3.2pt);

\end{tikzpicture}
\caption{Representative nonconforming interface under $2{:}1$ refinement with polynomial degree $N=2$. The two fine elements on the left have three nodal
collocation points on each fine interface (blue), while the coarse element on the right has three nodal collocation points on the coarse interface (red).}
\label{fig:nc_interface}
\end{figure}
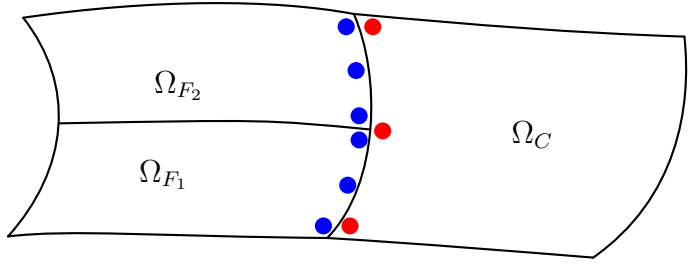

\subsection{Nonconforming interface flux construction}
\label{subsec:nc_flux}

We consider a representative nonconforming interface configuration under a \(2{:}1\) refinement, where two fine elements \(\Omega_{F_1}\) and \(\Omega_{F_2}\) are adjacent to a coarse element \(\Omega_C\); see Figure~\ref{fig:nc_interface}. Let
\[
E_{F_1} = \partial \Omega_{F_1} \cap \partial \Omega_C, 
\qquad
E_{F_2} = \partial \Omega_{F_2} \cap \partial \Omega_C,
\]
denote the two fine edges, and define the corresponding coarse edge
\[
E_C = E_{F_1} \cup E_{F_2}.
\]

We denote by \(\{\xi_i^{F_k}\}_{i=0}^N\) the collocation points on the fine edge \(E_{F_k}\) for \(k=1,2\), and by \(\{\xi_j^C\}_{j=0}^N\) the collocation points on the coarse edge \(E_C\). As shown in Figure~\ref{fig:nc_interface}, these collocation points generally do not coincide in physical space, which prevents the direct application of a standard two-point flux of the form \eqref{lax}.

We now construct the interface fluxes on the nonconforming edges \(E_{F_1}\), \(E_{F_2}\), and \(E_C\). For notational convenience, we denote by \((\bm U^{F_k}_i)_{i=0}^N\) the nodal values of the solution on the fine edge \(E_{F_k}\), and by \((\bm U^C_j)_{j=0}^N\) the nodal values on the coarse edge \(E_C\). Let \(\bm n^{F_k}_i\) denote the outward metric-scaled normal at the fine node \(\xi_i^{F_k}\), and \(\bm n^C_j\) the outward metric-scaled normal at the coarse node \(\xi_j^C\).

To construct the nonconforming interface fluxes, we introduce interpolation and projection operators defined on a reference nonconforming interface. Let
\[
E_C^{\mathrm{ref}}=[-1,1]
\]
denote the reference coarse edge, and split it into the two reference fine edges
\[
E_{F_1}^{\mathrm{ref}}=[-1,0],
\qquad
E_{F_2}^{\mathrm{ref}}=[0,1].
\]
Let \(\{\xi_j\}_{j=0}^N\) and \(\{\phi_j\}_{j=0}^N\) denote the LGL nodes and associated Lagrange basis functions introduced in Section~\ref{subsec:es_dgsem}. The nodal sets on the fine reference edges are obtained by affine mappings:
\[
\xi_i^{\,1}=\frac{\xi_i-1}{2}\in[-1,0],
\qquad
\xi_i^{\,2}=\frac{\xi_i+1}{2}\in[0,1],
\qquad i=0,\dots,N.
\]

For each \(k=1,2\), we define the interpolation matrix from the coarse edge to the fine edge by
\begin{equation}
\label{eq:P_C_to_Fk}
\bigl(P_{C\to F_k}\bigr)_{ij}
=
\phi_j(\xi_i^{\,k}),
\qquad i,j=0,\dots,N.
\end{equation}

The projection operators from the fine edges to the coarse edge are defined through the discrete compatibility condition
\begin{equation}
\label{eq:proj_compat}
P_{C\to F_k}^{T} M_{F_k}
=
M_C P_{F_k\to C},
\qquad k=1,2,
\end{equation}
or equivalently,
\begin{equation}
\label{eq:P_Fk_to_C}
P_{F_k\to C}
=
M_C^{-1} P_{C\to F_k}^{T} M_{F_k},
\qquad k=1,2,
\end{equation}
where \(M_C=\mathrm{diag}(\omega_i)\) and \(M_{F_k} = \frac12 M_C\) are the diagonal mass matrices associated with the LGL quadrature rules on \(E_C^{\mathrm{ref}}\) and \(E_{F_k}^{\mathrm{ref}}\), respectively.

These operators depend only on the reference nonconforming configuration and are independent of the geometry. Their explicit forms for \(N=1,2,3\) are listed in \ref{app:interp_mats}.

Using these operators, we define the numerical fluxes on the fine elements by
\begin{equation}
\label{eq:fine_flux}
\bm {F}^{\,F_k,*}_{i}\cdot\bm n^{F_k}_i
:=
\sum_{j=0}^{N}
\big(P_{C \to F_k}\big)_{ij}
\,
\bm F^\star\big( \bm U^{F_k}_{i}, \bm U^C_{j} \big)\cdot
\bm n_{ij}^{F_k,C},
\qquad k=1,2,
\end{equation}
for all \(0 \le i \le N\), where
\[
\bm n_{ij}^{F_k,C}
:=
\frac{1}{2}\bigl(\bm n^{F_k}_{i}-\tfrac12\,\bm n^{C}_{j}\bigr)
\]
denotes the averaged normal direction between the fine and coarse nodes, oriented from the fine elements \(\Omega_{F_k}\) toward the coarse element \(\Omega_C\). The factor \(\tfrac12\) reflects the metric scaling across the \(2{:}1\) nonconforming interface: the coarse-edge metric-scaled normal has twice the magnitude of the corresponding fine-edge normal.

On the coarse element, the numerical flux is defined by combining contributions from both fine elements:
\begin{equation}
\label{eq:coarse_flux}
\bm {F}^{\,C,*}_{j}\cdot\bm n^{C}_j
:=
-2\sum_{k=1}^2 \sum_{i=0}^{N}
\big(P_{F_k \to C}\big)_{ji}
\,
\bm F^\star\big(\bm U^{F_k}_{i}, \bm U^C_{j}\big)\cdot
\bm n_{ij}^{F_k,C},
\end{equation}
for all \(0 \le j \le N\).

We define \(\bm F^\star\) using the following modified local Lax--Friedrichs flux:
\begin{equation}
\label{lxf}    
\begin{aligned}
\bm F^\star\big(\bm U^{F_k}_{i}, \bm U^C_{j} \big)\cdot \bm n_{ij}^{F_k,C}
:=\;&
\frac12\Bigl(
\bm F(\bm U^{F_k}_{i}) + \bm F(\bm U^C_{j})
\Bigr)\cdot \bm n_{ij}^{F_k,C}
\\
&-
\mathrm{sign}\!\left((P_{C \to F_k})_{ij}\right)
\frac{\alpha_i^{F_k}\|\bm n_i^{F_k}\|}{2}
\bigl(\bm U^C_{j}-\bm U^{F_k}_{i}\bigr),
\end{aligned}
\end{equation}
where \(\alpha_i^{F_k}\) is a suitable estimate of the maximum wave speed similar to \eqref{lax}. The sign factor is the key ingredient that restores the sign-definiteness needed in the entropy and positivity arguments below.

\begin{remark}[Sign changes, stability, and loss of formal high-order consistency]
\label{rem:sign_consistency}
For polynomial degree \(N\ge 2\), the interpolation matrices \(P_{C\to F_k}\) generally contain negative entries. This creates a fundamental tension between high-order consistency and the requirements of entropy stability and positivity preservation.

If all entries of \(P_{C\to F_k}\) were nonnegative, then at a fine node \(\xi_i^{F_k}\) the dissipative jump would satisfy
\[
\sum_{j=0}^N (P_{C\to F_k})_{ij}\,\bm U_j^C - \bm U_i^{F_k}
=
\bm U^C(\xi_i^{\,k})-\bm U_i^{F_k}
=
\mathcal O(h^{N+1}),
\]
since it compares the coarse and fine polynomial traces at the same physical location. In this case, the added dissipation is formally high-order and does not affect the design accuracy.

However, due to the presence of negative entries, such a form cannot be used to establish entropy stability or positivity preservation. Indeed, the stability arguments require the dissipation to appear with nonnegative coefficients, which is achieved in \eqref{lxf} through the use of the sign function, leading effectively to terms of the form
\[
\sum_{j=0}^N \bigl|(P_{C\to F_k})_{ij}\bigr|\,\left(\bm U_j^C - \bm U_i^{F_k}\right).
\]
This modification destroys the interpolation property, since
\[
\sum_{j=0}^N \bigl|(P_{C\to F_k})_{ij}\bigr|\,\bm U_j^C
\neq
\bm U^C(\xi_i^{\,k}),
\]
and hence the cancellation underlying the \(\mathcal O(h^{N+1})\) consistency is lost. As a result, the added dissipation introduces, in general, only a first-order consistent error. Therefore, for \(N\ge 2\), the numerical flux \eqref{lxf} should be regarded as at most first-order accurate in a formal truncation-error sense.

In summary, there is an inherent trade-off: without the sign correction, the flux remains formally high-order accurate but lacks provable entropy stability and positivity preservation; with the sign correction, these stability properties are recovered, but at the cost of a reduction in formal accuracy.

In contrast, for \(N=1\), the matrices \(P_{C\to F_k}\) are nonnegative, so no such conflict arises. In this case, the dissipative term remains a consistent approximation of the physical jump, and the proposed numerical flux is both entropy stable, positivity preserving, and formally second-order accurate.
\end{remark}
With the nonconforming interface fluxes defined above, the entropy-stable DGSEM on a nonconforming mesh \(\{\Omega_e\}_{e=1}^{N_e}\) is given by \eqref{eq:es_dgsem}. On each element \(\Omega_e\), the volume terms are unchanged, while the interface numerical fluxes on \(\partial\Omega_e\) are specified as follows: on conforming interfaces, the flux \eqref{conforming_flux} with the local Lax--Friedrichs two-point flux \eqref{lax} is used; on nonconforming interfaces, the flux \eqref{eq:fine_flux} or \eqref{eq:coarse_flux}, together with the modified local Lax--Friedrichs two-point flux \eqref{lxf}, is employed depending on whether the edge corresponds to a fine or coarse side.

Next, we state the conservation and entropy properties of the proposed scheme. The proof is given in \ref{app:proof_global_es}.
The analysis follows the general framework developed in \cite{FriedrichEtAl2018,DelReyFernandezEtAl2020,ChanEtAl2021}, adapted to the present setting and notation.

We make the following assumptions on the nonconforming mesh. First, the mesh is \emph{watertight}, meaning that the unit normal vectors are equal and opposite across each shared interface between neighboring elements when evaluated at the same physical location. Second, the discretization is isoparametric, namely
\[
(x^e,y^e)\in \mathbb Q_N(\Omega^{\mathrm{ref}})
\qquad\text{for each element }\Omega_e.
\]
Under this assumption, the scaled normal vector restricted to any edge \(E\subset\partial\Omega_e\) is a polynomial of degree at most \(N-1\), i.e.,
\[
\bm n \in \mathbb Q_{N-1}(E^{\mathrm{ref}}),
\qquad
E^{\mathrm{ref}}=[-1,1].
\]
Consequently, the interpolation operator reproduces the scaled normal exactly on each fine edge, and we obtain the geometric identity
\begin{align}
\label{nrm1}
\sum_{j=0}^N (P_{C\to F_k})_{ij}\,\bm n_j^C
=
\bm n^C(\xi_i^{\,k})
=
-2\bm n_i^{F_k},
\qquad k=1,2.
\end{align}
Moreover, the partition-of-unity property of the Lagrange basis implies
\begin{align}
\label{nrm2}
\sum_{j=0}^N (P_{C\to F_k})_{ij}=1.
\end{align}

\begin{theorem}[Global conservation and entropy stability]
\label{thm:global_es_nc}
Consider the Euler system \eqref{eq:euler} with periodic boundary conditions, and let \(\{\bm U_N^e\}_e\) denote the solution of the entropy-stable DGSEM \eqref{eq:es_dgsem}, with interface fluxes \eqref{conforming_flux} on conforming interfaces and \eqref{eq:fine_flux}--\eqref{eq:coarse_flux} on nonconforming interfaces.

Assume that the numerical fluxes \eqref{lax} and \eqref{lxf} are entropy stable in the following sense:

(i) On conforming interfaces,
\begin{equation}
\label{eq:esf_conf}
(\bm V_R-\bm V_L)\cdot\bigl(\bm F^*(\bm U_L,\bm U_R)\cdot\bm n\bigr)
-
(\bm \psi_R-\bm \psi_L)\cdot\bm n
\le 0,
\end{equation}
where \(\bm n\) is the metric-scaled normal vector oriented from the left state to the right state.

(ii) On nonconforming interfaces, for all \(0\le i,j\le N\) and \(k=1,2\),
\begin{equation}
\label{eq:esf_nc}
(P_{C\to F_k})_{ij}
\left[
(\bm V_i^{F_k}-\bm V_j^C)\cdot
\bigl(\bm F^\star(\bm U_i^{F_k},\bm U_j^C)\cdot\bm n_{ij}^{F_k,C}\bigr)
-
(\bm \psi_i^{F_k}-\bm \psi_j^C)\cdot\bm n_{ij}^{F_k,C}
\right]
\le 0.
\end{equation}

Then the scheme is globally conservative and satisfies the semi-discrete entropy inequality
\begin{equation}
\label{eq:global_cons}
\sum_{e=1}^{N_e}\sum_{i,j=0}^N
w_i w_j\,\mathcal J_{ij}^e\,\partial_t \bm U_{ij}^e
=
\bm 0,
\end{equation}
and
\begin{equation}
\label{eq:global_entropy}
\sum_{e=1}^{N_e}\sum_{i,j=0}^N
w_i w_j\,\mathcal J_{ij}^e\,\partial_t \eta(\bm U_{ij}^e)
\le 0.
\end{equation}
\end{theorem}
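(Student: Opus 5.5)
The plan is to sum the single-element identities of Lemma~\ref{lma:1} over all elements, so that only interface contributions remain, and then treat each interface separately. Periodic boundary conditions mean every edge is shared by two elements. For conforming interfaces I will reuse the argument of \cite{YangFu26}. There the two face contributions are rewritten in physical normal form via \eqref{conforming_flux}, with watertightness ensuring equal and opposite normals. In the conservation sum the fluxes then cancel pointwise. In the entropy sum each face node contributes
\[
(\bm V_L-\bm V_R)\cdot(\bm F^*\cdot\bm n)-(\bm\psi_L-\bm\psi_R)\cdot\bm n,
\]
which is $\ge 0$ by \eqref{eq:esf_conf}; the sign convention of \eqref{eq_ss} turns this into a nonpositive contribution to $\partial_t\sum\eta$. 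The real work is the nonconforming configuration of Figure~\ref{fig:nc_interface}. There I write the edge terms of Lemma~\ref{lma:1} for $\Omega_{F_1},\Omega_{F_2},\Omega_C$ as quadrature sums with weights: $\tfrac12 w_i$ in reference coarse-edge scaling on each fine edge (i.e.\ $M_{F_k}$), and $w_j$ ($M_C$) on the coarse edge. Each contribution has the form (outward flux)$\cdot\bm V$ minus the potential flux $\bm\psi\cdot\bm n$.

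\textbf{Conservation.} The total interface flux leaving the fine side is $\sum_k\sum_i (M_{F_k})_{ii}\,\bm F^{F_k,*}_i\cdot\bm n^{F_k}_i$. Inserting \eqref{eq:fine_flux} gives
\[
\sum_k\sum_{i,j}(M_{F_k})_{ii}(P_{C\to F_k})_{ij}\,\bm F^\star_{ij}\cdot\bm n^{F_k,C}_{ij}.
\]
The coarse-side sum $\sum_j (M_C)_{jj}\bm F^{C,*}_j\cdot\bm n^C_j$ with \eqref{eq:coarse_flux} yields, by the compatibility relation \eqref{eq:proj_compat} (i.e.\ $M_C P_{F_k\to C}=P_{C\to F_k}^TM_{F_k}$), exactly the same double sum with the opposite sign. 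I still need to check that the factor $2$ in \eqref{eq:coarse_flux} matches the scaling convention $M_{F_k}=\tfrac12M_C$ used for the fine face weights. So the two sides cancel, which gives \eqref{eq:global_cons}.

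\textbf{Entropy.} The same manipulation turns the coarse $\bm V$-flux terms into
\[
-\sum_k\sum_{i,j}(M_{F_k})_{ii}(P_{C\to F_k})_{ij}\,\bm V^C_j\cdot(\bm F^\star_{ij}\cdot\bm n_{ij}^{F_k,C}).
\]
Combined with the fine terms this gives $\sum (M_{F_k})_{ii}(P_{C\to F_k})_{ij}(\bm V_i^{F_k}-\bm V_j^C)\cdot(\bm F^\star\cdot\bm n_{ij})$, up to the global sign dictated by \eqref{eq_ss}. The main obstacle is to show that the entropy-potential terms $-\sum_i(M_{F_k})_{ii}\bm\psi^{F_k}_i\cdot\bm n_i^{F_k}-\sum_j(M_C)_{jj}\bm\psi^C_j\cdot\bm n^C_j$ equal $\sum(M_{F_k})_{ii}(P_{C\to F_k})_{ij}(\bm\psi_i^{F_k}-\bm\psi_j^C)\cdot\bm n_{ij}^{F_k,C}$. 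I will expand $\bm n_{ij}^{F_k,C}=\tfrac12(\bm n_i^{F_k}-\tfrac12\bm n_j^C)$ into four pieces and simplify each:
\begin{itemize}
\item $\sum_j P_{ij}\bm\psi_i\cdot\bm n_i$ collapses by the partition of unity \eqref{nrm2};
\item $\sum_jP_{ij}\bm n^C_j=-2\bm n_i^{F_k}$ by \eqref{nrm1};
\item the terms $\sum_i (M_{F_k})_{ii}P_{ij}\bm\psi^C_j\cdot\bm n^{F_k}_i$ and $\sum_i (M_{F_k})_{ii}P_{ij}\bm\psi_j^C\cdot \bm n^C_j$ reduce via $\sum_k P_{C\to F_k}^TM_{F_k}\mathbf 1=M_C\mathbf 1$ (from \eqref{eq:proj_compat} and $\sum_k P_{F_k\to C}\mathbf 1=\mathbf 1$). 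Where needed, I will also use exactness of the split quadrature for $\bm n^C\in\mathbb Q_{N-1}$ times interpolants.
\end{itemize}
Tracking the factors $\tfrac12$ carefully, the two halves should reconstitute $-\bm\psi^{F_k}_i\cdot\bm n_i^{F_k}$ and $-\bm\psi^C_j\cdot\bm n^C_j$. This bookkeeping is where I expect the difficulty, particularly verifying that the coarse-side reduction holds with nodal (not exact) quadrature.

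Once that identity is established, each nonconforming edge contributes the sum of $(M_{F_k})_{ii}\ge 0$ times the bracket in \eqref{eq:esf_nc}, which is $\le 0$ by assumption. Adding the conforming contributions yields \eqref{eq:global_entropy}. The role of the sign factor in \eqref{lxf} enters only through assumption (ii): it makes $(P_{C\to F_k})_{ij}$ times the dissipative part nonpositive, which is why (ii) is plausible. Verifying (ii) itself is not part of this proof.
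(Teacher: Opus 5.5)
You follow the paper's route: sum Lemma~\ref{lma:1}, cancel conforming faces pairwise, use the compatibility relation on a hanging face to turn the coarse-side sums into minus a fine-side double sum, and recast the $\bm\psi$-terms with \eqref{nrm1}--\eqref{nrm2}. However, the factor-$2$ check you postponed fails with your weights. On a fine element, Lemma~\ref{lma:1} carries that element's own LGL weights $w_i$ together with its own metric normal $\bm n_i^{F_k}$. By \eqref{nrm1} that normal already equals $-\tfrac12\bm n^C(\xi_i^{\,k})$, so pairing $(M_{F_k})_{ii}=\tfrac12 w_i$ with $\bm n_i^{F_k}$ counts the halving twice. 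With your weights the fine side is $\sum_{k,i,j}\tfrac12 w_i(P_{C\to F_k})_{ij}\,\bm F^\star_{ij}\cdot\bm n^{F_k,C}_{ij}$, where $\bm F^\star_{ij}=\bm F^\star(\bm U_i^{F_k},\bm U_j^C)$. By \eqref{eq:coarse_flux} and \eqref{eq:proj_compat}, the coarse side is $-2\sum_{k,i,j}w_j(P_{F_k\to C})_{ji}\,\bm F^\star_{ij}\cdot\bm n^{F_k,C}_{ij}=-\sum_{k,i,j}w_i(P_{C\to F_k})_{ij}\,\bm F^\star_{ij}\cdot\bm n^{F_k,C}_{ij}$, so the two do not cancel. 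In your entropy paragraph the coarse $\bm V$-terms only appear to match because this $2$ was dropped. With weights $w_i$, the relation $w_i(P_{C\to F_k})_{ij}=2w_j(P_{F_k\to C})_{ji}$ shows the $-2$ in \eqref{eq:coarse_flux} is exactly right; this is the paper's computation. Your worry about nodal quadrature in the $\bm\psi$-terms resolves favorably. The column identities $\sum_{k,i}w_i(P_{C\to F_k})_{ij}=2w_j$ and $\sum_{k,i}w_i(P_{C\to F_k})_{ij}\,\bm n_i^{F_k}=-w_j\bm n_j^C$ hold because $\phi_j$ and $\phi_j\bm n^C$ have degree at most $2N-1$. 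Such polynomials are integrated exactly by each half-edge LGL rule and by the coarse rule. These identities do not follow from the row sums \eqref{nrm2} alone, so this exactness is what actually underlies \eqref{eq:proj_sum_one}--\eqref{eq:proj_sum_normal}.

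The ``global sign dictated by \eqref{eq_ss}'' cannot be left open, because that is where the conclusion can flip. As you correctly used on conforming faces, the entropy rate receives \emph{minus} the outward-form face terms $\sum w\,[(\bm F^*\cdot\bm n_{\mathrm{out}})\cdot\bm V-\bm\psi\cdot\bm n_{\mathrm{out}}]$. On a hanging face, with correct weights, this outward-form sum equals $\sum_{k,i,j}w_i(P_{C\to F_k})_{ij}\bigl[(\bm V_i^{F_k}-\bm V_j^C)\cdot(\bm F^\star_{ij}\cdot\bm n_{ij}^{F_k,C})-(\bm\psi_i^{F_k}-\bm\psi_j^C)\cdot\bm n_{ij}^{F_k,C}\bigr]$. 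This uses $\sum_{k,i}w_i\bm\psi_i^{F_k}\cdot\bm n_i^{F_k}+\sum_j w_j\bm\psi_j^C\cdot\bm n_j^C=\sum_{k,i,j}w_i(P_{C\to F_k})_{ij}(\bm\psi_i^{F_k}-\bm\psi_j^C)\cdot\bm n_{ij}^{F_k,C}$; your stated target has its left side with the opposite sign. You therefore need this sum to be nonnegative. That means hypothesis (ii) must take the orientation of \eqref{eq:esf_conf} with fine as $L$ and coarse as $R$, since $\bm n_{ij}^{F_k,C}$ points from fine to coarse: $(P_{C\to F_k})_{ij}\bigl[(\bm V_j^C-\bm V_i^{F_k})\cdot(\bm F^\star_{ij}\cdot\bm n_{ij}^{F_k,C})-(\bm\psi_j^C-\bm\psi_i^{F_k})\cdot\bm n_{ij}^{F_k,C}\bigr]\le 0$. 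Your closing plausibility remark is true only in this orientation. The dissipative part of \eqref{lxf} contributes $-|(P_{C\to F_k})_{ij}|\,\tfrac12\alpha_i^{F_k}\|\bm n_i^{F_k}\|\,(\bm V_j^C-\bm V_i^{F_k})\cdot(\bm U_j^C-\bm U_i^{F_k})\le 0$ to this bracket, and the opposite sign to the bracket of \eqref{eq:esf_nc} as written. Reading \eqref{eq:esf_nc} literally, as your last step does, gives the wrong inequality. The paper's final step (``$\mathrm{NET}_{\mathrm{ent}}\le 0$'') contains the same slip, since its $\mathrm{NET}_{\mathrm{ent}}$ enters the entropy rate with a minus sign. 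Once the weights and this orientation are fixed, your argument coincides with the paper's.
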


\begin{proof}
See \ref{app:proof_global_es}.
\end{proof}

\subsection{Positivity preservation}
\label{subsec:pp}

We now establish positivity preservation for the proposed scheme on nonconforming meshes. The goal is to prove that, under forward Euler time stepping and a suitable CFL condition, the cell-average state on each element remains physically admissible, i.e., has positive density and pressure. This, in turn, allows the Zhang--Shu scaling limiter \cite{zhang2010positivity} to be applied, yielding positivity of both density and pressure at all nodal solution points for the fully discrete scheme.

For each element \(\Omega_e\), define the cell average
\begin{equation}
\label{eq:cell_average}
\overline{\bm U}^e
:=
\frac{1}{|\Omega_e|}
\sum_{i,j=0}^N w_i w_j \mathcal J_{ij}^e \bm U_{ij}^{e},
\qquad
|\Omega_e|
:=
\sum_{i,j=0}^N w_i w_j \mathcal J_{ij}^e .
\end{equation}

Using the single-element conservation identity \eqref{eq_cc}, the forward Euler update may be written in the compact form
\begin{equation}
\label{eq:compact_FE}
\overline{\bm U}^{e,\,n+1}
=
\overline{\bm U}^{e,\,n}
-
\frac{\Delta t}{|\Omega_e|}
\sum_{E\subset\partial\Omega_e}\sum_{r=0}^N w_r
\bigl(\bm F_r^{E,*,n}\cdot \bm n_r^E\bigr),
\end{equation}
where the sum ranges over all edges \(E\) of \(\Omega_e\), and \(\bm n_r^E\) denotes the metric-scaled outward normal at the quadrature node \(r\) on \(E\). Depending on the type of edge \(E\), the numerical flux \(\bm F_r^{E,*,n}\cdot \bm n_r^E\) is defined as follows: for conforming interfaces it is given by the local Lax--Friedrichs flux \eqref{lax}; for fine nonconforming interfaces it is given by the projected flux \eqref{eq:fine_flux}; and for coarse nonconforming interfaces it is given by \eqref{eq:coarse_flux}.

We recall the admissible set for the Euler equations,
\begin{equation}
\label{eq:admissible_set}
\mathcal G
:=
\left\{
\bm U=
\begin{pmatrix}
\rho\\
\rho u\\
\rho v\\
E
\end{pmatrix}
:\;
\begin{aligned}
&\rho>0,\\
&p(\bm U)
=
(\gamma-1)\left(E-\frac{(\rho u)^2+(\rho v)^2}{2\rho}\right)>0
\end{aligned}
\right\}.
\end{equation}
The set \(\mathcal G\) is convex and positively homogeneous, i.e.,
\[
\bm U\in\mathcal G
\quad\Longrightarrow\quad
\lambda \bm U\in\mathcal G
\qquad
\forall\,\lambda>0.
\]

We also use the standard admissibility property associated with the local Lax--Friedrichs flux.

\begin{lemma}
\label{lem:LF_admissible}
Let \(\bm U\in\mathcal G\), let \(\bm \nu_u\in\mathbb R^2\) be any unit vector, and let
\[
\alpha_0 \ge |\bm u\cdot \bm \nu_u| + c,
\]
where \(c=\sqrt{\gamma p/\rho}\) denotes the sound speed. Then
\begin{equation}
\label{eq:LF_admissible}
\bm U \pm \frac{1}{\alpha_0}\bm F(\bm U)\cdot \bm \nu_u \in \mathcal G.
\end{equation}
\end{lemma}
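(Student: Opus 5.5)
The plan is to reduce the claim to the one-dimensional Riemann-problem structure of the Euler equations, following the standard argument of Zhang and Shu. By rotational invariance we may take \(\bm\nu_u=(1,0)^T\), since the Euler system is invariant under rotations of the momentum components and \(\mathcal G\) is rotation invariant. It then suffices to show that \(\bm U\pm\frac{1}{\alpha_0}\bm f(\bm U)\in\mathcal G\) whenever \(\alpha_0\ge |u|+c\). Because \(\mathcal G\) is a convex cone, this is equivalent to \(\alpha_0\bm U\pm\bm f(\bm U)\in\mathcal G\). We treat the sign \(+\); the sign \(-\) follows by the reflection \(u\mapsto -u\).

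\textbf{Density.} The first component is \(\rho(\alpha_0\pm u)\). This is positive because \(\alpha_0\ge |u|+c>|u|\).

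\textbf{Pressure.} Write \(\bm W=\alpha_0\bm U+\bm f(\bm U)\) and set \(a=\alpha_0+u>0\). Its components are
\[
W_1=a\rho,\qquad W_2=a\rho u+p,\qquad W_3=a\rho v,\qquad W_4=aE+pu .
\]
Pressure positivity is equivalent to \(2W_1W_4-W_2^2-W_3^2>0\). Expand using \(E=\frac{p}{\gamma-1}+\frac12\rho(u^2+v^2)\). The kinetic terms \(a^2\rho^2(u^2+v^2)\) cancel, as do the cross terms \(2a\rho pu\). What remains is
\[
2W_1W_4-W_2^2-W_3^2=\frac{2a^2\rho p}{\gamma-1}-p^2 .
\]
This is positive if and only if \(a^2>\frac{(\gamma-1)p}{2\rho}\), that is, \(\alpha_0+u>\sqrt{\frac{\gamma-1}{2\gamma}}\,c\).

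Since \(\alpha_0+u\ge |u|+c+u\ge c\) and \(\frac{\gamma-1}{2\gamma}<1\), the inequality holds, with strict positivity because \(c>0\). The sign \(-\) is identical with \(a=\alpha_0-u\ge c\).

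The only step needing care is the algebraic cancellation in the pressure expression, which I expect to be routine. The key point to verify is that the vertical momentum \(v\) drops out entirely. It does, since \(W_3^2=a^2\rho^2v^2\) exactly matches the \(v^2\) contribution of \(2W_1\cdot aE\).
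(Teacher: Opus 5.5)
Your proof is correct, and it differs from the paper mainly in that the paper gives no argument of its own: it invokes the ``standard'' Lax--Friedrichs admissibility property and cites Zhang--Shu. You supply the verification directly. You use rotational invariance to reduce to the $x$-flux and positive homogeneity of $\mathcal G$ to clear the factor $1/\alpha_0$. The identity
\[
2W_1W_4-W_2^2-W_3^2=\frac{2a^2\rho p}{\gamma-1}-p^2,\qquad a=\alpha_0\pm u,
\]
then does the work, and its cancellations check out for either sign (the $v^2$ terms and the cross terms $\pm 2a\rho p u$). Besides being self-contained, your route shows that the hypothesis has slack: admissibility actually holds whenever $\alpha_0\pm\bm u\cdot\bm\nu_u>\sqrt{(\gamma-1)/(2\gamma)}\,c$. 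The rotation step is also dispensable. For a general unit vector $\bm\nu_u$, the momentum part of $\alpha_0\bm U\pm\bm F(\bm U)\cdot\bm\nu_u$ is $a\rho\bm u\pm p\,\bm\nu_u$ with $a=\alpha_0\pm\bm u\cdot\bm\nu_u$, and the same cancellation leaves $\frac{2a^2\rho p}{\gamma-1}-p^2|\bm\nu_u|^2$.
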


\begin{proof}
This is the standard Lax--Friedrichs admissibility property used in positivity analyses for the Euler equations; see, e.g., \cite[Remark 2.4]{zhang2010positivity}.
\end{proof}

We can now state the cell-average positivity result. Its proof is deferred to \ref{app:proof_pp}.

\begin{theorem}[Positivity of the cell average]
\label{thm:cell_average_pp}
Assume that, at time level \(t^n\), all nodal states are admissible:
\[
\bm U_{ij}^{e,n}\in\mathcal G,
\qquad
0\le i,j\le N,
\qquad
\forall\,\Omega_e\in\Omega_h.
\]
Then the forward Euler update preserves positivity of the cell average,
\[
\overline{\bm U}^{e,\,n+1}\in\mathcal G,
\qquad
\forall\,\Omega_e\in\Omega_h,
\]
provided that, at every boundary node of each element, the total dissipative contribution from all incident edges does not exceed the corresponding cell-average weight. More precisely, for every boundary node \((m,n)\in\mathcal B_e\),
\begin{equation}
\label{eq:CFL_pp}
\Delta t \left(
\sum_{E\ni(m,n)} \beta_r^E
+\sum_{k:\,E_{F_k}\ni(m,n)} \beta_i^{F_k}
+\sum_{E_C\ni(m,n)} \beta_j^C
\right)
\le
w_m w_n \mathcal J_{mn}^e,
\end{equation}
where \(\beta_r^E\), \(\beta_i^{F_k}\), and \(\beta_j^C\) are the nonnegative edgewise dissipation coefficients defined in \ref{app:proof_pp}.
\end{theorem}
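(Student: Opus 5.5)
We follow the Zhang--Shu strategy: write the updated cell average as a convex combination of admissible states. The starting point is the compact forward Euler update \eqref{eq:compact_FE}. We multiply by \(|\Omega_e|\) and write \(|\Omega_e|\overline{\bm U}^{e,n}=\sum_{i,j} w_iw_j\mathcal J^e_{ij}\bm U^{e,n}_{ij}\). The interior nodes contribute admissible states with positive weights, so the work lies in the boundary nodes.

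The key algebraic step is to subtract zero from each edge sum so that every flux term becomes a sum of differences. Because the mesh is closed, watertight and isoparametric, the discrete geometric identity gives \(\sum_{E\subset\partial\Omega_e}\sum_r w_r \bm F(\bm U^e_{\cdot})\cdot\bm n^E_r\)-type cancellations. More precisely, for the constant state the metric-scaled normals sum to zero: \(\sum_E\sum_r w_r\bm n_r^E=\bm 0\). We will use \eqref{nrm1}--\eqref{nrm2} to rewrite the nonconforming terms. By \eqref{nrm2}, the fine flux \eqref{eq:fine_flux} expands as \(\sum_j (P_{C\to F_k})_{ij}[\tfrac12\bm F(\bm U^{F_k}_i)\cdot\bm n_{ij}^{F_k,C}+\tfrac12\bm F(\bm U^C_j)\cdot\bm n_{ij}^{F_k,C}-\mathrm{sign}(\cdot)\tfrac{\alpha\|\bm n\|}{2}(\bm U^C_j-\bm U^{F_k}_i)]\). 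By \eqref{nrm1}, the self-term \(\sum_j (P)_{ij}\tfrac12\bm F(\bm U^{F_k}_i)\cdot\bm n_{ij}^{F_k,C}\) equals \(\tfrac12\bm F(\bm U_i^{F_k})\cdot\bm n_i^{F_k}\), which is the same as in the conforming case. The product \((P)_{ij}\mathrm{sign}((P)_{ij})=|(P)_{ij}|\) turns the dissipation into a nonnegative-weighted combination. The coarse side \eqref{eq:coarse_flux} is treated the same way, using \eqref{eq:P_Fk_to_C}. Its entries are \((P_{F_k\to C})_{ji}=\tfrac12 w_i(P_{C\to F_k})_{ij}/w_j\), so the sign factor again yields \(|(P)_{ij}|\).

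After this rearrangement, every boundary node \((m,n)\) of \(\Omega_e\) is associated with terms of the form
\[
c\,\Bigl(\bm U^{e}_{mn}\pm\tfrac{1}{\alpha}\bm F(\bm U^e_{mn})\cdot\bm\nu_u\Bigr)
\quad\text{and}\quad
c\,\Bigl(\bm U^{\mathrm{nbr}}\pm\tfrac{1}{\alpha}\bm F(\bm U^{\mathrm{nbr}})\cdot\bm\nu_u\Bigr),
\]
with nonnegative coefficients \(c\) proportional to \(\Delta t\,w_r|(P)_{ij}|\alpha\|\bm n\|/2\). These coefficients define the edgewise dissipation coefficients \(\beta^E_r,\beta^{F_k}_i,\beta^C_j\). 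Each such term lies in \(\mathcal G\) by Lemma~\ref{lem:LF_admissible}, provided \(\alpha_i^{F_k}\) dominates the wave speeds in the direction of \(\bm n_{ij}^{F_k,C}\) scaled relative to \(\|\bm n_i^{F_k}\|\). Since \(\|\bm n_{ij}^{F_k,C}\|\) may differ from \(\|\bm n_i^{F_k}\|\), we choose \(\alpha\) with the ratio \(\|\bm n_{ij}^{F_k,C}\|/\|\bm n_i^{F_k}\|\) absorbed into it. The node's own remaining weight is \(w_mw_n\mathcal J^e_{mn}-\Delta t\sum\beta\), which is nonnegative by the CFL condition \eqref{eq:CFL_pp}. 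The total is then a nonnegative combination of admissible states with positive total weight \(|\Omega_e|\). Convexity and positive homogeneity of \(\mathcal G\) give \(\overline{\bm U}^{e,n+1}\in\mathcal G\).

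The main obstacle is the consistency rewrite of the nonconforming central parts. The neighbor flux terms \(\tfrac12\bm F(\bm U_j^C)\cdot\bm n^{F_k,C}_{ij}\) carry signed weights \((P)_{ij}\) with mismatched normals, while the dissipation carries \(|(P)_{ij}|\). To pair each signed flux term with an admissible Lax--Friedrichs state, we will group them as \(|(P)_{ij}|\bigl(\bm U^C_j-\mathrm{sign}((P)_{ij})\tfrac{1}{\alpha}\bm F(\bm U^C_j)\cdot\bm n^{F_k,C}_{ij}/\|\bm n\|\bigr)\). Lemma~\ref{lem:LF_admissible} permits either sign, so this grouping is admissible. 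The leftover \(-\sum_j|(P)_{ij}|\bm U_i^{F_k}\) must be absorbed by the node's cell weight, which explains why the CFL involves \(\sum|P|\) rather than \(1\).
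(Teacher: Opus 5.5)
Your proposal is correct and follows essentially the same route as the paper. Both arguments split each edge flux, Lax--Friedrichs style, into admissible states with nonnegative weights $|(P_{C\to F_k})_{ij}|$, obtained from the sign factor in \eqref{lxf} and, on the coarse side, from the compatibility relation, and both absorb the leftover $-\beta\,\bm U$ terms into the nodal weights $w_mw_n\mathcal J^e_{mn}$ under \eqref{eq:CFL_pp}. Your opening ``subtract zero'' step via $\sum_E\sum_r w_r\bm n_r^E=\bm 0$ is never used and can be dropped; on the other hand, the $\mathrm{sign}((P_{C\to F_k})_{ij})$ you place inside the neighbor states and the wave-speed requirement involving $\|\bm n_{ij}^{F_k,C}\|/\|\bm n_i^{F_k}\|$ are genuinely needed for the splitting to reproduce the flux and to invoke Lemma~\ref{lem:LF_admissible}, and the paper's displayed decompositions leave both implicit.
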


\begin{proof}
See \ref{app:proof_pp}.
\end{proof}

Theorem~\ref{thm:cell_average_pp} guarantees positivity of the cell-average density and pressure under a forward Euler step. The standard Zhang--Shu positivity-preserving scaling limiter can then be applied elementwise to enforce positivity at all nodal solution points while preserving the elementwise cell average. Consequently, the fully discrete forward Euler scheme equipped with this limiter is conservative and  positivity preserving.

Moreover, since the admissible set \(\mathcal G\) is convex, any strong-stability-preserving Runge--Kutta method that can be written as a convex combination of forward Euler steps inherits the same positivity property. In particular, the SSP-RK3 scheme used in this work preserves positivity of both density and pressure, provided that each forward Euler stage satisfies \eqref{eq:CFL_pp}.

\section{Positivity-preserving DGSEM on nonconforming meshes}
\label{sec:pp_nc}

In this section, we develop an alternative nonconforming interface treatment that preserves positivity and high-order accuracy, but does not enforce entropy stability. The construction follows a classical mortar-type approach based on two-point flux evaluations; see, e.g., \cite{kopriva2002electromagnetic}, with the fine edges serving as the mortars.

Instead of using the entropy-stable flux \eqref{eq:fine_flux}--\eqref{eq:coarse_flux}, we interpolate the solution from the coarse edge to the fine-edge nodes and then evaluate a standard two-point local Lax--Friedrichs flux at the fine-edge nodes. This avoids the sign modification introduced in Section~\ref{sec:amr} and thereby retains the formal high-order consistency of the interface dissipation.

\subsection{Mortar-based flux construction}
\label{subsec:mortar_flux}

We consider the same nonconforming interface configuration described in Section~\ref{subsec:nc_flux}, where two fine elements \(\Omega_{F_1}\), \(\Omega_{F_2}\) are adjacent to a coarse element \(\Omega_C\). The interpolation operators \(P_{C\to F_k}\) and \(P_{F_k\to C}\) are defined as in \eqref{eq:P_C_to_Fk}--\eqref{eq:P_Fk_to_C}.

\medskip

\noindent
\textbf{Fine-side flux.}
At each fine-edge node \(\xi_i^{F_k}\), we first interpolate the coarse trace to the fine-edge nodes:
\begin{equation}
\label{eq:coarse_interp}
\widetilde{\bm U}_i^{C,k}
:= 
\sum_{j=0}^N (P_{C\to F_k})_{ij}\,\bm U_j^C.
\end{equation}
Since \(\bm U^C\) is represented in the DG approximation space \(\mathbb P_N\), the interpolation is exact in the reference coordinate, and thus
\[
\widetilde{\bm U}_i^{C,k}
=
\bm U^C(\xi_i^{\,k}).
\]
We then define the numerical flux by a standard two-point local Lax--Friedrichs flux:
\begin{equation}
\label{eq:fine_flux_mortar}
\bm F_i^{F_k,*}\cdot \bm n_i^{F_k}
:=
\bm F^{*}\bigl(\bm U_i^{F_k}, \widetilde{\bm U}_i^{C,k}\bigr)\cdot \bm n_i^{F_k},
\qquad i=0,\dots,N,\quad k=1,2,
\end{equation}
where \(\bm F^{*}\) denotes the local Lax--Friedrichs flux defined in \eqref{lax}.

\medskip

\noindent
\textbf{Coarse-side flux.}
Since the fine edges are chosen as the mortars, the interface flux is evaluated at the fine-edge nodes and then projected back to the coarse trace space. Accordingly, on the coarse element we define
\begin{equation}
\label{eq:coarse_flux_mortar}
\bm F_j^{C,*}\cdot \bm n_j^C
:=
-2\sum_{k=1}^2 \sum_{i=0}^N
(P_{F_k\to C})_{ji}\,
\bm F^{*}\bigl(\bm U_i^{F_k}, \widetilde{\bm U}_i^{C,k}\bigr)\cdot \bm n_i^{F_k},
\; j=0,\dots,N.
\end{equation}
The factor \(2\) accounts for the metric scaling across the nonconforming interface, as in \eqref{eq:coarse_flux}.

\subsection{Discussion of properties}

We summarize the key properties of the proposed mortar-based flux.

\medskip
\noindent
\textbf{Conservation.}
The construction \eqref{eq:fine_flux_mortar}--\eqref{eq:coarse_flux_mortar} is conservative by design. Using the compatibility condition \eqref{eq:proj_compat} together with the partition-of-unity property of the interpolation operator, one can verify that the interface contributions cancel exactly between neighboring elements, yielding global conservation in the sense of \eqref{eq:global_cons}.

\medskip
\noindent
\textbf{High-order consistency.}
Since the interpolated state \(\widetilde{\bm U}_i^{C,k}\) coincides with the coarse trace evaluated at the same reference location, the jump term
\[
\widetilde{\bm U}_i^{C,k} - \bm U_i^{F_k}
\]
represents the mismatch between the two traces at the same physical point. Consequently, the dissipative term in \eqref{eq:fine_flux_mortar} is a high-order approximation of the physical jump, and the overall flux retains the design order of accuracy.

\medskip
\noindent
\textbf{Lack of entropy stability.}
Unlike the construction in Section~\ref{sec:amr}, the present flux does not satisfy a discrete entropy inequality. This is due to the fact that interpolation is carried out at the level of the conserved variables in \eqref{eq:coarse_interp}, rather than at the flux level as in \eqref{lxf}, together with the presence of negative entries in \(P_{C\to F_k}\), which prevent the dissipation from being expressed as a sum of sign-definite contributions required in the entropy analysis.

\medskip
\noindent
\textbf{Positivity preservation.}
Despite the lack of entropy stability, the scheme remains positivity preserving under a suitable CFL condition. The proof follows the same strategy as in Section~\ref{subsec:pp}, relying on a convex decomposition of the forward Euler update.

A key requirement is that the interpolated states \(\widetilde{\bm U}_i^{C,k}\) remain admissible, i.e.,
\begin{equation}
\label{eq:interp_admissible}
\widetilde{\bm U}_i^{C,k} \in \mathcal G,
\qquad \forall\, i=0,\dots,N,\quad k=1,2.
\end{equation}
Under this assumption, the two-point Lax--Friedrichs flux \eqref{eq:fine_flux_mortar} satisfies the admissibility property analogous to Lemma~\ref{lem:LF_admissible}, and the same convexity argument yields positivity of the cell average. 

\begin{theorem}[Positivity preservation]
Assume that all nodal states are admissible and that the interpolated states \eqref{eq:interp_admissible} also belong to \(\mathcal G\). Then, under a suitable CFL condition analogous to \eqref{eq:CFL_pp}, the forward Euler update of the DGSEM \eqref{eq:es_dgsem}, with nonconforming mortar interface fluxes \eqref{eq:fine_flux_mortar}--\eqref{eq:coarse_flux_mortar}, preserves positivity of the cell average. Consequently, the fully discrete scheme equipped with a Zhang--Shu limiter remains positivity preserving.
\end{theorem}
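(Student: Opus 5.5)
The plan is to start from the compact forward Euler update \eqref{eq:compact_FE} for the cell average and to rewrite the right-hand side as a convex combination of states in \(\mathcal G\). The first step is to replace \(\overline{\bm U}^{e,n}\) by its quadrature form \(\frac{1}{|\Omega_e|}\sum_{m,n} w_m w_n \mathcal J^e_{mn}\bm U^e_{mn}\). The interior nodes are left untouched. Each boundary-edge flux sum is then distributed to the boundary nodes of \(\Omega_e\) that it involves. For a conforming edge we use the standard Zhang--Shu splitting: write \(\bm F^*(\bm U_L,\bm U_R)\cdot\bm n\) via \eqref{lax}, and group the terms into \(\frac{\alpha\|\bm n\|}{2}\bigl(\bm U_L - \tfrac1\alpha \bm F(\bm U_L)\cdot\bm n_u\bigr)\) attached to the interior trace node and \(\frac{\alpha\|\bm n\|}{2}\bigl(\bm U_R - \tfrac1\alpha\bm F(\bm U_R)\cdot\bm n_u\bigr)\), which enters with a positive sign. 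Both brackets lie in \(\mathcal G\) by Lemma~\ref{lem:LF_admissible}. This part is identical to the proof of Theorem~\ref{thm:cell_average_pp}, and I will simply invoke it.

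The new work is on the nonconforming edges. \emph{Fine side.} Because \eqref{eq:fine_flux_mortar} is a genuine two-point local Lax--Friedrichs flux evaluated at the single pair \((\bm U_i^{F_k},\widetilde{\bm U}_i^{C,k})\) with the fine normal \(\bm n_i^{F_k}\), the conforming argument applies verbatim. The only change is that the exterior state is \(\widetilde{\bm U}_i^{C,k}\), whose admissibility is exactly assumption \eqref{eq:interp_admissible}. The dissipation coefficient attached to fine boundary node \(i\) is \(\beta_i^{F_k}= w_i\alpha_i\|\bm n_i^{F_k}\|/2\).

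\emph{Coarse side.} Here I sum \eqref{eq:coarse_flux_mortar} against \(w_j\). Using \eqref{eq:P_Fk_to_C} with \(M_{F_k}=\tfrac12 M_C\), one gets \(\sum_j w_j (P_{F_k\to C})_{ji} = \tfrac12 w_i\sum_j (P_{C\to F_k})_{ij} = \tfrac12 w_i\) by \eqref{nrm2}. The total coarse-edge contribution therefore collapses to \(-\sum_k\sum_i w_i \bm F^*(\bm U_i^{F_k},\widetilde{\bm U}_i^{C,k})\cdot\bm n_i^{F_k}\). This is precisely the flux seen from the coarse side at the mortar points, with orientation reversed. Each term then splits as before into an admissible state involving \(\widetilde{\bm U}_i^{C,k}\), with the inward orientation so that Lemma~\ref{lem:LF_admissible} still applies with the \(\pm\) sign, and an admissible state involving the fine neighbor \(\bm U_i^{F_k}\) entering positively.

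The hardest step, which I expect to be the main obstacle, comes next. The coarse-side "own" states are \(\widetilde{\bm U}_i^{C,k}\), not coarse nodal values. They must be absorbed against the cell-average weight, but \(\widetilde{\bm U}_i^{C,k}=\sum_j (P_{C\to F_k})_{ij}\bm U^C_j\) has possibly negative coefficients, so they cannot be redistributed to coarse nodes. My plan is to avoid redistribution and instead split off part of the cell average. Because \(\mathcal G\) is convex and positively homogeneous, it suffices to show that the remaining interior combination \(\sum w_mw_n\mathcal J_{mn}\bm U_{mn} - \Delta t\sum_{k,i}\beta_i^{C,k}\widetilde{\bm U}_i^{C,k}\) stays admissible. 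Writing \(\widetilde{\bm U}\) in terms of the \(\bm U^C_j\), this is a linear combination of coarse boundary nodes with weights \(w_{m}w_n\mathcal J_{mn}-\Delta t\sum_{k,i}\beta_i^{C,k}(P_{C\to F_k})_{ij}\). Requiring these to be nonnegative is not quite enough because of the signs, so I will impose the sharper CFL condition \(\Delta t\sum_{k,i}\beta_i^{C,k}|(P_{C\to F_k})_{ij}|\le w_m w_n\mathcal J_{mn}\) at each coarse node \(j\). This is "analogous to \eqref{eq:CFL_pp}" with \(\beta_j^C\) replaced by the absolute-value weighted sum. An alternative, if that bookkeeping fails, is to treat each \(\widetilde{\bm U}_i^{C,k}\) and its split-off Lax--Friedrichs state as an extra admissible atom, and to bound the total weight of these atoms by a fraction of \(|\Omega_e|\) using the assumption \eqref{eq:interp_admissible} directly. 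Once all pieces are expressed as nonnegative multiples of elements of \(\mathcal G\) with total weight \(|\Omega_e|\), convexity gives \(\overline{\bm U}^{e,n+1}\in\mathcal G\). The limiter and SSP statements then follow as in Section~\ref{subsec:pp}.
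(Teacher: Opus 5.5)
Your proposal is correct and uses the convex-decomposition strategy the paper has in mind. The paper gives no detailed proof here: it only says that ``the same convexity argument'' as in Appendix~\ref{app:proof_pp} applies. You correctly notice that the coarse side cannot be copied from that appendix.

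There, \eqref{eq:coarse_pp_decomp} splits the flux node by node and relies on the sign factor in \eqref{lxf} to turn $(P_{F_k\to C})_{ji}$ into $|(P_{F_k\to C})_{ji}|$. The mortar flux \eqref{eq:coarse_flux_mortar} has no such factor. A pointwise split at a coarse node therefore produces either negative multiples of Lax--Friedrichs states or subtractions of the fine neighbor's states. In addition, the coarse element's own state in the flux is $\widetilde{\bm U}_i^{C,k}$, not a coarse nodal value.

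Your two steps are what make the argument work:
\begin{itemize}
\item First you sum against $w_j$, so that by \eqref{comp} and \eqref{nrm2} the coarse edge contributes exactly $\sum_{k,i} w_i\,\bm F^{*}(\bm U_i^{F_k},\widetilde{\bm U}_i^{C,k})\cdot\bm n_i^{F_k}$.
\item Then you expand the subtracted $\widetilde{\bm U}_i^{C,k}$ in the coarse nodal values.
\end{itemize}
Assumption \eqref{eq:interp_admissible} is then needed only to make the Lax--Friedrichs states admissible.

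A nice consequence follows from $w_i|(P_{C\to F_k})_{ij}| = 2w_j|(P_{F_k\to C})_{ji}|$. Your condition $\Delta t\sum_{k,i} w_i\alpha_i^{F_k}\|\bm n_i^{F_k}\|\,|(P_{C\to F_k})_{ij}|\le w_mw_n\mathcal J_{mn}$ reproduces exactly the coefficient $\beta_j^C$ of \eqref{betaC}. So your CFL is \eqref{eq:CFL_pp} itself, with the mortar wave speeds. The only change is on fine edges, where the factor $\sum_j|(P_{C\to F_k})_{ij}|$ in \eqref{betaF} becomes $1$.

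There are three small corrections.
\begin{itemize}
\item The coefficient of the subtracted interior state in the Lax--Friedrichs split is $w_i\alpha_i\|\bm n_i^{F_k}\|$, as in \eqref{betaE}, not half of it. With your halved $\beta_i^{F_k}$, the condition $\Delta t\,\beta\le w_mw_n\mathcal J_{mn}$ would be too weak by a factor of two.
\item You say that nonnegativity of the net weights $w_mw_n\mathcal J_{mn}-\Delta t\sum_{k,i}\beta_i^{C,k}(P_{C\to F_k})_{ij}$ is ``not quite enough''. This is mistaken (the net weight should also subtract the other incident edge at a corner node). Nonnegative weights on admissible nodal states already give an element of $\mathcal G$, and entries with $(P_{C\to F_k})_{ij}<0$ only add mass. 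The absolute-value condition is a convenient sufficient condition, not a necessary one.
\item Your fallback of treating $\widetilde{\bm U}_i^{C,k}$ as an extra atom is unnecessary. As stated, it is also unjustified: subtracting admissible states preserves admissibility only if the cell-average sum contains them with positive weight. It does not do so on curved elements.
\end{itemize}
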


\begin{remark}
The admissibility condition \eqref{eq:interp_admissible} is nontrivial due to the presence of negative entries in the interpolation matrix \(P_{C\to F_k}\). In particular, interpolation from the coarse edge to the fine-edge nodes does not, in general, preserve admissibility.
In practice, this issue can be addressed by applying the Zhang--Shu scaling limiter to the coarse-edge solution prior to interpolation, so that the resulting interpolated states at the fine-edge nodes remain admissible while preserving the coarse cell average. A similar strategy for data preparation in adaptive mesh refinement is described in Section~\ref{ppdt}.
\end{remark}

In summary, the mortar-based flux \eqref{eq:fine_flux_mortar}--\eqref{eq:coarse_flux_mortar} provides a high-order accurate and positivity-preserving nonconforming interface treatment, at the expense of losing a discrete entropy inequality.

\section{Oscillation elimination and adaptive mesh refinement}
\label{sec:oe_amr}

In this section, we present additional components necessary for a robust and practical implementation of positivity-preserving DGSEM on adaptively refined meshes. These include an oscillation elimination procedure on nonconforming meshes and adaptive mesh refinement (AMR) together with data transfer between mesh levels. We conclude the section with a summary of the overall algorithm.

\subsection{Oscillation elimination procedure}
\label{subsec:oe}

As discussed in the introduction, high-order DG approximations may
develop spurious oscillations in the vicinity of discontinuities. To
suppress these oscillations while retaining high-order accuracy in
smooth regions, we employ the oscillation-eliminating (OE) procedure
introduced in \cite{peng2025oedg} as an element-local post-processing
step. In the present setting, the OE operator is applied on
curvilinear meshes with nonconforming interfaces.

The construction follows the conforming-grid case detailed in
\cite[Section 4]{YangFu26} at the element level. The only modification
concerns the face-based jump indicators entering the damping
coefficients: on nonconforming interfaces, the traces on the two sides
are represented on different nodal sets and must therefore be
transferred to a common edge representation before the jump is
evaluated.

For each element $\Omega_e$, let
\[
\mathbb Q_N(\Omega_e)
:=
\{v\in L^2(\Omega_e): v\circ \bm\Phi^e \in \mathbb Q_N(\Omega_{\mathrm{ref}})\}
\]
denote the mapped tensor-product polynomial space of degree $N$. We define an element-local linear
operator
\[
\mathcal F_{\hat t}^e:[\mathbb Q_N(\Omega_e)]^4\to[\mathbb Q_N(\Omega_e)]^4,
\]
which acts componentwise on the DG solution. We write
\[
\bm U_\sigma^{e}(\hat t)=\mathcal F_{\hat t}^e(\bm U_N^{e}),
\]
where $\hat t$ denotes the pseudo time.

The OE operator is defined through the following initial value problem:
find $\bm U_\sigma^{e}(\hat t)\in[\mathbb Q_N(\Omega_e)]^4$ such that
\begin{equation}
\label{eq:oe_ode_nc}
\left\{
\begin{aligned}
&\frac{d}{d\hat t}
\int_{\Omega_e} \bm U_\sigma^{e}\cdot \bm v\,d\bm x
+
\sum_{m=0}^{N}s\,\delta_m^{(e)}(\bm U_N^e)
\int_{\Omega_e}
\bigl(\bm U_\sigma^{e}-P^{m-1}\bm U_\sigma^{e}\bigr)\cdot\bm v\,d\bm x
=0,\\
&\bm U_\sigma^{e}(0)=\bm U_N^{e},
\end{aligned}
\right.
\end{equation}
for all $\bm v\in [\mathbb Q_N(\Omega_e)]^4$,
where $s\in(0,1]$ is a scaling parameter that controls the strength of the OE procedure; see \cite[Remark~4.1]{YangFu26},
$\delta_m^{(e)}$ is the damping coefficient to be specified later, and  $P^m:\mathbb Q_N(\Omega_e)\to \mathbb Q_m(\Omega_e)$ denotes the
scalar $L^2$ projection,
\begin{equation}
\label{eq:oe_projection_nc}
\int_{\Omega_e}(P^m w-w)\,v\,d\bm x=0,
\qquad \forall v\in\mathbb Q_m(\Omega_e),
\end{equation}
extended componentwise to vector-valued functions. For notational
convenience, we set $P^{-1}:=P^0$.

Using the hierarchical decomposition
\[
u=P^0u+\sum_{k=1}^N\bigl(P^ku-P^{k-1}u\bigr),
\qquad u\in \mathbb Q_N(\Omega_e),
\]
applied componentwise to $\bm U_N^e$, one obtains the explicit OE
operator
\begin{equation}
\label{eq:oe_curve_apply_damp_nc}
\bm U_{\sigma}^{e}
=
P^0(\bm U_N^{e})
+
\sum_{k=1}^N
\exp\!\Bigl(
-s\Delta t\sum_{m=0}^{k}\delta_m^{(e)}(\bm U_N^e)
\Bigr)\,
\bigl(P^k(\bm U_N^{e})-P^{k-1}(\bm U_N^{e})\bigr).
\end{equation}
That is, each hierarchical increment
\(
P^k(\bm U_N^{e})-P^{k-1}(\bm U_N^{e})
\)
is damped by the factor
$$
\exp\!\bigl(-s\Delta t\sum_{m=0}^{k}\delta_m^{(e)}(\bm U_N^e)\bigr),
$$
while the constant mode \(P^0(\bm U_N^e)\) remains unchanged. In particular, the OE procedure preserves the cell average of each conserved variable. 

\subsubsection{Damping coefficients}

The damping coefficients are defined by
\begin{equation}
\label{eq:oe_delta_nc}
\delta_m^{(e)}
:=
\frac{\beta_e}{h_e}
\sum_{f\subset\partial\Omega_e}
\sigma_m^{(f)}(\bm U_N^e),
\end{equation}
where $\beta_e$ is an estimate of the local maximum wave speed on
$\Omega_e$, and $h_e$ is a characteristic element length scale. We define
\[
h_e := \max_{0\le i,j\le N} \sqrt{\mathcal J_{ij}^e},
\qquad
\beta_e := \|\bar{\bm u}^e\| + \bar c^e,
\]
where $\bar{\bm u}^e$ and $\bar c^e$ denote the cell-average velocity
and speed of sound, respectively.

For the vector-valued DG solution
\[
\bm U_N^e = (U_N^{e,1},\,U_N^{e,2},\,U_N^{e,3},\,U_N^{e,4})^T,
\]
the face indicator is defined componentwise by
\begin{equation}
\label{eq:oe_sigma_face_nc}
\sigma_m^{(f)}(\bm U_N^e)
=
\max_{1\le r\le 4}\sigma_m^{(f)}(U_N^{e,r}).
\end{equation}

To express the indicator in a unified form, we introduce the face-average operator
\begin{equation}
\label{eq:theta_def}
\Theta_f(w)
:=
\frac{1}{|f|}
\int_f w \, dS,
\end{equation}
for any integrable function $w$ defined on the face $f$.

Using \eqref{eq:theta_def}, the face indicator can be written as
\begin{equation}
\label{eq:oe_sigma_unified}
\sigma_m^{(f)}(U_N^{e,r})
=
\begin{cases}
0,
& U_N^{e,r}=\overline{U}_N^{e,r},\\[1.2ex]
\dfrac{(2m+1)\,h_e^m}{2(2N-1)\,m!}
\displaystyle\sum_{|\bm\alpha|\le m}
\dfrac{
\Theta_f\!\left(
\left|
\llbracket \partial^{\bm\alpha} U_N^{e,r} \rrbracket_f
\right|
\right)
}{
\|U_N^{e,r}-\overline{U}_N^{e,r}\|_{L^\infty(\Omega_e)}
},
& \text{otherwise},
\end{cases}
\end{equation}
where $\bm\alpha=(\alpha_1,\alpha_2)$ is a multi-index with
$|\bm\alpha|=\alpha_1+\alpha_2$, and
\[
\partial^{\bm\alpha}U
=
\frac{\partial^{|\bm\alpha|}U}
{\partial x^{\alpha_1}\partial y^{\alpha_2}}.
\]

The remaining ingredient is the definition of the jump operator
$\llbracket\cdot\rrbracket_f$, which depends on the type of interface.

\medskip
\noindent
\textbf{Conforming faces.}
If $f$ is a conforming interface between $\Omega_e$ and a neighboring
element $\Omega_{e'}$, the jump is defined by
\[
\llbracket w \rrbracket_f := w|_{\Omega_e} - w|_{\Omega_{e'}}.
\]

\noindent
\textbf{Nonconforming faces.}
On nonconforming interfaces, the traces on the two sides are represented
on different nodal sets and must be transferred to a common fine face
representation using the projection operator
\eqref{eq:P_C_to_Fk}.

For a fine nonconforming face $f=E_{F_k}\subset\partial\Omega_e$, the jump is
evaluated on the fine nodal set as
\[
\llbracket w \rrbracket_f
:=
w|_{\Omega_e}
-
\mathcal I_{C\to F_k}(w|_{\Omega_{e'}}),
\]
where $\Omega_{e'}$ is the neighboring coarse element, and
$\mathcal I_{C\to F_k}$ is the nodal interpolation operator that maps
data from the coarse face to the fine face $F_k$.

For a coarse nonconforming face $f=E_C\subset\partial\Omega_e$, the jump
is evaluated by combining the contributions from the two fine segments:
\[
\llbracket w \rrbracket_f
:=
\left\{
\mathcal I_{C\to F_k}(w|_{\Omega_e}) - w|_{\Omega_{e_k}}
\right\}_{k=1,2},
\]
where $\Omega_{e_k}$ are the neighboring fine elements corresponding to
$E_{F_k}$. The contributions from both segments are included in the
face-average operator $\Theta_f$.

With these definitions, \eqref{eq:oe_sigma_unified} applies uniformly to
both conforming and nonconforming interfaces.

\subsubsection{Shock indicator and selective application of OE}
As in the conforming-grid case, evaluating all damping coefficients on
every element is computationally expensive. We therefore apply the OE
operator only to \emph{troubled elements} identified by a low-cost shock
indicator.

For each element $\Omega_e$, we define
\begin{equation}
\label{eq:oe_indicator_nc}
\mathcal I^{(e)}(\bm U_N^e)
=
\sum_{f\subset\partial\Omega_e}
\sigma_0^{(f)}(\bm U_N^e),
\end{equation}
where $\sigma_0^{(f)}$ is computed using the unified definition
\eqref{eq:oe_sigma_unified}, with the jump operator interpreted according
to the type of face $f$.

An element $\Omega_e$ is marked as troubled if
\begin{equation}
\label{eq:oe_threshold_nc}
\mathcal I^{(e)}(\bm U_N^e)>C_{\mathrm{oe}},
\end{equation}
where $C_{\mathrm{oe}}>0$ is a user-prescribed threshold. For troubled elements, the
full set of damping coefficients $\{\delta_m^{(e)}\}_{m=0}^N$ is computed
from \eqref{eq:oe_delta_nc}, and the OE update
\eqref{eq:oe_curve_apply_damp_nc} is applied. Otherwise, the solution is
left unchanged.

\begin{algorithm}[H]\small
  \DontPrintSemicolon
  \SetKwInOut{Input}{Input}\SetKwInOut{Output}{Output}

  \Input{DG solution $\{\bm U_N^e\}_e$; threshold constant $C_{\mathrm{oe}}>0$.}
  \Output{OE-processed solution $\{\bm U_\sigma^e\}_e$.}

  \ForEach{element $\Omega_e$}{
    Compute $\mathcal I^{(e)}(\bm U_N^e)$ from \eqref{eq:oe_indicator_nc}\;
    \eIf{$\mathcal I^{(e)}(\bm U_N^e)>C_{\mathrm{oe}}$}{
      Compute $\delta_m^{(e)}$ from \eqref{eq:oe_delta_nc}\;
      Apply \eqref{eq:oe_curve_apply_damp_nc} to obtain $\bm U_\sigma^e$\;
    }{
      Set $\bm U_\sigma^e \leftarrow \bm U_N^e$\;
    }
  }
  \caption{Selective OE procedure on curvilinear nonconforming meshes.}
  \label{alg:oe_nc}
\end{algorithm}

\subsection{Adaptive mesh refinement}
\label{subsec:amr}

We now describe the adaptive mesh refinement (AMR) strategy. As noted in
the introduction, we employ a $2{:}1$ refinement procedure driven by the
shock indicator \eqref{eq:oe_indicator_nc}. The same indicator used in
the OE procedure is reused here to detect under-resolved regions,
providing a unified mechanism for oscillation control and mesh
adaptation.

\subsubsection{Refinement and coarsening criteria}

For each element $\Omega_e$, we evaluate the indicator
$\mathcal I^{(e)}(\bm U_N^e)$, which is exactly the same indicator used in shock detecting. Given two thresholds
$C_{\mathrm{ref}}$ and $C_{\mathrm{crs}}$ with
$C_{\mathrm{ref}}\geq C_{\mathrm{crs}}>0$, we apply the following absolute value criteria:
\begin{itemize}
\item \emph{Refinement}: if $\mathcal I^{(e)}(\bm U_N^e) > C_{\mathrm{ref}}$,
the element $\Omega_e$ is marked for refinement.
\item \emph{Coarsening}: if $\mathcal I^{(e)}(\bm U_N^e) < C_{\mathrm{crs}}$,
the element is marked for coarsening.
\end{itemize}

We employ the AMR infrastructure provided by \texttt{MFEM} \cite{anderson2021mfem}, which consists of a refinement stage followed by a coarsening stage. The mesh is maintained under a $2{:}1$ balance constraint throughout both stages. In the coarsening stage, an element $\Omega_e$ is coarsened only if all of its four children $\{\Omega_{e_k}\}_{k=1}^4$ are marked for coarsening. Parallel load balancing is performed after mesh adaptation to maintain computational efficiency. In addition, a maximum refinement level $L$ is enforced, so that once an element has been subdivided $L$ times, it is no longer refined, thereby imposing a lower bound on the mesh size.

\subsubsection{Positivity-preserving data transfer}
\label{ppdt}
A key requirement of the AMR procedure is that admissibility of the DG
solution is preserved under mesh adaptation. We treat refinement and
coarsening separately.

\textit{Refinement.}
Suppose a coarse element $\Omega_e$ is subdivided into four children
$\{\Omega_{e_k}\}_{k=1}^4$. Before performing data transfer, we apply
the Zhang--Shu positivity-preserving limiter on $\Omega_e$ so that the
polynomial solution $\bm U_N^e$ is admissible at all collocation points.

The solution on each child element is then obtained by polynomial
restriction:
\[
\bm U_N^{e_k} = \bm U_N^e|_{\Omega_{e_k}},
\qquad k=1,\dots,4.
\]
Since admissibility holds pointwise on $\Omega_e$, it is inherited by
the restricted solution on each $\Omega_{e_k}$. Therefore, all nodal
values on the refined mesh remain in the admissible set $\mathcal G$.

\textit{Coarsening.}
Suppose four fine elements $\{\Omega_{e_k}\}_{k=1}^4$ are merged into a
coarse element $\Omega_e$. The solution on $\Omega_e$ is defined as the
$L^2$ projection of the fine-grid solution onto $\mathbb Q_N(\Omega_e)$:
\begin{equation}
\label{eq:amr_projection}
\int_{\Omega_e} \bm U_N^e \cdot \bm v \, d\bm x
=
\sum_{k=1}^4
\int_{\Omega_{e_k}} \bm U_N^{e_k} \cdot \bm v \, d\bm x,
\qquad
\forall \bm v \in [\mathbb Q_N(\Omega_e)]^4.
\end{equation}
In practice, the integrals are evaluated using LGL quadrature on the
coarse element (left-hand side) and its children (right-hand side).

By construction, \eqref{eq:amr_projection} preserves the cell average.
Since each fine element solution is admissible, the resulting coarse
cell average remains in $\mathcal G$. However, admissibility may not
hold at all collocation points of $\Omega_e$. To restore nodal
admissibility, we apply the Zhang--Shu limiter on $\Omega_e$, which
preserves the cell average while enforcing admissibility at all nodal
points.

This procedure ensures that admissibility of the DG solution is
maintained under both refinement and coarsening operations, while
preserving conservation and high-order accuracy.

\subsection{Summary of the full algorithm}
\label{subsec:algorithm}

We summarize the complete algorithm for the positivity-preserving and
entropy-stable DGSEM on nonconforming curvilinear meshes with
oscillation elimination and adaptive mesh refinement.

Given an initial mesh $\{\Omega_e\}_e$ and the DG solution
$\{\bm U_N^e\}_e$ at time $t^n$, one time step proceeds as follows.

\begin{enumerate}

\item \textbf{Time integration.}
Advance the DG solution using an SSP Runge--Kutta method (SSP-RK3 is
used in the numerical experiments), under a proper CFL condition
for time step size. At each stage, the semi-discrete system
\eqref{eq:es_dgsem} is evaluated using
\begin{itemize}
\item the local Lax--Friedrichs flux \eqref{lax} on conforming interfaces,
\item the nonconforming fluxes \eqref{eq:fine_flux}--\eqref{eq:coarse_flux}
with the modified flux \eqref{lxf} on nonconforming interfaces.
\end{itemize}

\item \textbf{Oscillation elimination (OE).}
After each stage, compute the shock indicator
$\mathcal I^{(e)}(\bm U_N^e)$ using \eqref{eq:oe_indicator_nc} with
threshold $C_{\mathrm{oe}}>0$, and apply the OE procedure described in
Algorithm~\ref{alg:oe_nc}.

\item \textbf{Positivity enforcement.}
After OE, apply the Zhang--Shu positivity-preserving limiter on each
element to ensure that all nodal values remain in the admissible set
$\mathcal G$.

\item \textbf{Adaptive mesh refinement and data transfer.}
At prescribed intervals (every $N_{\mathrm{AMR}}$ time steps), perform
mesh adaptation using the same indicator
$\mathcal I^{(e)}(\bm U_N^e)$. Elements are marked for refinement and
coarsening based on thresholds $C_{\mathrm{ref}}$ and
$C_{\mathrm{crs}}$, with $C_{\mathrm{ref}}\geq C_{\mathrm{crs}}>0$.

\begin{itemize}
\item \emph{Refinement stage:}
Apply the Zhang--Shu limiter on all elements marked for refinement to
ensure admissibility. Refine each marked element into four children and
transfer the solution by polynomial restriction. Perform parallel load
balancing after refinement.

\item \emph{Coarsening stage:}
Coarsen elements only when all four children
$\{\Omega_{e_k}\}_{k=1}^4$ are marked. Transfer the solution to the
coarse element by the $L^2$ projection \eqref{eq:amr_projection}, and
then apply the Zhang--Shu limiter to restore nodal admissibility.
Perform parallel load balancing after coarsening.
\end{itemize}

\end{enumerate}

This procedure preserves conservation, entropy stability, and
positivity, while enabling adaptive resolution of localized features of
the solution.

\section{Numerical results}
\label{sec:num}
In this section, we present a series of numerical examples to assess
the performance of the proposed method. Our goals are threefold: first, to examine the accuracy
of different nonconforming interface fluxes; second, to numerically verify that the
positivity-preserving mechanism remains effective in the AMR setting; and third, to demonstrate that
the proposed AMR strategy is effective on both Cartesian and
curvilinear meshes and is able to capture complicated flow structures
in challenging compressible flow problems. All simulations are carried
out within the open-source \texttt{MFEM} framework
\cite{anderson2021mfem,andrej2024high}.

Time integration is performed using the third-order
strong-stability-preserving Runge--Kutta method (SSPRK3). The
time step size is determined according to a CFL-type condition of the
form
\begin{equation}
\label{eq:cfl_num}
\Delta t
=
\frac{\mathrm{CFL}}{2N+1}\,
\min_{\Omega_e} \frac{ h_e}{\bigl(\|\bm u_e\|+c_e\bigr)},
\end{equation}
where $N$ is the polynomial degree, $h_e$ denotes a characteristic
local mesh size, and $\bm u_e$ and $c_e$ denote the local velocity and
sound speed, respectively. Unless otherwise stated, we set
$\mathrm{CFL}=0.8$. 
The oscillation-eliminating procedure uses
the fixed parameters
\[
s=0.2,
\qquad
C_{\mathrm{oe}}=0.1.
\]
In all AMR computations, mesh adaptation is performed every $10$ time
steps. The refinement and coarsening thresholds
$C_{\mathrm{ref}}$ and $C_{\mathrm{crs}}$, however, are
problem-dependent and will be specified separately for each test case.

\subsection{Isentropic vortex problem}
\label{subsec:num_vortex}

We begin with the isentropic Euler vortex problem\cite{vincent2011insights,shu2006essentially}. The domain is the periodic square
$\Omega=[-L/2,L/2]^2$, with $L = 20$. The exact solution is given by
\begin{equation}
\label{eq:num_vortex_exact}
\begin{aligned}
u(x,y,t)
&= u_\infty
-\frac{\beta}{2\pi}\bigl(y-v_\infty t\bigr)
\exp\!\left(
\frac12\Bigl[1-(x-u_\infty t)^2-(y-v_\infty t)^2\Bigr]
\right),\\
v(x,y,t)
&= v_\infty
+\frac{\beta}{2\pi}\bigl(x-u_\infty t\bigr)
\exp\!\left(
\frac12\Bigl[1-(x-u_\infty t)^2-(y-v_\infty t)^2\Bigr]
\right),\\
T(x,y,t)
&=
1-\frac{(\gamma-1)\beta^2}{8\gamma\pi^2}
\exp\!\left(
1-(x-u_\infty t)^2-(y-v_\infty t)^2
\right),\\
\rho(x,y,t)
&= T(x,y,t)^{\frac{1}{\gamma-1}},
\qquad
p(x,y,t)
= T(x,y,t)^{\frac{\gamma}{\gamma-1}},
\end{aligned}
\end{equation}
where $\gamma=1.4$, $\beta=5$, and $(u_\infty,v_\infty)=(1,1)$.

To assess convergence on nonconforming curvilinear meshes, we begin with a uniform $n\times n$ Cartesian partition with $n=10$ and apply a checkerboard refinement pattern. The resulting nonconforming mesh is then curved using the mapping
\begin{align*}
\tilde{x} = x + L \alpha \cos\!\left(\frac{2\pi}{L} y\right), \qquad
\tilde{y} = y + L \alpha \cos\!\left(\frac{2\pi}{L} x\right),
\end{align*}
with $\alpha=0.05$. This defines the baseline curvilinear nonconforming mesh (Figure~\ref{fig:vortex_nc_mesh}, left). Successively refined meshes are obtained via uniform refinement of this baseline mesh (Figure~\ref{fig:vortex_nc_mesh}, right).
\begin{figure}[H]
  \centering
  \begin{subfigure}{0.42\textwidth}
    \centering
    \includegraphics[width=\linewidth]{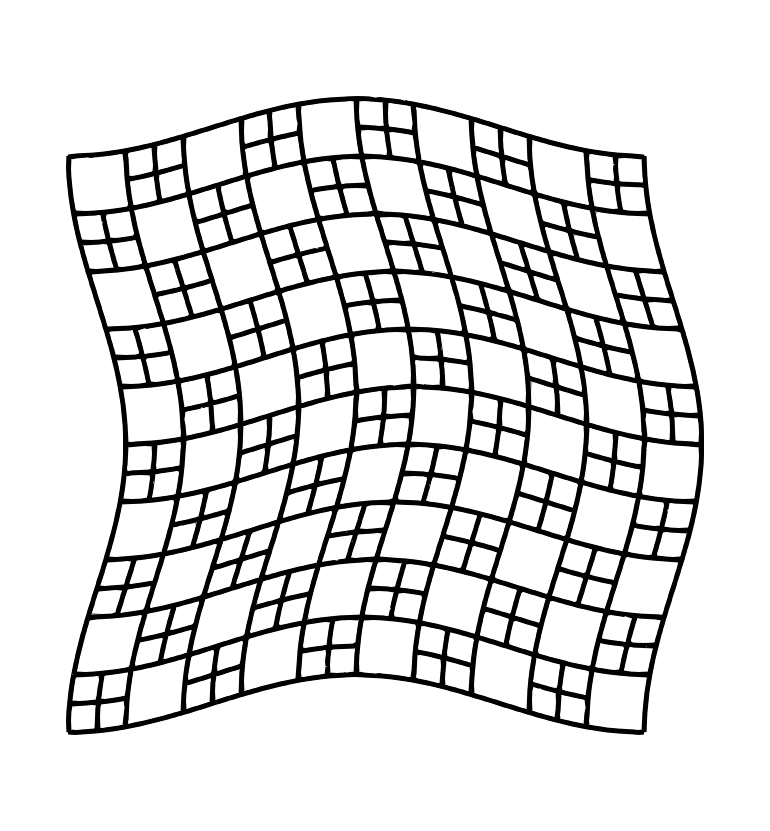}
    \caption{Initial curvilinear nonconforming mesh with $n=10$}
  \end{subfigure}\hfill
  \begin{subfigure}{0.42\textwidth}
    \centering
    \includegraphics[width=\linewidth]{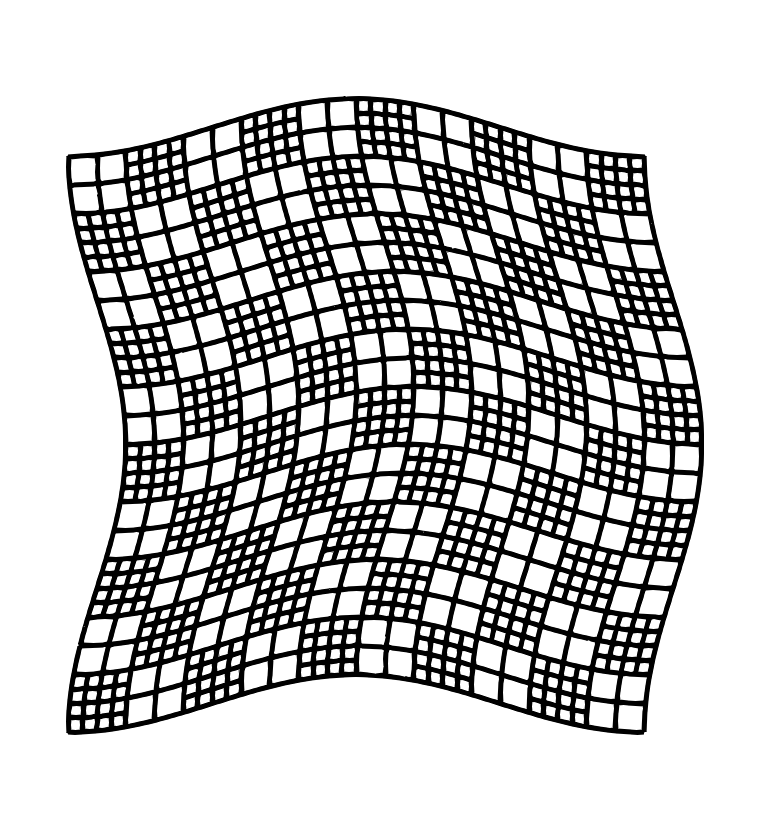}
    \caption{Uniformly refined nonconforming mesh}
  \end{subfigure}
  \caption{\textit{Isentropic Euler vortex problem}: initial checkerboard
 curvilinear nonconforming mesh and the mesh obtained after one additional uniform
  refinement.}
  \label{fig:vortex_nc_mesh}
\end{figure}

In all simulation reported here, the OE procedure and the positivity limiter are turned off. 
The solution is advanced to $t=0.2$, and errors are computed against the exact solution. Polynomial degrees $N=1,2,3$ are considered. We study spatial convergence using two numerical fluxes: the entropy-stable flux \eqref{eq:fine_flux}--\eqref{eq:coarse_flux} (denoted {\sf ES}) and the mortar-based flux \eqref{eq:fine_flux_mortar}--\eqref{eq:coarse_flux_mortar} (denoted {\sf mortar}). Time integration is performed using SSPRK3 with uniform time step $\Delta t = 0.2/(20\cdot 2^\ell)$, where $\ell$ denotes the refinement level ($\ell=0$ on the coarsest mesh).

Table~\ref{tab:vortex_conv} reports $L^2$ errors in density, momentum, and total energy. For $N=1$, both fluxes achieve approximately $1.5$th-order convergence. For higher orders ($N=2,3$), the {\sf ES} flux exhibits reduced convergence rates of approximately $1.0$--$1.5$, while the {\sf mortar} flux attains the design order~$N$ accuracy. This degradation for the {\sf ES} flux is consistent with the discussion in Remark~\ref{rem:sign_consistency}, where loss of accuracy is attributed to sign change in the numerical fluxes. We note that similar $O(h^N)$ convergence has been reported in \cite[Fig. 7]{ChanEtAl2021} for curvilinear nonconforming meshes when using LGL quadrature. 

Due to the reduced accuracy of the {\sf ES} flux for $N\ge 2$, we focus mostly on results obtained with the {\sf mortar} flux in the remaining examples.

\begin{table}[htbp]
\centering
\begin{subtable}{\textwidth}
\centering
\resizebox{\textwidth}{!}{%
\begin{tabular}{c c c c c c c c c c c c}
\hline
Flux & $\ell$  & $\|\rho-\rho_h\|_{L^2}$ & rate 
& $\|m_x-m_{x,h}\|_{L^2}$ & rate
& $\|m_y-m_{y,h}\|_{L^2}$ & rate&
$\|E-E_h\|_{L^2}$ & rate \\
\hline
\sf{ES} & 0 & $2.38\times 10^{-1}$ & -- & $5.02\times 10^{-1}$ & -- & $5.08\times 10^{-1}$ & -- & $1.10\times 10^{0}$ & -- \\
 & 1 & $9.39\times 10^{-2}$ & 1.34 & $2.43\times 10^{-1}$ & 1.05 & $2.49\times 10^{-1}$ & 1.03 & $5.48\times 10^{-1}$ & 1.01 \\
 & 2 & $3.35\times 10^{-2}$ & 1.49 & $8.24\times 10^{-2}$ & 1.56 & $8.19\times 10^{-2}$ & 1.61 & $1.95\times 10^{-1}$ & 1.49 \\
 & 3 & $1.13\times 10^{-2}$ & 1.57 & $2.56\times 10^{-2}$ & 1.69 & $2.60\times 10^{-2}$ & 1.65 & $6.55\times 10^{-2}$ & 1.58 \\
\hline
\sf{mortar} & 0 & $2.35\times 10^{-1}$ & -- & $4.85\times 10^{-1}$ & -- & $4.96\times 10^{-1}$ & -- & $1.09\times 10^{0}$ & -- \\
 & 1 & $9.79\times 10^{-2}$ & 1.26 & $2.36\times 10^{-1}$ & 1.04 & $2.44\times 10^{-1}$ & 1.02 & $5.64\times 10^{-1}$ & 0.96 \\
 & 2 & $3.66\times 10^{-2}$ & 1.42 & $8.22\times 10^{-2}$ & 1.52 & $8.21\times 10^{-2}$ & 1.57 & $2.11\times 10^{-1}$ & 1.42 \\
 & 3 & $1.32\times 10^{-2}$ & 1.47 & $2.76\times 10^{-2}$ & 1.58 & $2.81\times 10^{-2}$ & 1.55 & $7.57\times 10^{-2}$ & 1.48 \\
\hline
\end{tabular}}
\caption{$N=1$}
\end{subtable}

\vspace{0.6em}

\begin{subtable}{\textwidth}
\centering
\resizebox{\textwidth}{!}{%
\begin{tabular}{c c c c c c c c c c c c}
\hline
Flux & $\ell$  & $\|\rho-\rho_h\|_{L^2}$ & rate 
& $\|m_x-m_{x,h}\|_{L^2}$ & rate
& $\|m_y-m_{y,h}\|_{L^2}$ & rate&
$\|E-E_h\|_{L^2}$ & rate \\
\hline
\sf{ES}  & 0 & $8.01\times 10^{-2}$ & -- & $3.24\times 10^{-1}$ & -- & $3.09\times 10^{-1}$ & -- & $6.18\times 10^{-1}$ & -- \\
 & 1 & $4.57\times 10^{-2}$ & 0.81 & $1.01\times 10^{-1}$ & 1.68 & $1.00\times 10^{-1}$ & 1.63 & $2.47\times 10^{-1}$ & 1.32 \\
 & 2 & $1.89\times 10^{-2}$ & 1.27 & $3.47\times 10^{-2}$ & 1.54 & $3.50\times 10^{-2}$ & 1.52 & $9.35\times 10^{-2}$ & 1.40 \\
 & 3 & $7.73\times 10^{-3}$ & 1.29 & $1.23\times 10^{-2}$ & 1.50 & $1.16\times 10^{-2}$ & 1.59 & $3.30\times 10^{-2}$ & 1.50 \\
\hline
\sf{mortar} & 0 & $7.27\times 10^{-2}$ & -- & $1.96\times 10^{-1}$ & -- & $2.00\times 10^{-1}$ & -- & $4.75\times 10^{-1}$ & -- \\
 & 1 & $2.39\times 10^{-2}$ & 1.60 & $5.08\times 10^{-2}$ & 1.95 & $4.63\times 10^{-2}$ & 2.11 & $1.36\times 10^{-1}$ & 1.80 \\
 & 2 & $6.44\times 10^{-3}$ & 1.89 & $1.25\times 10^{-2}$ & 2.03 & $1.19\times 10^{-2}$ & 1.97 & $3.44\times 10^{-2}$ & 1.99 \\
 & 3 & $1.67\times 10^{-3}$ & 1.94 & $2.45\times 10^{-3}$ & 2.35 & $2.45\times 10^{-3}$ & 2.28 & $6.91\times 10^{-3}$ & 2.32 \\
\hline
\end{tabular}}
\caption{$N=2$}
\end{subtable}

\vspace{0.6em}

\begin{subtable}{\textwidth}
\centering
\resizebox{\textwidth}{!}{%
\begin{tabular}{c c c c c c c c c c c c}
\hline
Flux & $\ell$  & $\|\rho-\rho_h\|_{L^2}$ & rate 
& $\|m_x-m_{x,h}\|_{L^2}$ & rate
& $\|m_y-m_{y,h}\|_{L^2}$ & rate&
$\|E-E_h\|_{L^2}$ & rate \\
\hline
\sf{ES} & 0 & $9.76\times 10^{-2}$ & -- & $2.27\times 10^{-1}$ & -- & $2.18\times 10^{-1}$ & -- & $5.10\times 10^{-1}$ & -- \\
 & 1 & $5.06\times 10^{-2}$ & 0.94 & $9.57\times 10^{-2}$ & 1.24 & $9.53\times 10^{-2}$ & 1.18 & $2.54\times 10^{-1}$ & 1.00 \\
 & 2 & $2.29\times 10^{-2}$ & 1.14 & $3.84\times 10^{-2}$ & 1.32 & $3.82\times 10^{-2}$ & 1.32 & $1.05\times 10^{-1}$ & 1.27 \\
 & 3 & $1.01\times 10^{-2}$ & 1.18 & $1.52\times 10^{-2}$ & 1.34 & $1.47\times 10^{-2}$ & 1.38 & $4.23\times 10^{-2}$ & 1.31 \\
\hline
\sf{mortar} & 1 & $3.36\times 10^{-2}$ & -- & $7.23\times 10^{-2}$ & -- & $6.38\times 10^{-2}$ & -- & $1.92\times 10^{-1}$ & -- \\
 & 1 & $5.48\times 10^{-3}$ & 2.60 & $9.51\times 10^{-3}$ & 2.91 & $1.10\times 10^{-2}$ & 2.52 & $3.10\times 10^{-2}$ & 2.62 \\
 & 2 & $5.85\times 10^{-4}$ & 3.22 & $9.99\times 10^{-4}$ & 3.25 & $9.96\times 10^{-4}$ & 3.46 & $2.70\times 10^{-3}$ & 3.52 \\
 & 3 & $6.38\times 10^{-5}$ & 3.20 & $1.09\times 10^{-4}$ & 3.20 & $1.02\times 10^{-4}$ & 3.28 & $2.94\times 10^{-4}$ & 3.20 \\
\hline
\end{tabular}}
\caption{$N=3$}
\end{subtable}

\caption{\textit{Isentropic Euler vortex problem}: $L^2$ errors and
convergence rates on nonconforming curvilinear meshes for the
entropy-stable flux and the mortar-based flux. The three subtables
correspond to polynomial degrees $N=1,2,3$. {\sf ES} and {\sf mortar} represent entropy-stable flux and mortar-based flux.}
\label{tab:vortex_conv}
\end{table}

\subsection{Double Mach reflection}
\label{subsec:num_dmr}

The double Mach reflection problem is a standard benchmark for
compressible flow solvers in the presence of strong shocks, multiple
reflections, and intricate small-scale structures\cite{woodward1984numerical,shu2003high,zhang2017positivity}. It is particularly
useful for assessing the robustness of a numerical method as well as
its ability to resolve delicate flow features near slip lines and in
the downstream region. We consider the rectangular domain
$\Omega=[0,4]\times[0,1]$ and initialize a Mach-$10$ shock that makes
an angle of $60^\circ$ with the horizontal direction and initially
intersects the bottom boundary at $(x,y)=(1/6,0)$. The initial states
are given by
\begin{equation}
\label{eq:dmr_ic_main}
(\rho,u,v,p)=
\begin{cases}
(8,\;8.25\cos(\pi/6),\;-8.25\sin(\pi/6),\;116.5),
& x<\dfrac{1}{6}+\dfrac{y}{\sqrt{3}},\\[1ex]
(1.4,\;0,\;0,\;1),
& x>\dfrac{1}{6}+\dfrac{y}{\sqrt{3}}.
\end{cases}
\end{equation}
At the left boundary we impose the post-shock inflow state, while an
outflow condition is used on the right boundary. On the top boundary, a
time-dependent moving-shock boundary condition is prescribed. On the
bottom boundary, the post-shock state is imposed for $0<x<1/6$, and a
reflective wall boundary condition is applied elsewhere.

To examine the behavior of the proposed nonconforming interface
treatments in a demanding shock-dominated setting, we present two AMR
computations together with a conforming reference computation at
$t=0.2$. The first AMR result uses the entropy-stable flux with
polynomial degree $N=1$, while the second uses the mortar-based flux with
polynomial degree $N=2$. For comparison, we also include an order-$2$
computation on a conforming mesh without AMR\cite{YangFu26}. For the two
nonconforming AMR runs, we start from a $16\times 64$ Cartesian mesh,
so that the base mesh size is $h=1/16$, and allow up to five refinement
levels; therefore the finest locally refined cells have size
$h=1/512$. For $N=1$, we use AMR threshold constants
$C_{\mathrm{ref}}=C_{\mathrm{crs}}=0.2$, whereas for $N=2$ we use $C_{\mathrm{ref}}=C_{\mathrm{crs}}=0.05$. 
For the
conforming reference computation without AMR, we use a uniform
$240\times 960$ Cartesian mesh with mesh size $h=1/240$. All remaining
parameters are kept the same in these three runs. In each case, we show the density field over
$[0,3]\times[0,1]$, a zoomed-in view over $[2.2,2.8]\times[0,0.5]$,
and, for the AMR cases, the corresponding mesh distribution. The
density contours in the large-scale view are generated from 30 evenly
spaced contour levels.

Figure~\ref{fig:dmr_swap} shows the AMR result obtained with the entropy-stable
flux and $N=1$. 
Although the overall resolution is more limited than in
the higher-order computation, the method remains stable and captures
the main shock structures, the Mach stem, and the reflected waves
without visible breakdown. Figure~\ref{fig:dmr_mortar} presents the AMR
result obtained with the mortar-based flux and $N=2$. In this case the
solution exhibits substantially richer small-scale details, especially
in the downstream tail region and near the slip lines, demonstrating
the benefit of combining a high-order approximation with adaptive mesh
refinement. Finally, Figure~\ref{fig:dmr_conf} shows a
computation with $N=2$ on a conforming mesh without AMR. Compared with the AMR
results, the fine structures are much less pronounced, which highlights
the clear advantage of adaptive refinement in concentrating resolution
near the dynamically important regions of the flow.

\begin{figure}[htbp]
  \centering
  \begin{subfigure}{0.64\textwidth}
    \centering
    \includegraphics[width=\linewidth]{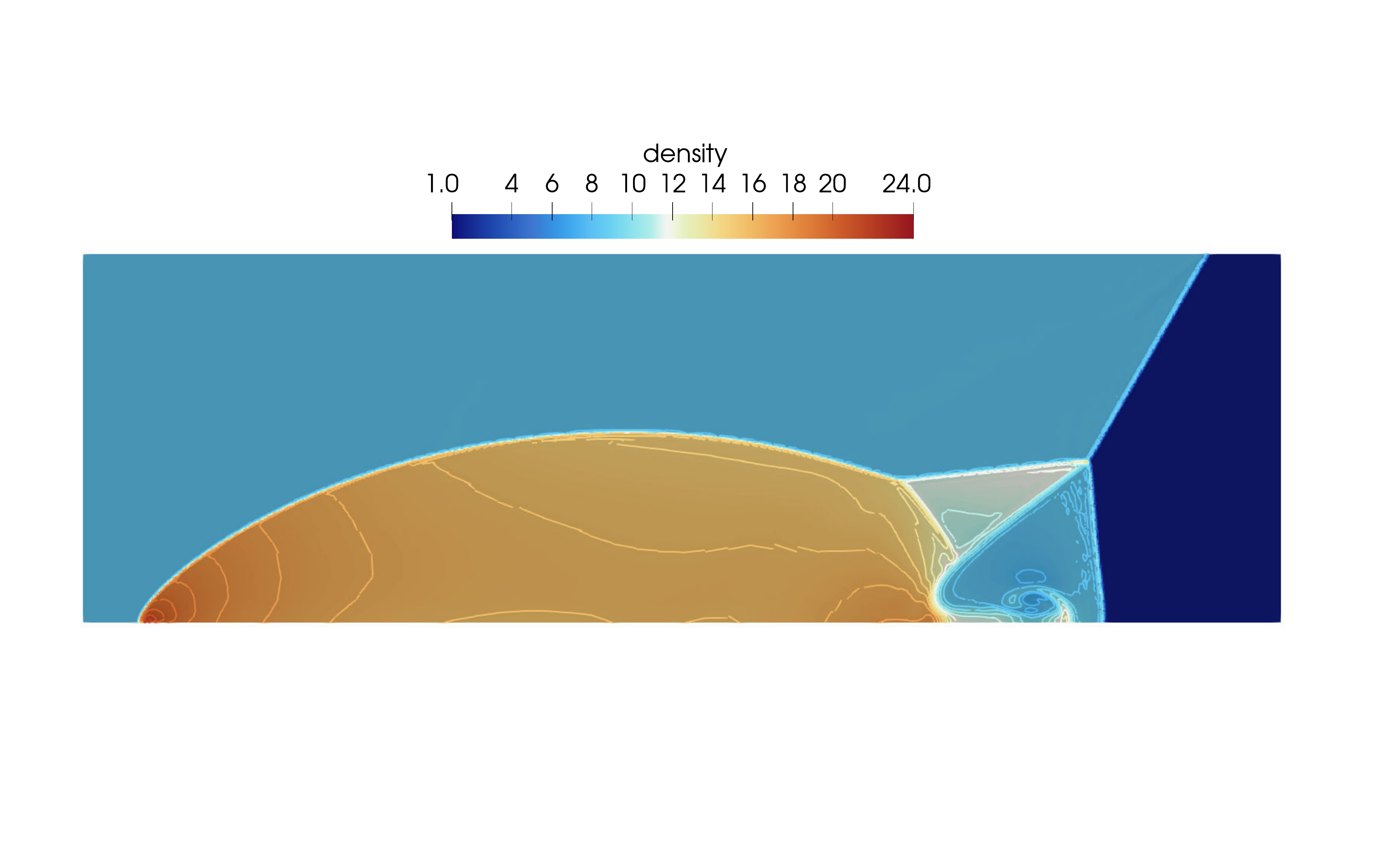}
    \caption{density profile and contour}
  \end{subfigure}
  \hspace{0.5cm}
  \begin{subfigure}{0.28\textwidth}
    \centering
    \includegraphics[width=\linewidth]{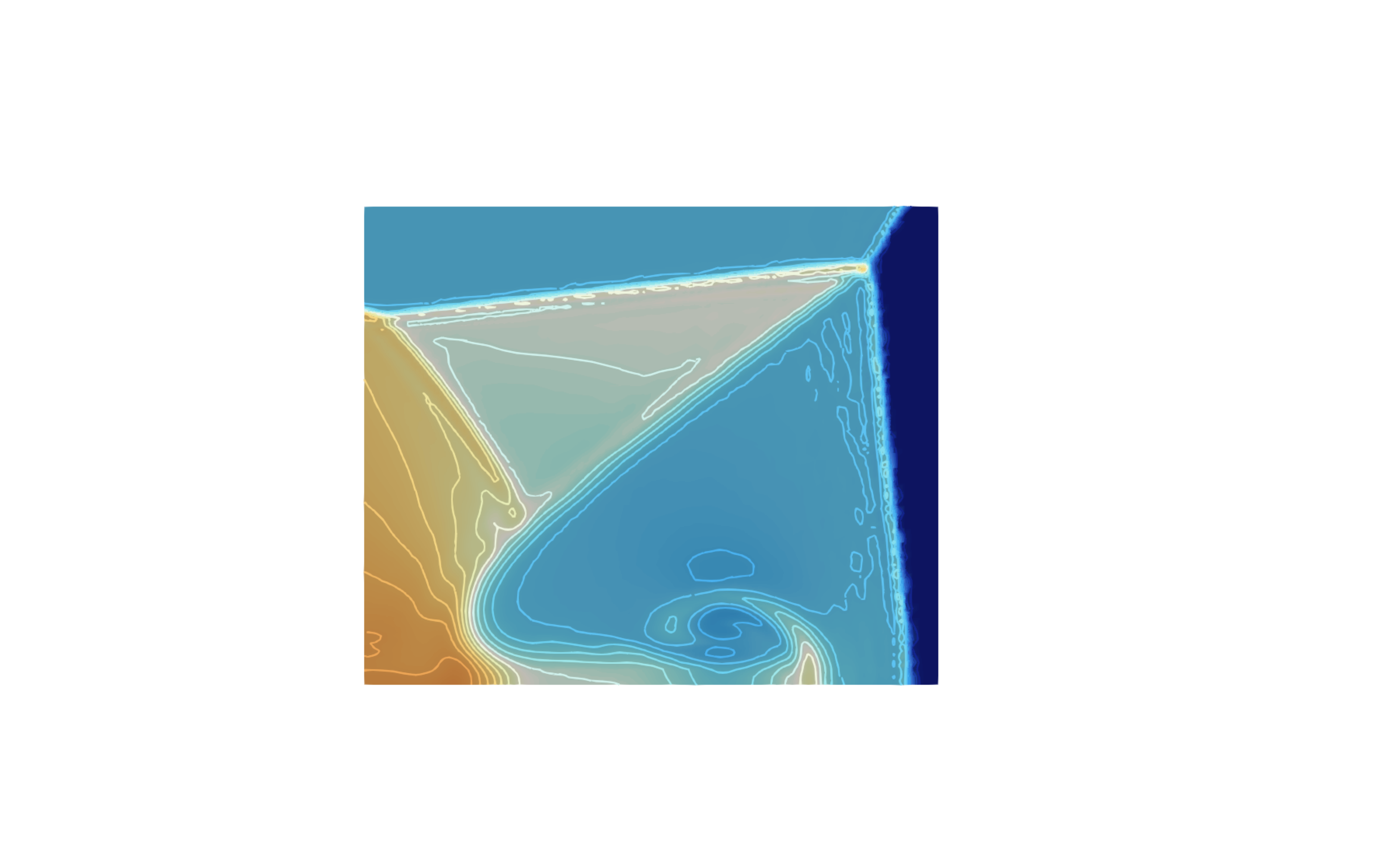}
    \caption{close-up of density profile}
  \end{subfigure}

  \vspace{0.5cm}

  \begin{subfigure}{0.64\textwidth}
    \centering
    \includegraphics[width=\linewidth]{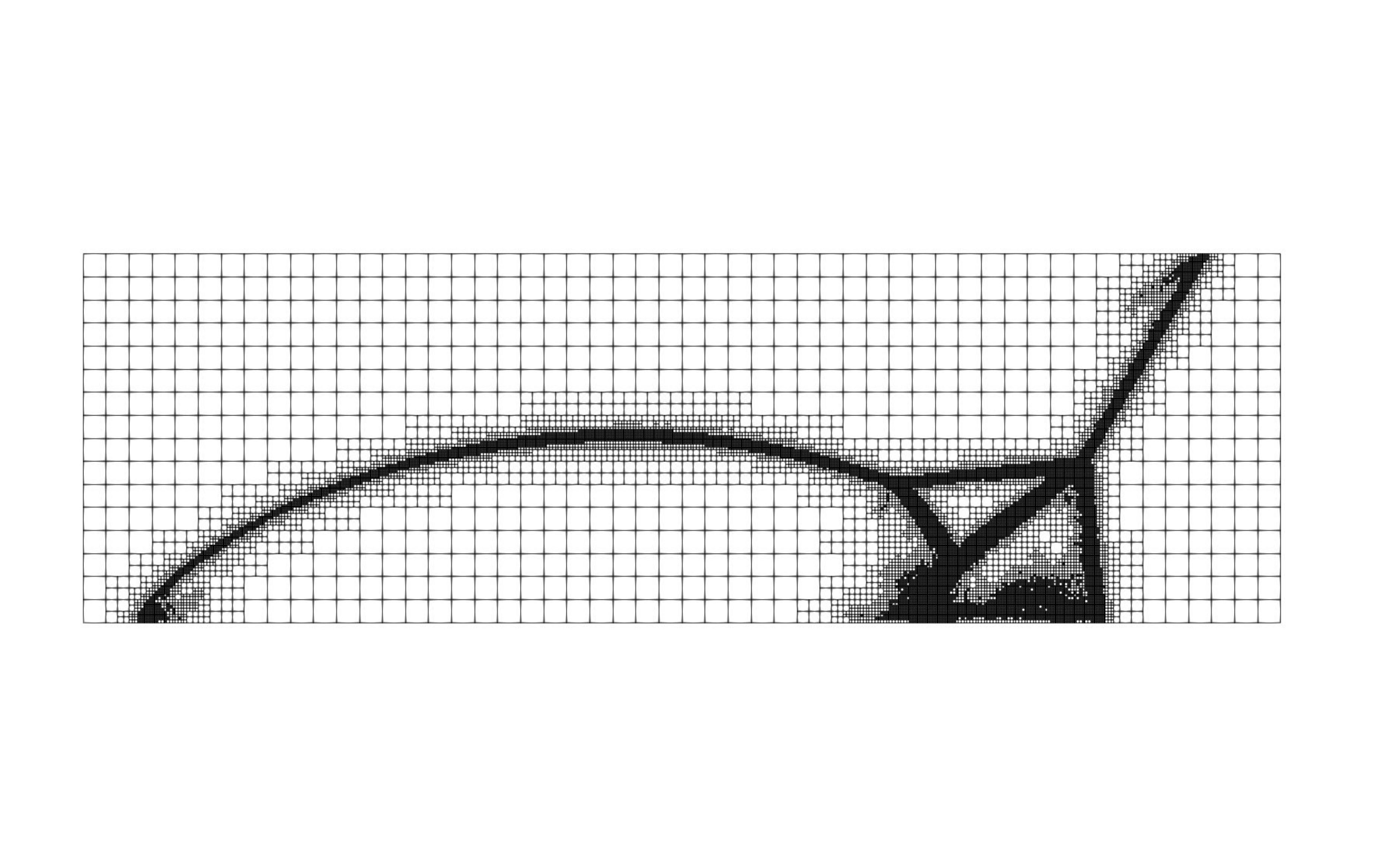}
    \caption{adaptive mesh}
  \end{subfigure}
  \caption{\textit{Double Mach reflection}: result obtained with the
  entropy-stable flux and polynomial degree $N=1$. (a) Density profile and
  contour in $[0,3]\times[0,1]$ at $t=0.2$; the contour is generated
  from 30 evenly distributed contour levels. (b) Close-up of the
  density profile in $[2.2,2.8]\times[0,0.5]$ at $t=0.2$. (c) Adaptive
  mesh used in the AMR computation. The initial mesh is $16\times 64$
  with base mesh size $h=1/16$, and up to five AMR levels are allowed.}
  \label{fig:dmr_swap}
\end{figure}

\begin{figure}[htbp]
  \centering
  \begin{subfigure}{0.64\textwidth}
    \centering
    \includegraphics[width=\linewidth]{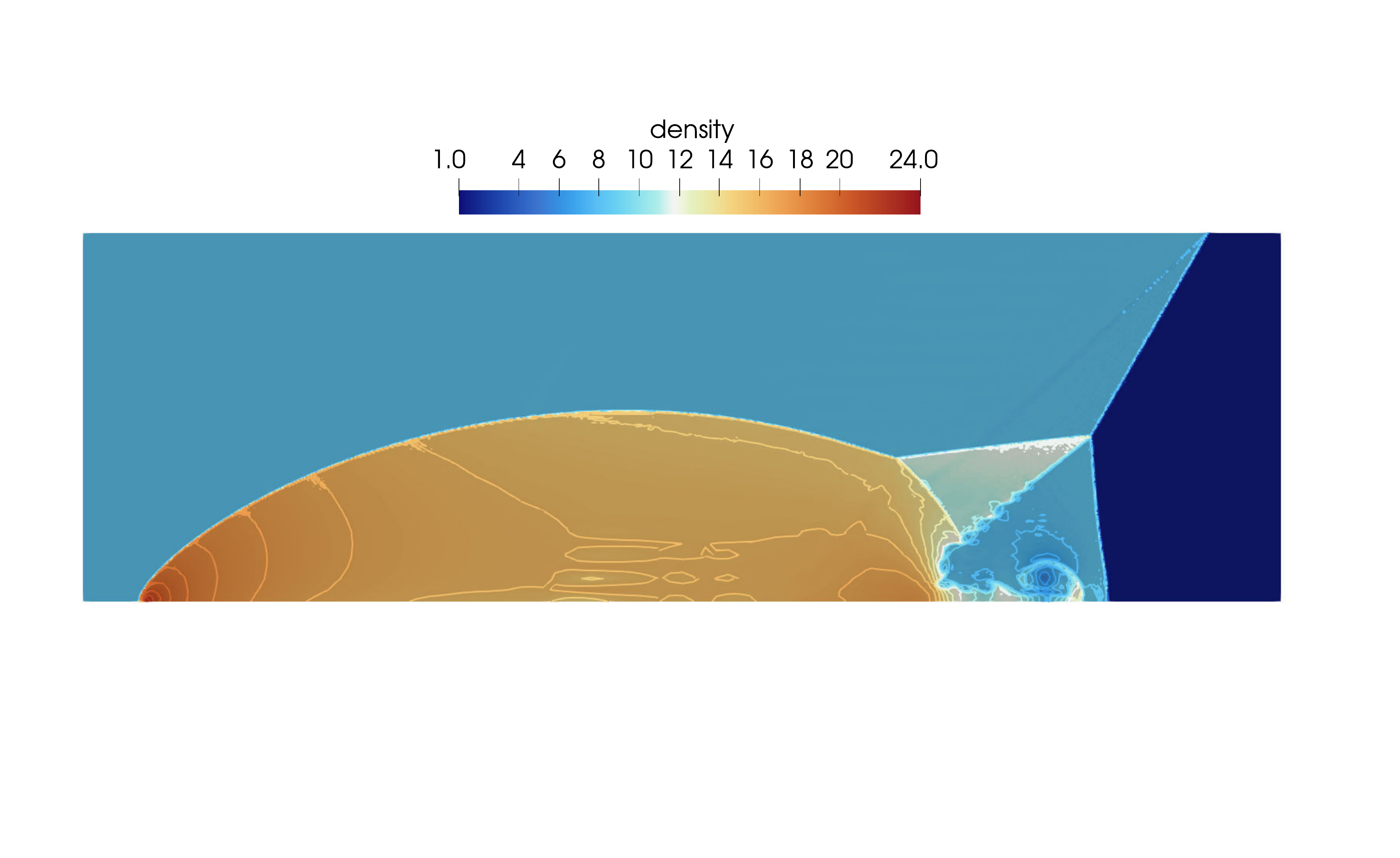}
    \caption{density profile and contour}
  \end{subfigure}
  \hspace{0.5cm}
  \begin{subfigure}{0.28\textwidth}
    \centering
    \includegraphics[width=\linewidth]{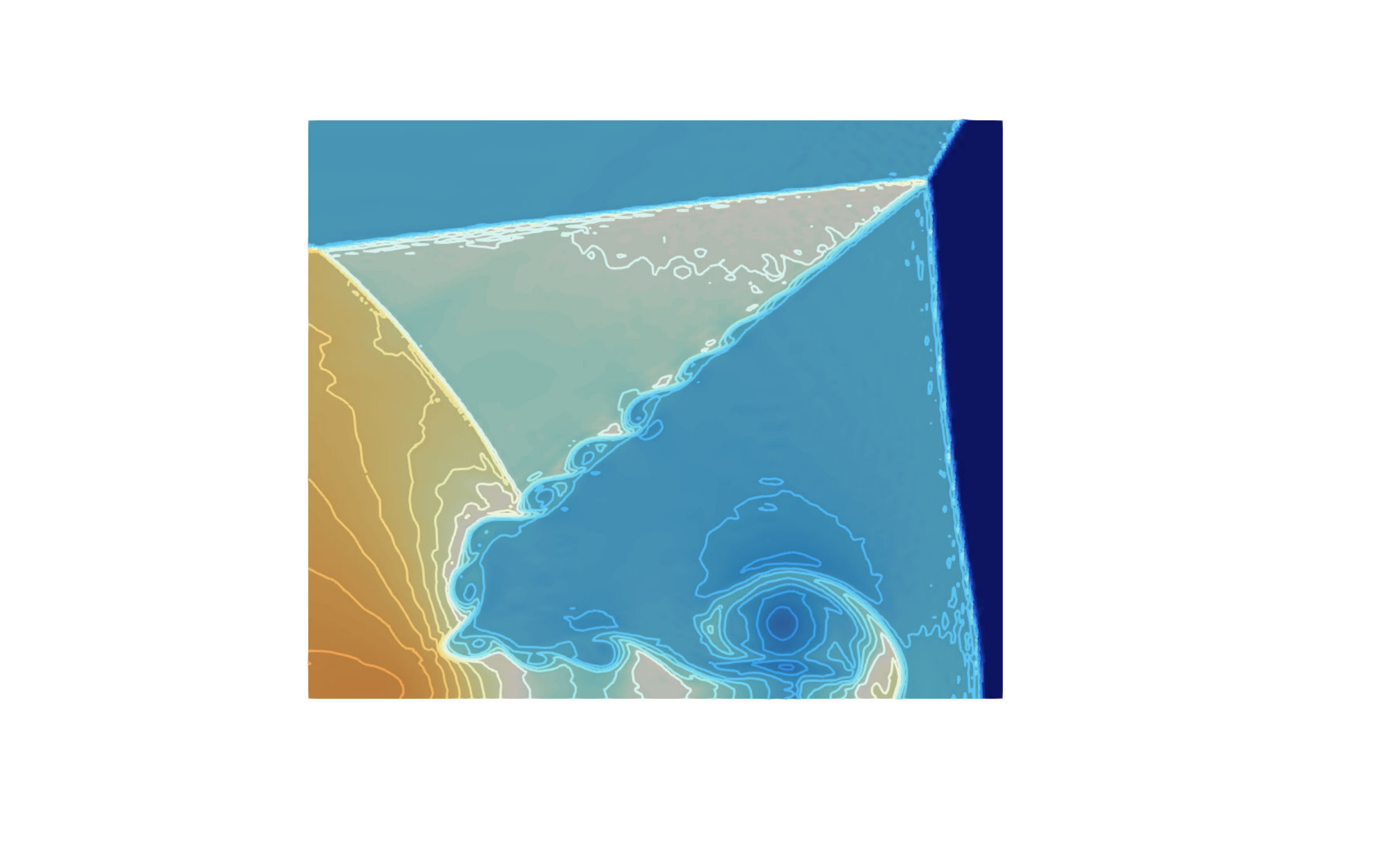}
    \caption{close-up of density profile}
  \end{subfigure}

  \vspace{0.5cm}

  \begin{subfigure}{0.64\textwidth}
    \centering
    \includegraphics[width=\linewidth]{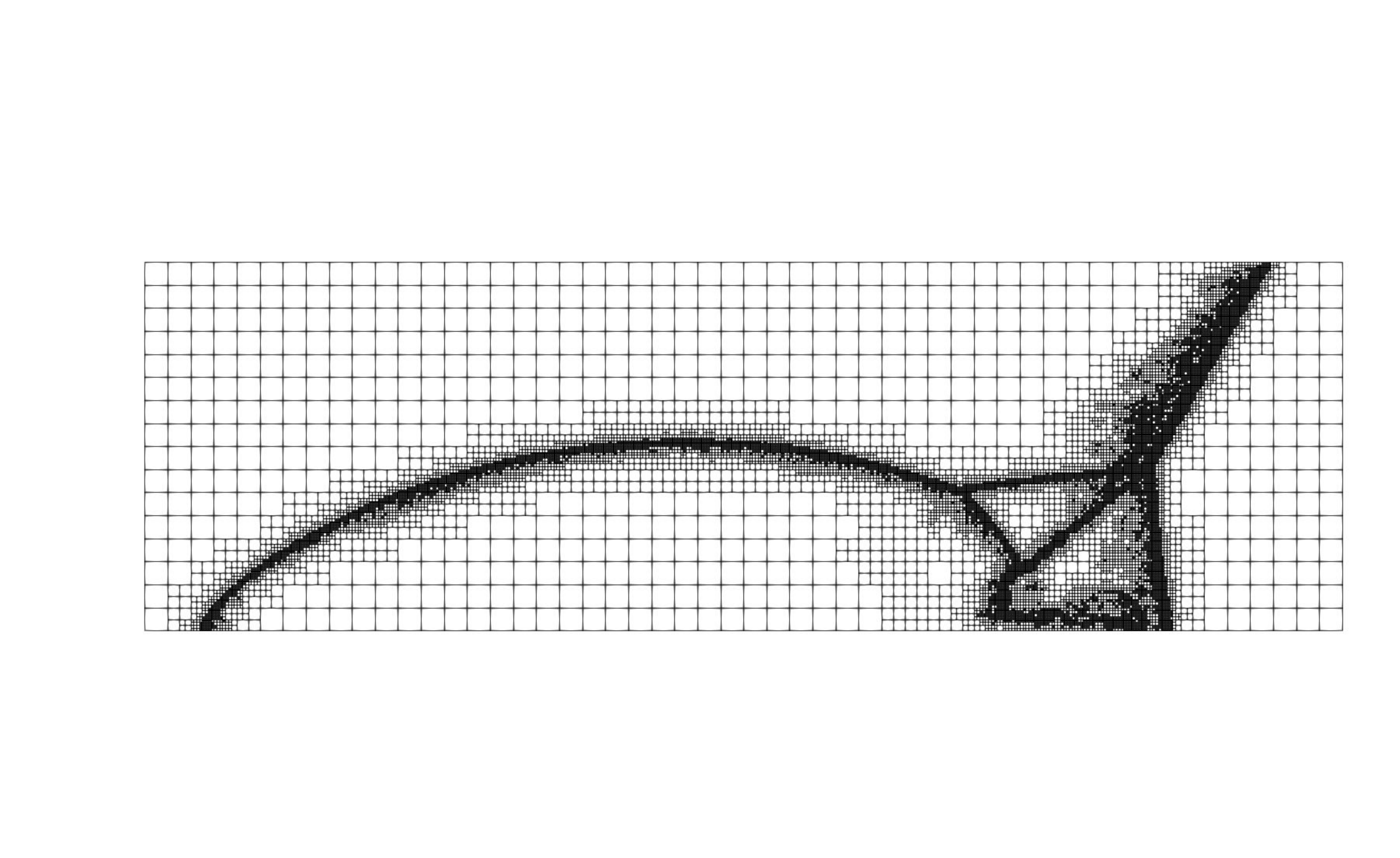}
    \caption{adaptive mesh}
  \end{subfigure}
  \caption{\textit{Double Mach reflection}: result obtained with the
  mortar-based flux and polynomial degree $N=2$. (a) Density profile and
  contour in $[0,3]\times[0,1]$ at $t=0.2$; the contour is generated
  from 30 evenly distributed contour levels. (b) Close-up of the
  density profile in $[2.2,2.8]\times[0,0.5]$ at $t=0.2$. (c) Adaptive
  mesh used in the AMR computation. The initial mesh is $16\times 64$
  with base mesh size $h=1/16$, and up to five AMR levels are allowed.}
  \label{fig:dmr_mortar}
\end{figure}

\begin{figure}[htbp]
  \centering
  \begin{subfigure}{0.64\textwidth}
    \centering
    \includegraphics[width=\linewidth]{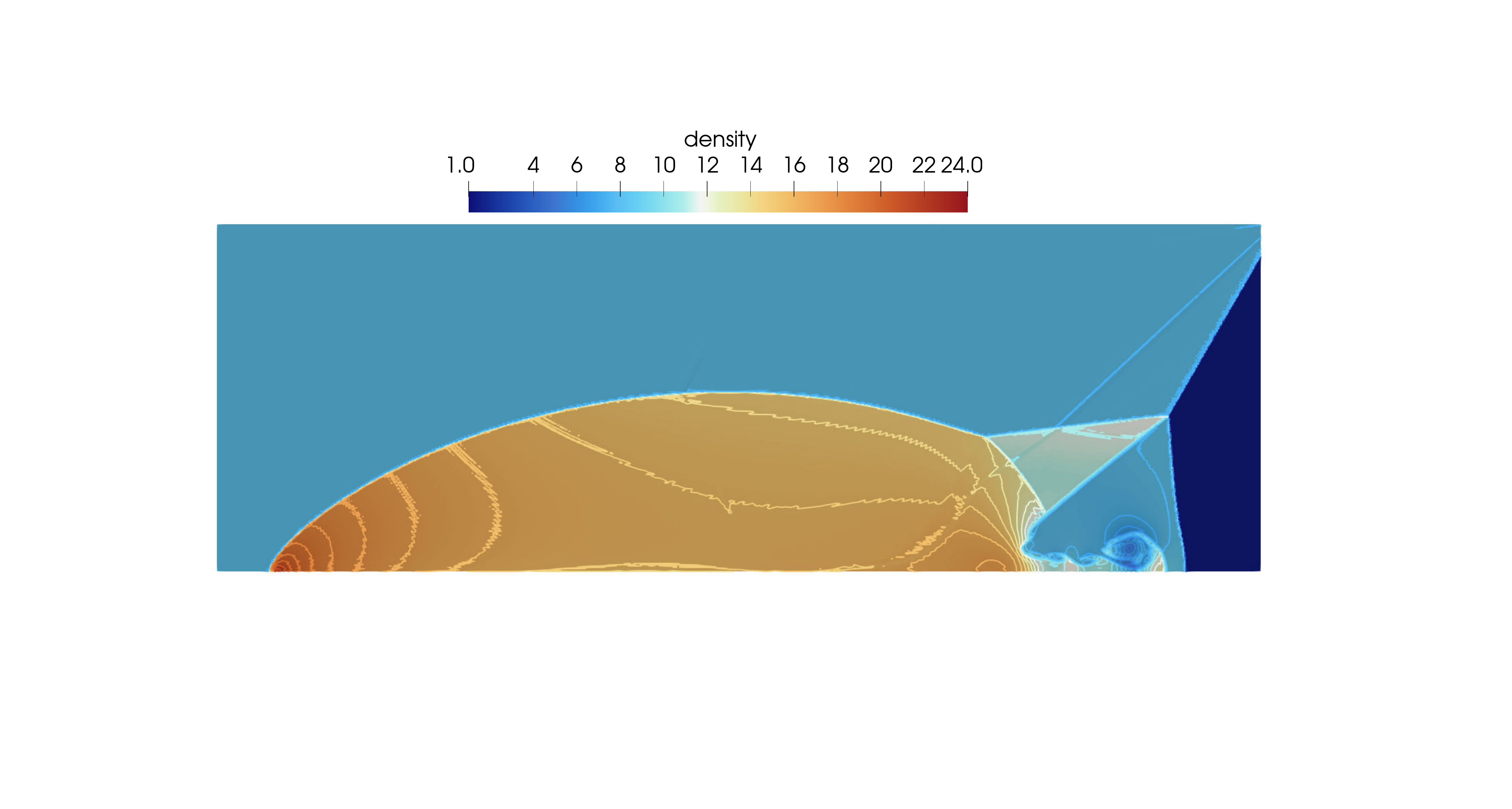}
    \caption{density profile and contour}
  \end{subfigure}
  \hspace{0.5cm}
  \begin{subfigure}{0.28\textwidth}
    \centering
    \includegraphics[width=\linewidth]{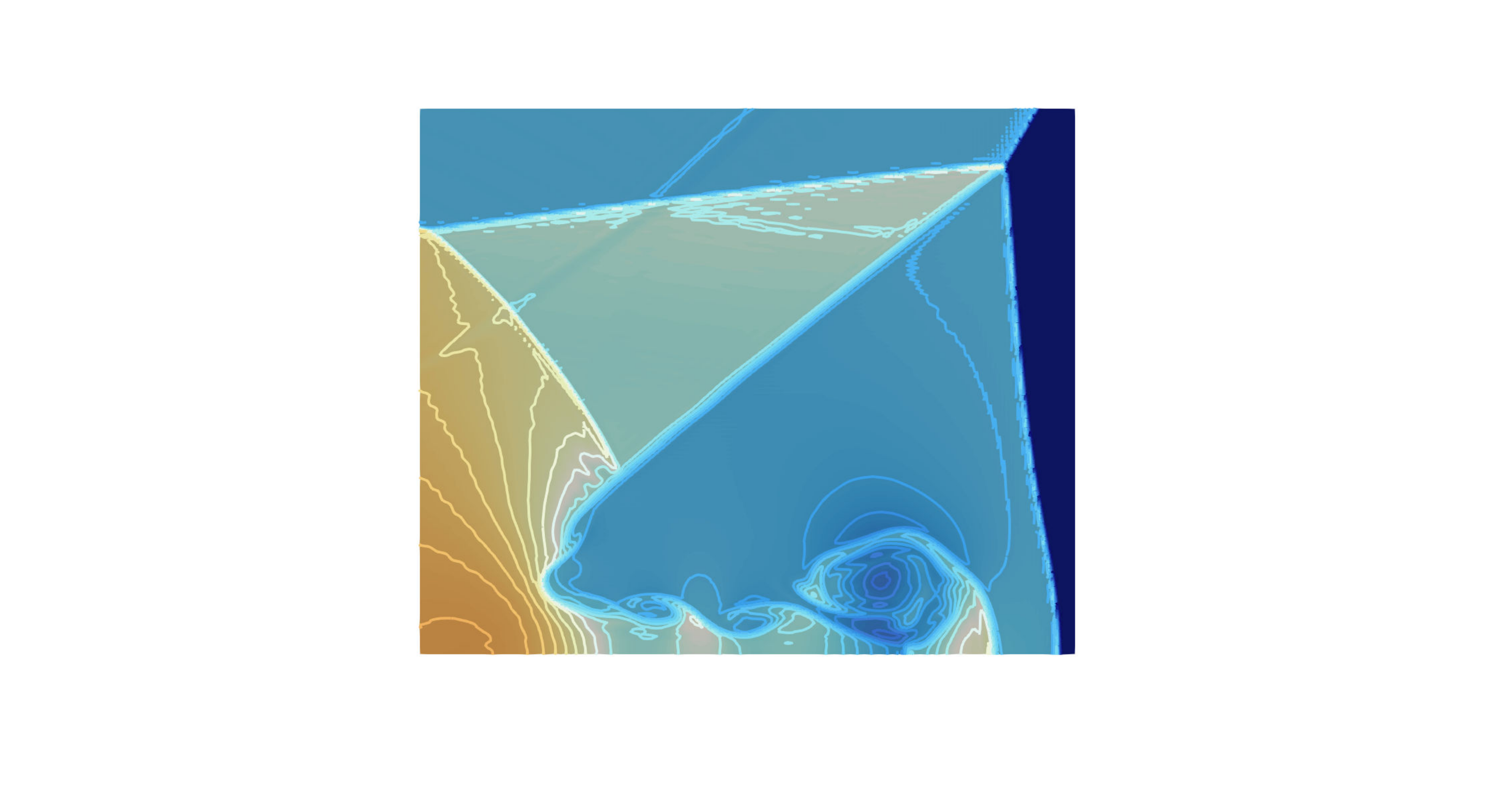}
    \caption{close-up of density profile}
  \end{subfigure}
  \caption{\textit{Double Mach reflection}: conforming-mesh result
  obtained with polynomial degree $N=2$. (a) Density profile
  and contour in $[0,3]\times[0,1]$ at $t=0.2$; the contour is generated from 30
  evenly distributed contour levels. (b) Close-up of the density
  profile in $[2.2,2.8]\times[0,0.5]$. A uniform $240\times 960$ mesh
  with mesh size $h=1/240$ is used.}
  \label{fig:dmr_conf}
\end{figure}
\subsection{High Mach number astrophysical jet}
\label{subsec:num_machjet}

As a stringent test of the positivity-preserving property of the
proposed method under extreme conditions, we consider a high Mach number
astrophysical jet problem. This problem has been widely used as a severe benchmark for high-order schemes because it
contains very large velocity and density contrasts and generates
regions in which the density and pressure can become dangerously close
to zero\cite{balsara2012self, zhang2010positivity,dzanic2022positivity, liu2024optimization}. In the present work, this example is mainly used to verify
that the Zhang--Shu positivity-preserving limiter remains effective on AMR meshes for
the mortar-based flux, as predicted by the theory developed in
Section~\ref{sec:pp_nc}.

We consider the half-domain $\Omega=[0,1]\times[0,0.5]$. The ambient
gas is initialized with $(\rho, u_x, u_y, p )=(0.5,\; 0,\; 0, \;10^{-2})$. On the left boundary $y=0$, we prescribe an inflow
condition everywhere: for $x<0.05$, the inflow state is set to
\[
(\rho, u_x, u_y, p ) = (5,\;0,\;800,\;0.4127),
\]
corresponding to an approximated Mach 2350 jet, while for $x>0.05$ the inflow state is
given by the ambient initial condition. A symmetry boundary condition
is imposed on the bottom boundary, and outflow boundary conditions are
used on all remaining  boundaries. The computation is carried out
with a $\mathbb Q_3$ approximation and the mortar-based flux. For the AMR
setup, we start from a uniform $300\times 150$ mesh, with $\Delta x=\dfrac{1}{150}$, use
$C_{\mathrm{ref}}=C_{\mathrm{crs}}=0.1$, and allow up to three
refinement levels. Thus the finest locally refined cells are eight
times smaller than the base mesh. The simulation is run until the final time $T=0.001$.

Figure~\ref{fig:machjet} shows the simulation results at $T=0.001$ with density profiles and adaptive mesh. This problem is particularly demanding for the positivity-preserving
mechanism, since the jet evolution produces very low-density regions in
which even a small loss of admissibility may cause the computation to
fail. Our numerical results show that the mortar-based discretization,
combined with the Zhang--Shu limiter, remains robust throughout the
simulation. 
\begin{figure}[ht]
  \centering
  \begin{subfigure}{0.8\textwidth}
    \centering
    \includegraphics[width=\linewidth]{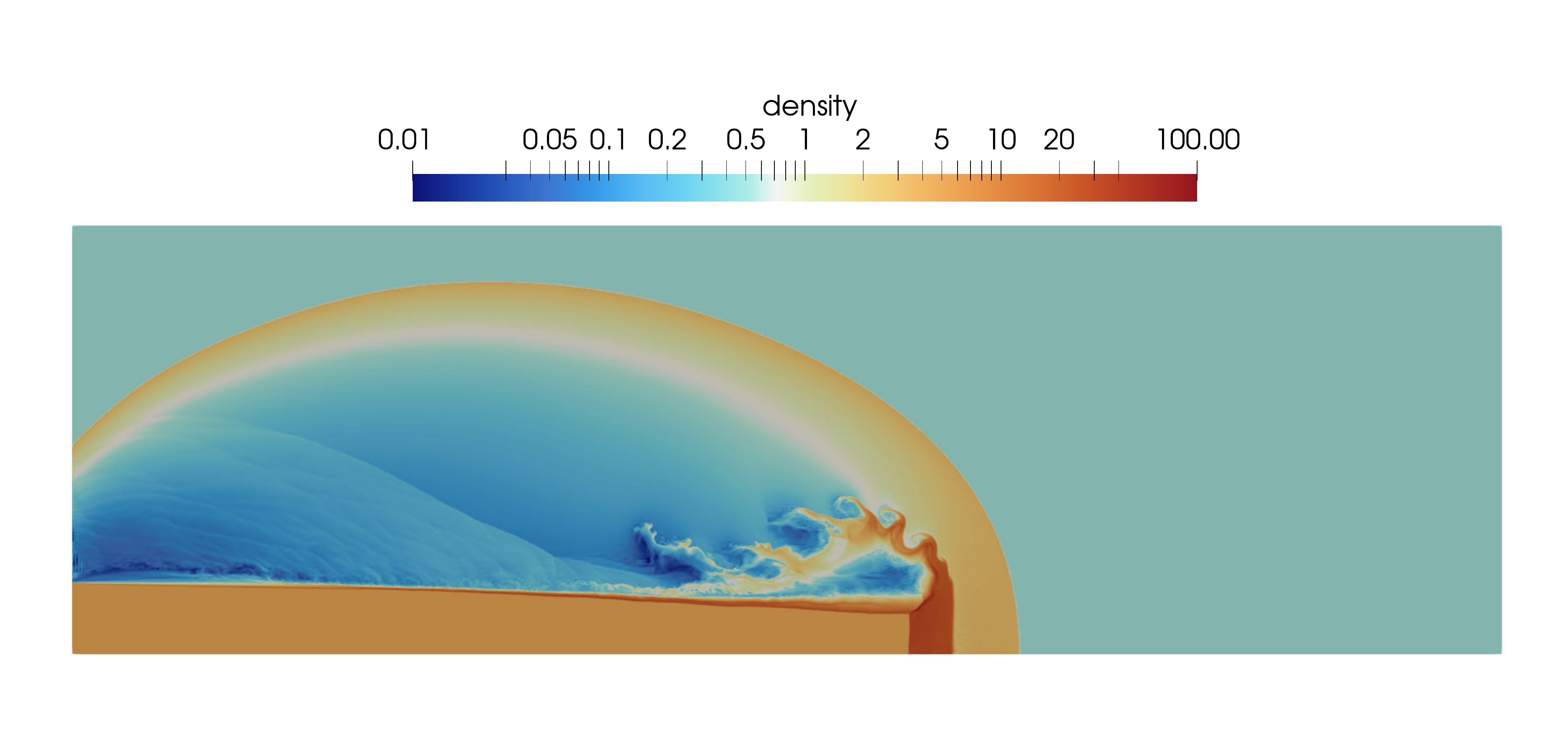}
    \caption{density profile}
  \end{subfigure}

  \vspace{0.5cm}

  \begin{subfigure}{0.8\textwidth}
    \centering
    \includegraphics[width=\linewidth]{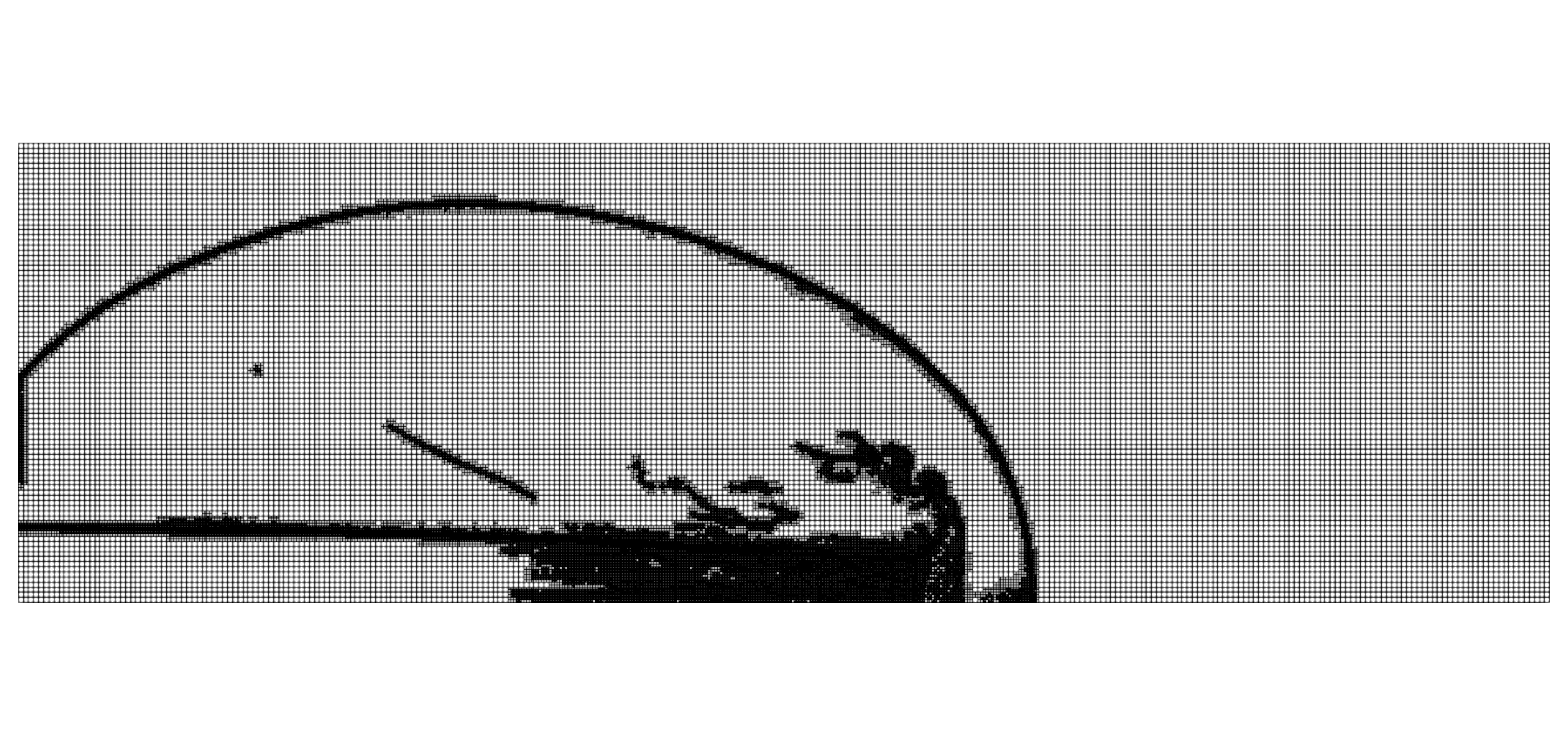}
    \caption{adaptive mesh}
  \end{subfigure}
  \caption{\textit{High Mach number astrophysical jet}: numerical result
  obtained with the mortar-based flux and polynomial degree $N=3$. The base
  mesh is $300\times 150$, and up to three AMR levels are allowed. The top panel shows the density field, while the bottom
  panel shows the adaptive mesh distribution. All plots are shown in $[0,1]\times[0,0.3]$.}
  \label{fig:machjet}
\end{figure}

\subsection{Supersonic flow past a circular cylinder}
\label{subsec:num_cylinder}

We next consider the classical problem of supersonic flow past a
two-dimensional circular cylinder\cite{guermond2018second,deng2023new,nazarov2017numerical,chen2025high,YangFu26}. This is a demanding benchmark on a
curvilinear domain, involving a strong bow shock, downstream shock
interactions, and the development of increasingly fine-scale structures such as Kelvin–Helmholtz instabilities. In the present work, this example is mainly used to
demonstrate that the AMR strategy remains highly effective on
curvilinear meshes and is able to capture rich flow features with a
moderate number of degrees of freedom.

The computational domain is
$\Omega=[-0.6,3.4]\times[-1,1]$ and contains a circular cylinder of
diameter $D=0.5$ centered at $(0,0)$. Following our previous work, the
mesh is generated by \texttt{Gmsh}
\cite{geuzaine2009gmsh,remacle2012blossom}. We use a nearly uniform
curvilinear quadrilateral mesh obtained with target mesh size $0.03$ in
\texttt{Gmsh}. The resulting initial mesh consists of $10{,}397$
third-order curvilinear quadrilateral elements. The initial condition
is a uniform Mach 3 inflow with $\rho=1$, $p=1$, and velocity
$\bm u=(3.55,0)$. Inflow and outflow boundary conditions are imposed on
the left and right boundaries, respectively, while slip wall boundary
conditions are used on the upper and lower boundaries as well as on the
cylinder surface.

In this test, we use a $\mathbb Q_3$ approximation with the mortar-based flux together with AMR on
the curvilinear mesh. The AMR parameters are
$C_{\mathrm{ref}}=0.03$, $C_{\mathrm{crs}}=0.015$, and the maximum
refinement level is $3$. Figure~\ref{fig:cylinder_mesh} shows the
initial curvilinear mesh. Figure~\ref{fig:cylinder1} presents the
Schlieren-like density field together with the corresponding shock
indicator distributions at times $t=0.15$, $1.5$, $2.4$, and $4.05$. Here the
Schlieren-like visualization is defined by
$\log(1+\|\nabla \rho\|)$, namely a logarithmic function of the density
gradient magnitude.

We notice that the AMR procedure remains very
successful: the shock indicator accurately identifies the dynamically
important regions, and the refinement tracks the evolving bow shock,
downstream shocks, and wake structures. A particularly important
feature in this benchmark is the development of Kelvin--Helmholtz
instabilities in the upper and lower wall\cite{YangFu26,chen2025high}. These small-scale roll-up structures are
notoriously difficult to resolve and, without AMR, typically require a
much larger computational cost in order to be captured clearly. In the
present computation, however, they are already resolved very well on a
moderate curvilinear mesh, for instance in the flow snapshots around
$t=2.4$ and $t=4.05$ in Figure~\ref{fig:cylinder1}. 

\begin{figure}[H]
  \centering
  \includegraphics[width=0.62\textwidth]{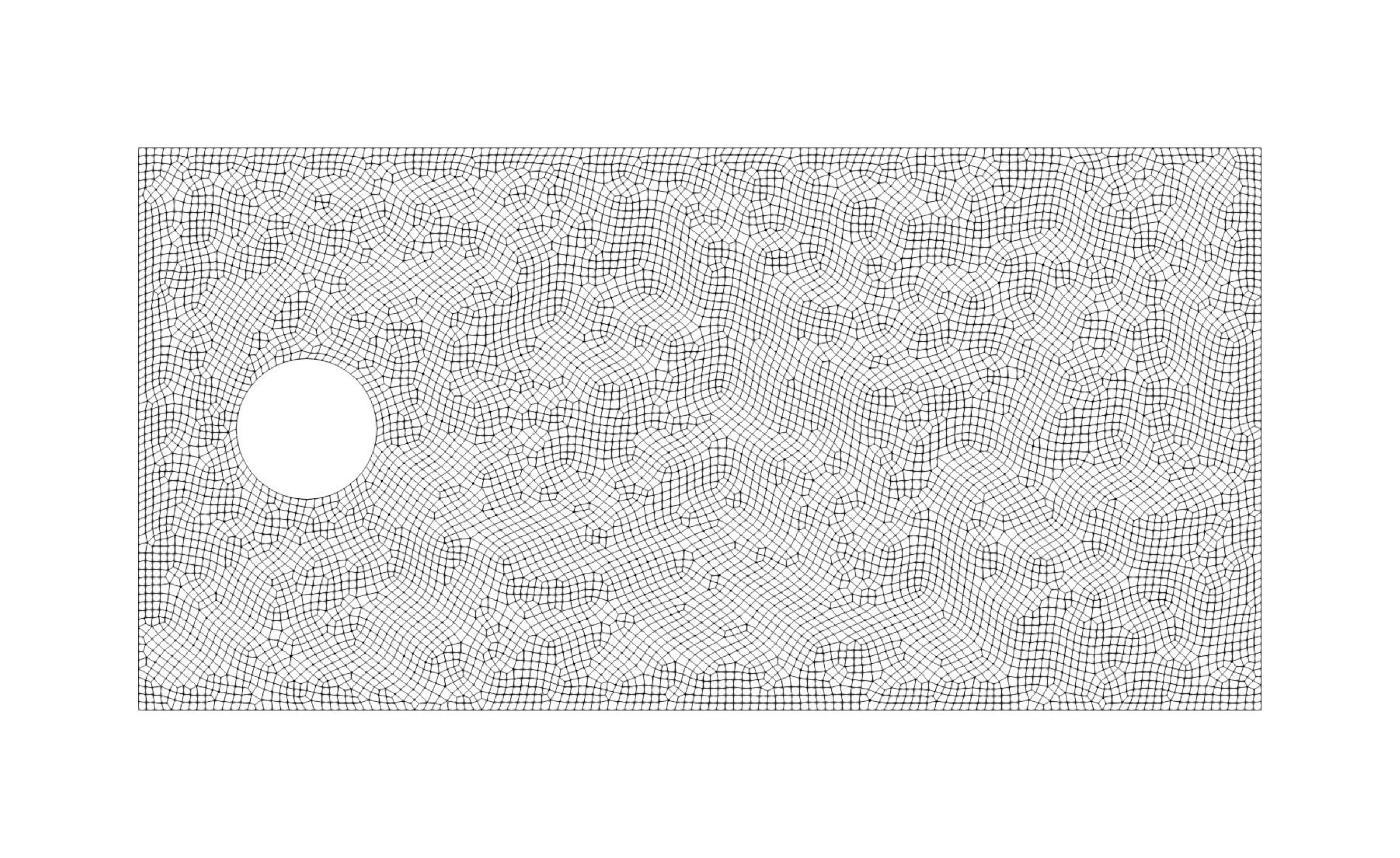}
  \caption{\textit{Supersonic flow past a circular cylinder}: initial
  third-order curvilinear quadrilateral mesh generated by
  \texttt{Gmsh}. The target mesh size in \texttt{Gmsh} is $0.03$, and
  the initial mesh contains $10{,}397$ elements.}
  \label{fig:cylinder_mesh}
\end{figure}

\begin{figure}[p]
  \centering
  \begin{subfigure}{0.47\textwidth}
    \centering
    \includegraphics[width=\linewidth]{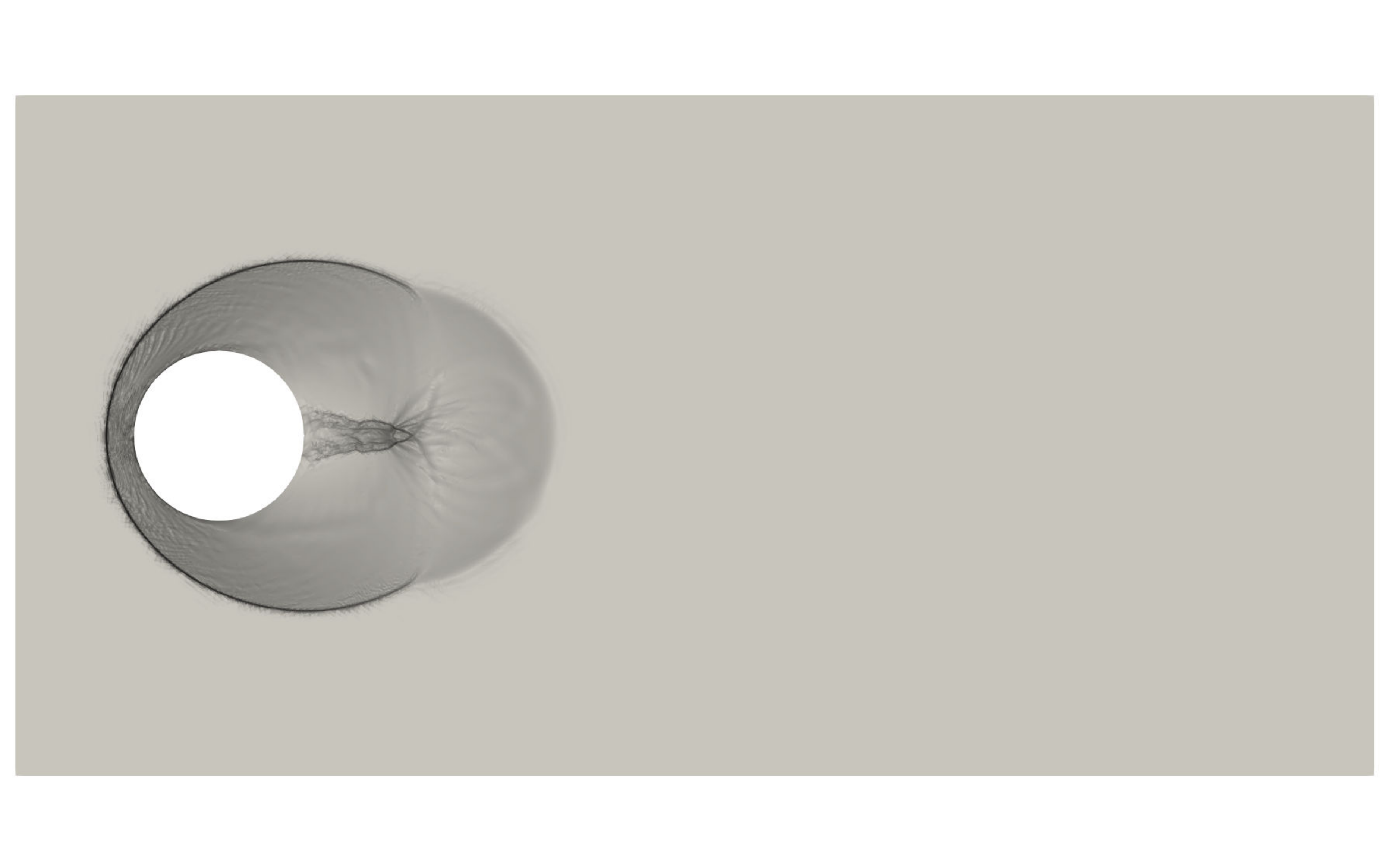}
    \caption{density at $t=0.15$}
  \end{subfigure}
  \hspace{0.02\textwidth}
  \begin{subfigure}{0.47\textwidth}
    \centering
    \includegraphics[width=\linewidth]{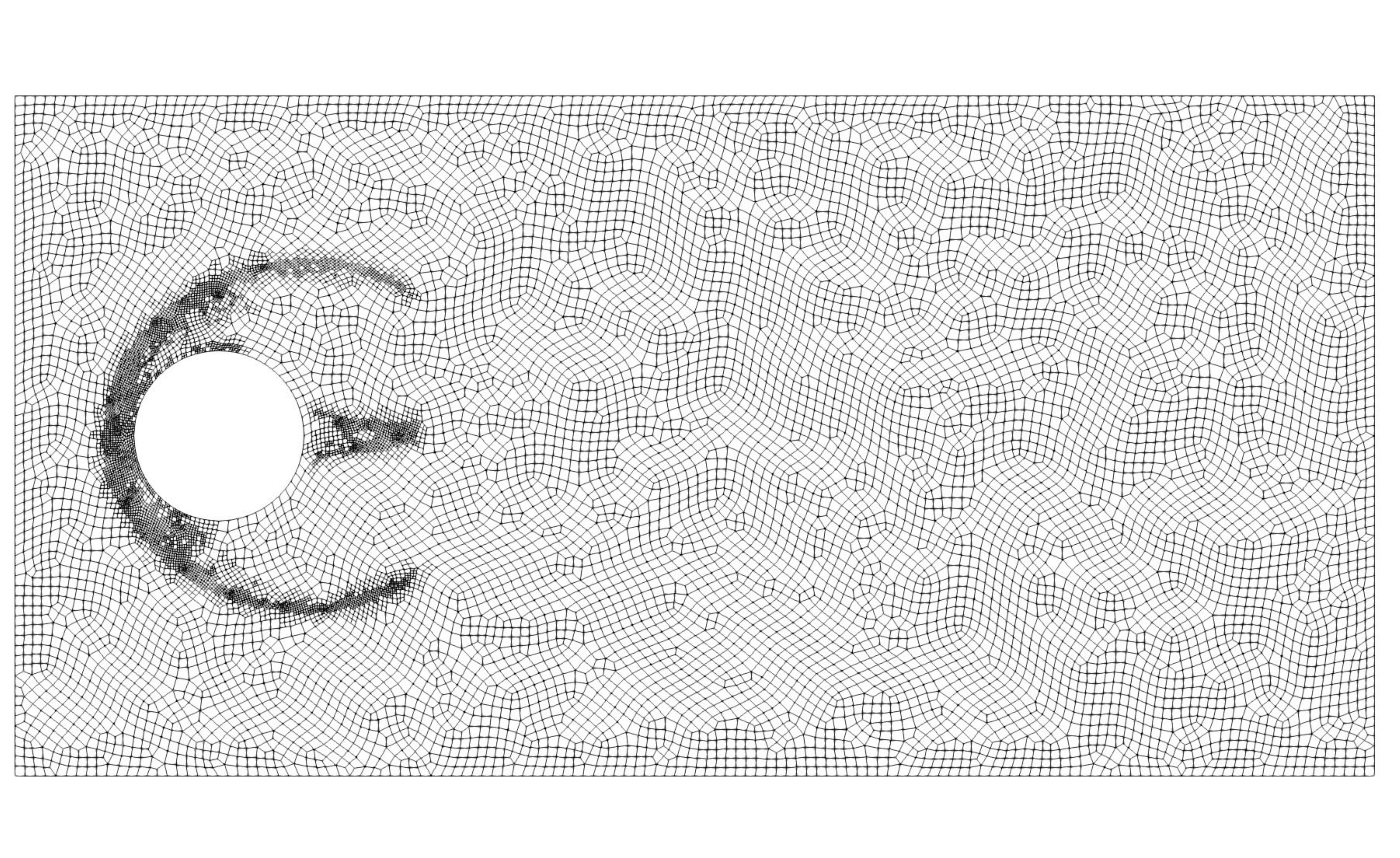}
    \caption{mesh at $t=0.15$}
  \end{subfigure}

  \begin{subfigure}{0.47\textwidth}
    \centering
    \includegraphics[width=\linewidth]{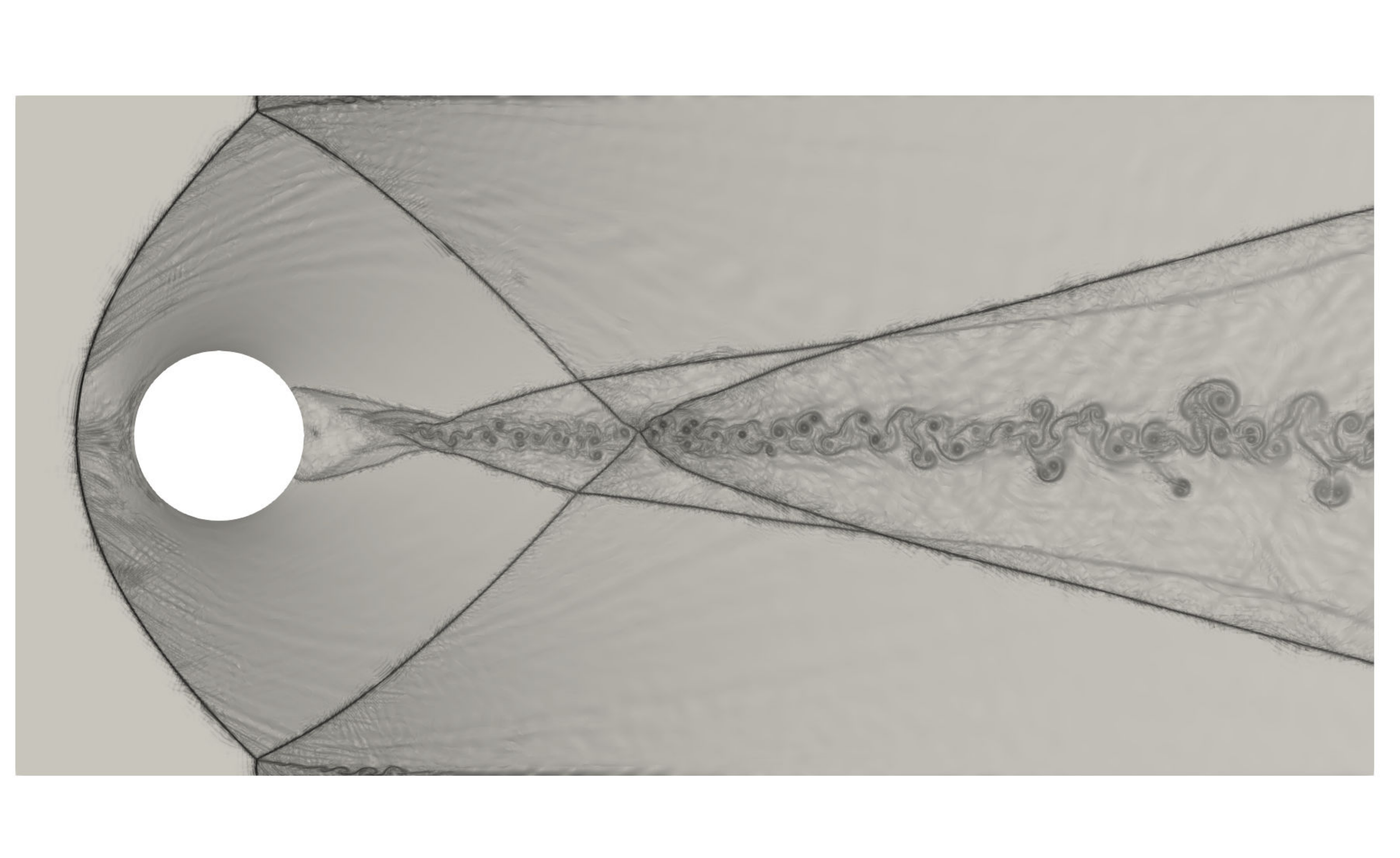}
    \caption{density at $t=1.5$}
  \end{subfigure}
  \hspace{0.02\textwidth}
  \begin{subfigure}{0.47\textwidth}
    \centering
    \includegraphics[width=\linewidth]{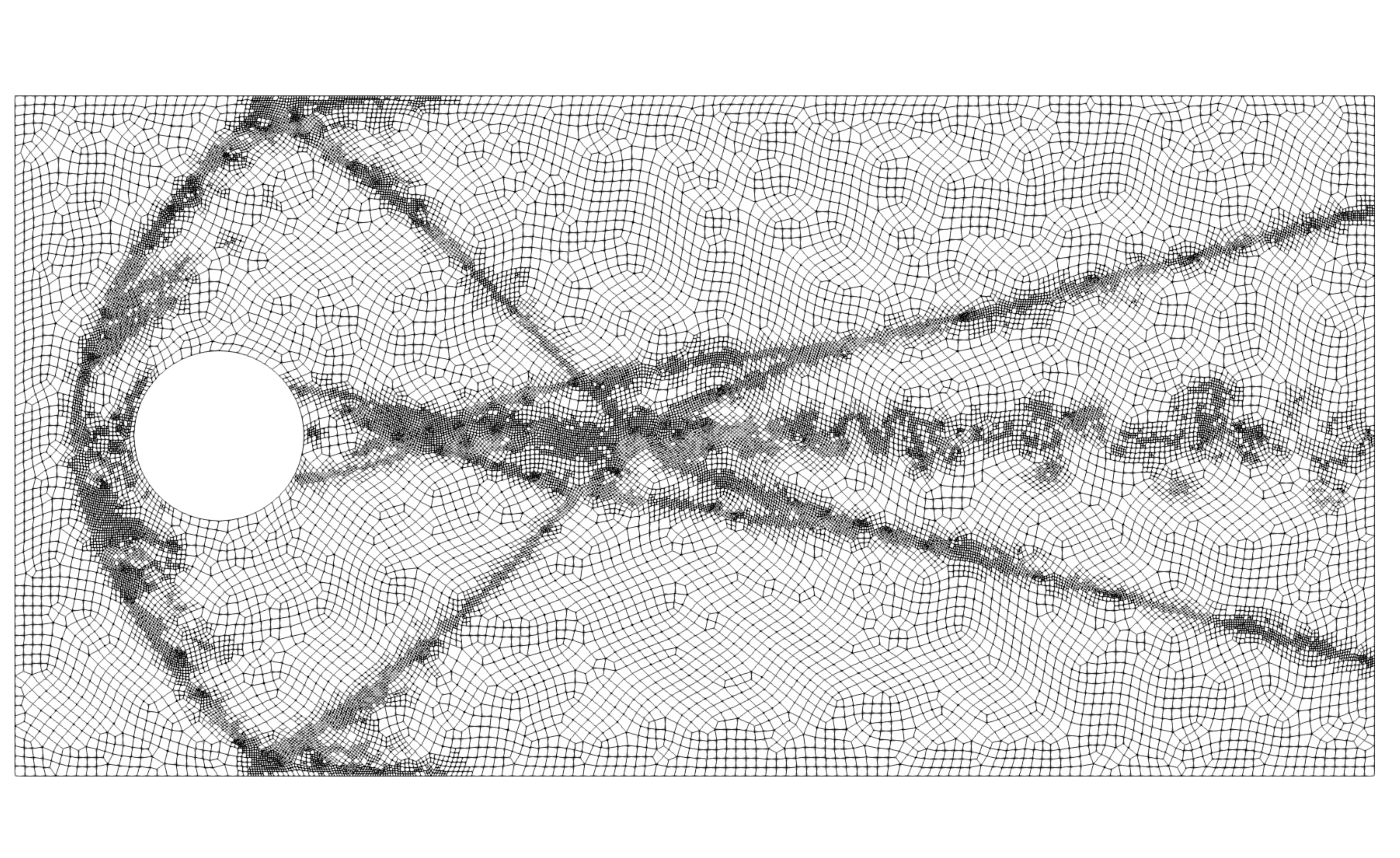}
    \caption{mesh at $t=1.5$}
  \end{subfigure}

  \begin{subfigure}{0.47\textwidth}
    \centering
    \includegraphics[width=\linewidth]{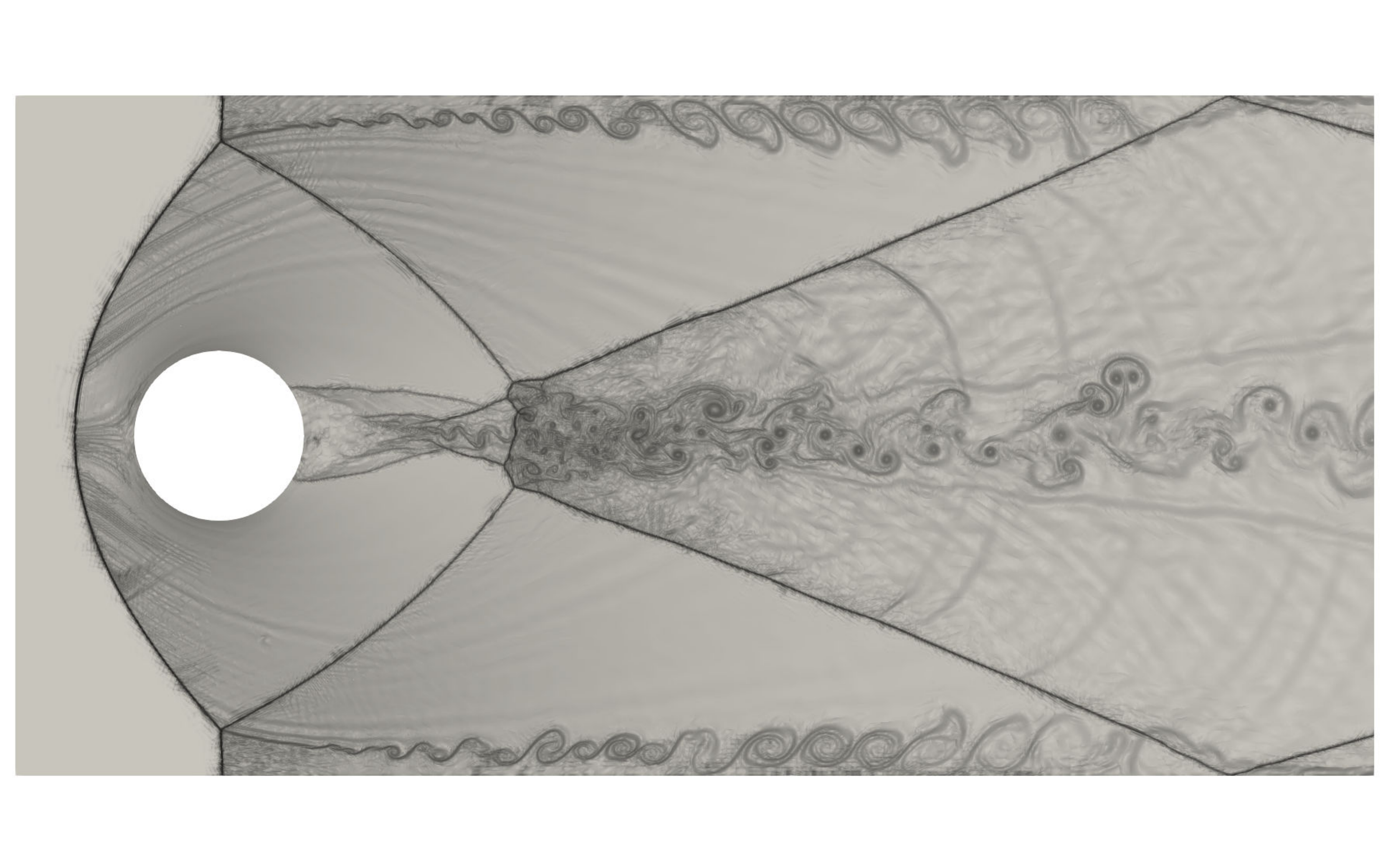}
    \caption{density at $t=2.4$}
  \end{subfigure}
  \hspace{0.02\textwidth}
  \begin{subfigure}{0.47\textwidth}
    \centering
    \includegraphics[width=\linewidth]{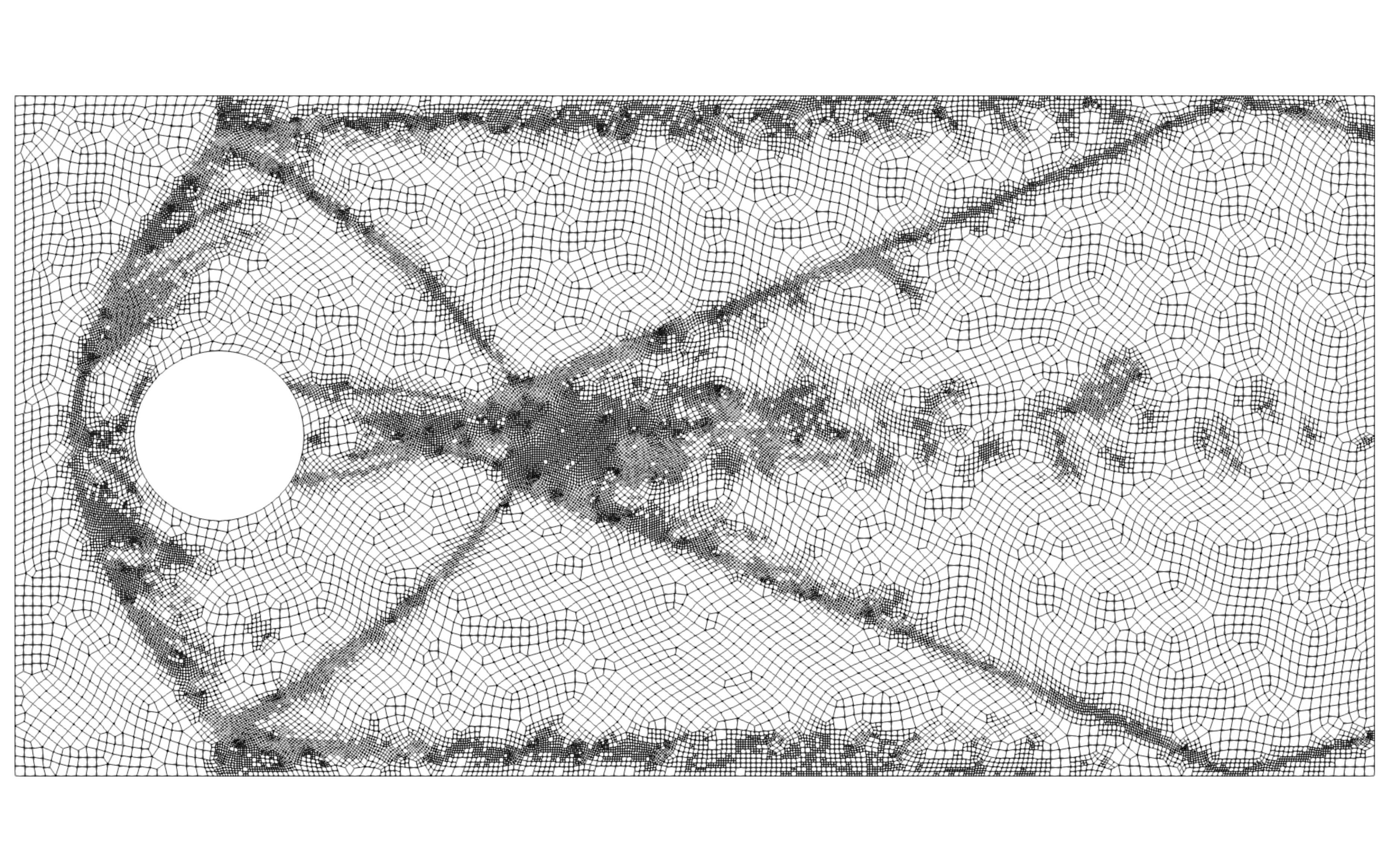}
    \caption{mesh at $t=2.4$}
  \end{subfigure}


  \begin{subfigure}{0.47\textwidth}
    \centering
    \includegraphics[width=\linewidth]{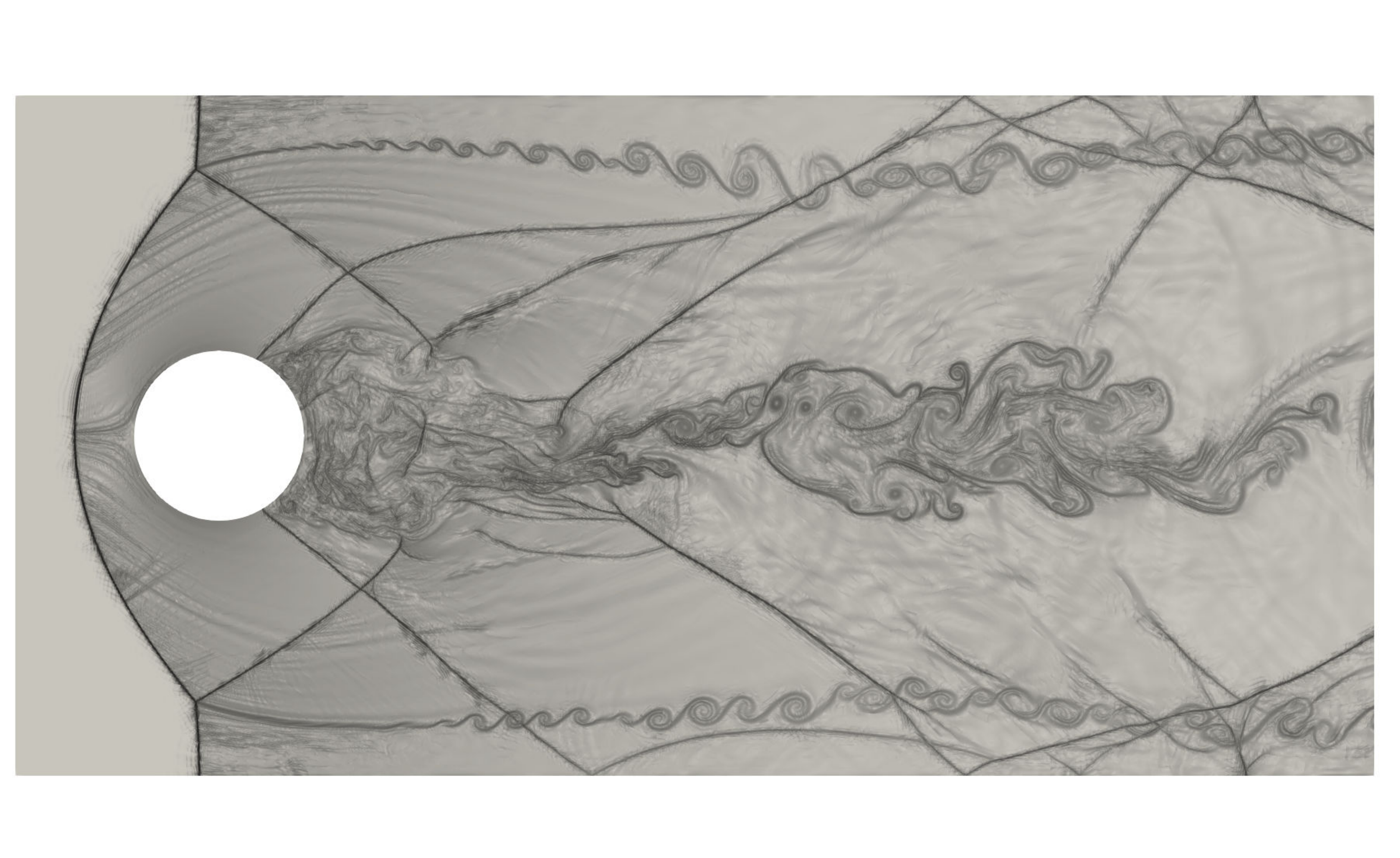}
    \caption{density at $t=4.05$}
  \end{subfigure}
  \hspace{0.02\textwidth}
  \begin{subfigure}{0.47\textwidth}
    \centering
    \includegraphics[width=\linewidth]{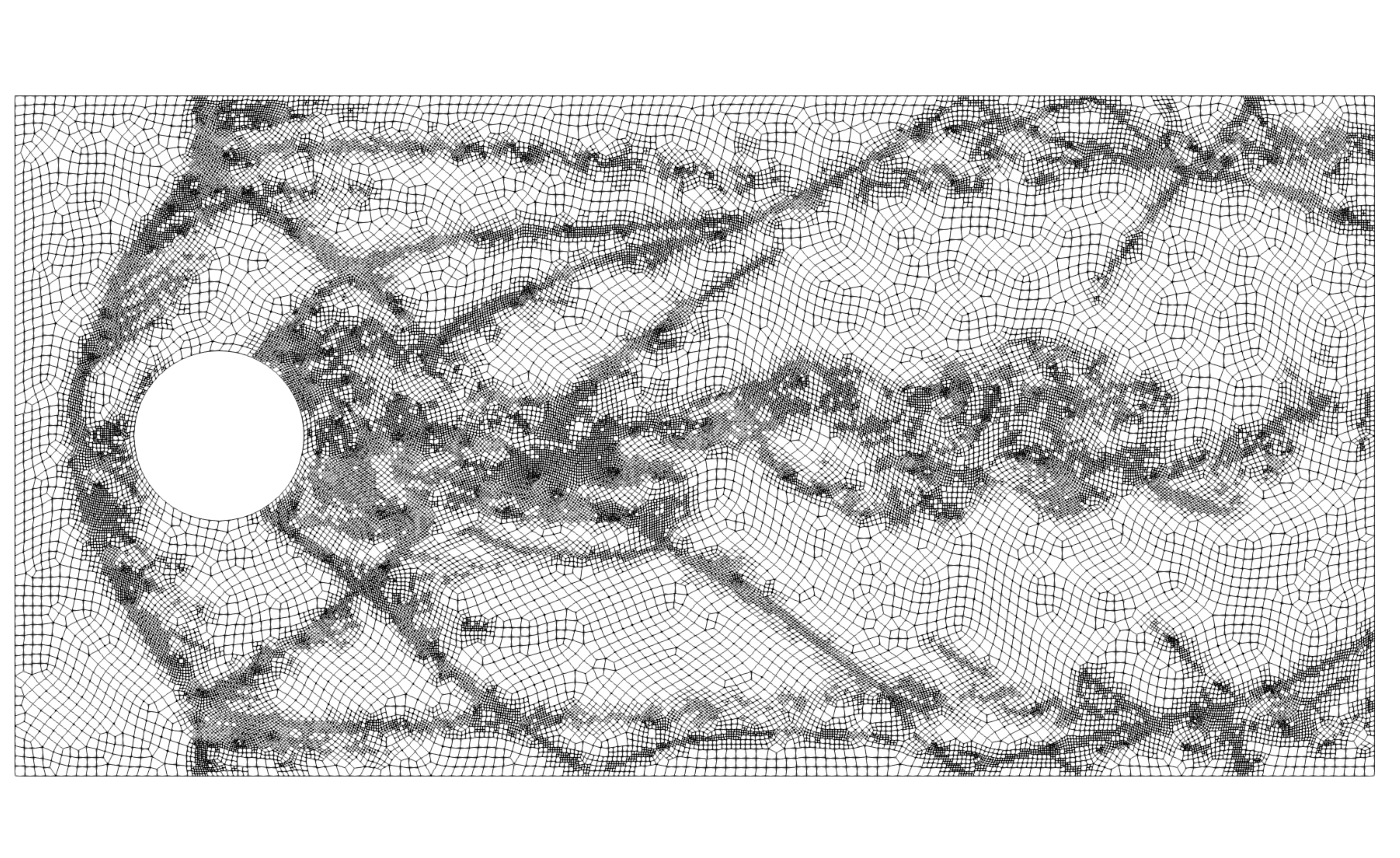}
    \caption{mesh at $t=4.05$}
  \end{subfigure}

  \caption{\textit{Supersonic flow past a circular cylinder}: the left
  column shows Schlieren-like plots of the density field, while the
  right column shows the corresponding shock indicator distributions at
  $t=0.15$, $1.5$, $2.4$, $3.15$, and $4.05$.}
  \label{fig:cylinder1}
\end{figure}

\section{Conclusion}
\label{sec:con}
This work is a follow-up to our previous study on entropy-stable,
positivity-preserving, oscillation-eliminating DGSEM on conforming
curvilinear meshes \cite{YangFu26}. The main goal here is to extend
that framework to adaptive mesh refinement with nonconforming
interfaces, while retaining the core ingredients of curvilinear
high-order discretization, entropy stability, positivity preservation,
and oscillation elimination.

To this end, we first developed an entropy-stable nonconforming
interface treatment, referred to in the numerical section as the
entropy-stable flux. This flux preserves primary conservation, satisfies a
semi-discrete entropy inequality, and, together with the Zhang--Shu
limiter, fits into a provably positivity-preserving framework on
curvilinear AMR meshes. The analysis also reveals an important
limitation: once negative entries appear in the nonconforming
interpolation operators, the resulting interface dissipation loses
formal high-order consistency, so this entropy-stable construction does
not retain the design accuracy for $N\ge 2$.

To overcome this loss of accuracy, we then considered a classical
mortar-based flux. For this flux, we proved a positivity-preserving
result under suitable assumptions and showed numerically that it
retains high-order accuracy on smooth problems. 

Beyond the interface flux design, we developed a complete AMR
framework for curvilinear DGSEM. This includes shock-indicator-based
refinement and derefinement, positivity-preserving treatment of
nonconforming interfaces, selective oscillation elimination on AMR
meshes, and a conservative and positivity-preserving data-transfer
procedure between successive AMR meshes. These components together
yield a practical and robust adaptive algorithm for compressible Euler
flows on curvilinear meshes.
 
At present, provable entropy stability and positivity
preservation can be achieved together through the entropy-stable flux,
while high-order accuracy is retained by the mortar-based flux with a
provable positivity-preserving property. However, provable entropy
stability, provable positivity preservation, and full high-order
accuracy cannot yet be obtained simultaneously in a single
nonconforming interface treatment for polynomial degree $N\ge 2$. Developing such a method remains an
important topic for future work.



\appendix

\section{Reference interpolation and projection matrices}
\label{app:interp_mats}
In this appendix, we provide explicit forms of the interpolation matrices \eqref{eq:P_C_to_Fk} and projection matrices \eqref{eq:P_Fk_to_C} used in the nonconforming interface construction.

\subsection*{Case $N=1$}

The LGL nodes on the coarse edge are $\{-1,1\}$. The fine-edge nodes are
\[
E_{F_1}: \{-1,0\}, \qquad
E_{F_2}: \{0,1\}.
\]
The interpolation matrices are
\[
P_{C\to F_1} =
\begin{pmatrix}
1 & 0 \\
\frac12 & \frac12
\end{pmatrix},
\qquad
P_{C\to F_2} =
\begin{pmatrix}
\frac12 & \frac12 \\
0 & 1
\end{pmatrix}.
\]
The corresponding projection matrices are
\[
P_{F_1\to C} =
\begin{pmatrix}
\frac12 & \frac14 \\
0 & \frac14
\end{pmatrix},
\qquad
P_{F_2\to C} =
\begin{pmatrix}
\frac14 & 0 \\
\frac14 & \frac12
\end{pmatrix}.
\]

\subsection*{Case $N=2$}

The LGL nodes on the coarse edge are $\{-1,0,1\}$. The fine-edge nodes are
\[
E_{F_1}: \left\{-1,-\tfrac12,0\right\}, \qquad
E_{F_2}: \left\{0,\tfrac12,1\right\}.
\]
The interpolation matrices are
\[
P_{C\to F_1} =
\begin{pmatrix}
1 & 0 & 0 \\
\frac{3}{8} & \frac{3}{4} & -\frac{1}{8} \\
0 & 1 & 0
\end{pmatrix},
\qquad
P_{C\to F_2} =
\begin{pmatrix}
0 & 1 & 0 \\
-\frac{1}{8} & \frac{3}{4} & \frac{3}{8} \\
0 & 0 & 1
\end{pmatrix}.
\]
The corresponding projection matrices are
\[
P_{F_1\to C} =
\begin{pmatrix}
\frac12 & \frac34 & 0 \\
0 & \frac38 & \frac18 \\
0 & -\frac14 & 0
\end{pmatrix},
\qquad
P_{F_2\to C} =
\begin{pmatrix}
0 & -\frac14 & 0 \\
\frac18 & \frac38 & 0 \\
0 & \frac34 & \frac12
\end{pmatrix}.
\]

\subsection*{Case $N=3$}

The LGL nodes on the coarse edge are
\[
\left\{-1,-\frac{1}{\sqrt{5}},\frac{1}{\sqrt{5}},1\right\}.
\]
The fine-edge nodes are obtained by affine restriction:
\[
E_{F_1}:\left\{-1,-\frac12-\frac{\sqrt5}{10},-\frac12+\frac{\sqrt5}{10},0\right\},
\qquad
E_{F_2}:\left\{0,\frac12-\frac{\sqrt5}{10},\frac12+\frac{\sqrt5}{10},1\right\}.
\]

The interpolation matrices are
\[
P_{C\to F_1}=
\begin{pmatrix}
1 & 0 & 0 & 0 \\[0.3em]
\frac18+\frac{\sqrt5}{10} &
\frac12+\frac{\sqrt5}{8} &
\frac38-\frac{\sqrt5}{4} &
\frac{\sqrt5}{40} \\[0.3em]
\frac18-\frac{\sqrt5}{10} &
\frac38+\frac{\sqrt5}{4} &
\frac12-\frac{\sqrt5}{8} &
-\frac{\sqrt5}{40} \\[0.3em]
-\frac18 & \frac58 & \frac58 & -\frac18
\end{pmatrix},
\]
\[
P_{C\to F_2}=
\begin{pmatrix}
-\frac18 & \frac58 & \frac58 & -\frac18 \\[0.3em]
-\frac{\sqrt5}{40} &
\frac12-\frac{\sqrt5}{8} &
\frac38+\frac{\sqrt5}{4} &
\frac18-\frac{\sqrt5}{10} \\[0.3em]
\frac{\sqrt5}{40} &
\frac38-\frac{\sqrt5}{4} &
\frac12+\frac{\sqrt5}{8} &
\frac18+\frac{\sqrt5}{10} \\[0.3em]
0 & 0 & 0 & 1
\end{pmatrix}.
\]

The corresponding projection matrices are
\[
P_{F_1\to C}=
\begin{pmatrix}
\frac12 &
\frac{5}{16}+\frac{\sqrt5}{4} &
\frac{5}{16}-\frac{\sqrt5}{4} &
-\frac{1}{16} \\[0.3em]
0 &
\frac14+\frac{\sqrt5}{16} &
\frac{3}{16}+\frac{\sqrt5}{8} &
\frac{1}{16} \\[0.3em]
0 &
\frac{3}{16}-\frac{\sqrt5}{8} &
\frac14-\frac{\sqrt5}{16} &
\frac{1}{16} \\[0.3em]
0 &
\frac{\sqrt5}{16} &
-\frac{\sqrt5}{16} &
-\frac{1}{16}
\end{pmatrix},
\]
\[
P_{F_2\to C}=
\begin{pmatrix}
-\frac{1}{16} &
-\frac{\sqrt5}{16} &
\frac{\sqrt5}{16} &
0 \\[0.3em]
\frac{1}{16} &
\frac14-\frac{\sqrt5}{16} &
\frac{3}{16}-\frac{\sqrt5}{8} &
0 \\[0.3em]
\frac{1}{16} &
\frac{3}{16}+\frac{\sqrt5}{8} &
\frac14+\frac{\sqrt5}{16} &
0 \\[0.3em]
-\frac{1}{16} &
\frac{5}{16}-\frac{\sqrt5}{4} &
\frac{5}{16}+\frac{\sqrt5}{4} &
\frac12
\end{pmatrix}.
\]

\section{Proof of Theorem \ref{thm:global_es_nc}}
\label{app:proof_global_es}
\begin{proof}
Summing the single-element conservation identity \eqref{eq_cc} and the single-element entropy balance \eqref{eq_ss} over all elements, and using periodic boundary conditions, the global evolution of the conserved variables and of the entropy reduces to the sum of net contributions across interior interfaces. On conforming interfaces, the standard pairwise cancellation argument applies, and conservation together with the entropy inequality follow directly from \eqref{eq:esf_conf}. It therefore suffices to analyze one representative nonconforming interface
\[
E_C = E_{F_1}\cup E_{F_2},
\]
with notation as in Section~\ref{subsec:nc_flux}.

We first prove global conservation. The net contribution of this nonconforming interface to the conservation balance is
\begin{equation}
\label{eq:net_cons_pf}
\mathrm{NET}_{\mathrm{cons}}
=
\sum_{k=1}^2 \sum_{i=0}^N w_i
\bigl(\bm F_i^{\,F_k,*}\cdot \bm n_i^{F_k}\bigr)
+
\sum_{j=0}^N w_j
\bigl(\bm F_j^{\,C,*}\cdot \bm n_j^C\bigr),
\end{equation}
where $w_i^{F_k}$ and $w_j^C$ are the quadrature weights on the reference edges $E_{F_k}^{\mathrm{ref}}$ and $E_C^{\mathrm{ref}}$, respectively. Substituting \eqref{eq:fine_flux} and \eqref{eq:coarse_flux} into \eqref{eq:net_cons_pf}, we obtain
\begin{align*}
\mathrm{NET}_{\mathrm{cons}}
&=
\sum_{k=1}^2 \sum_{i=0}^N w_i
\sum_{j=0}^N (P_{C\to F_k})_{ij}
\,
\bm F^\star(\bm U_i^{F_k},\bm U_j^C)\cdot \bm n_{ij}^{F_k,C}
\\
&\quad
-
\sum_{k=1}^2 \sum_{j=0}^N 2w_j
\sum_{i=0}^N (P_{F_k\to C})_{ji}
\,
\bm F^\star(\bm U_i^{F_k},\bm U_j^C)\cdot \bm n_{ij}^{F_k,C}
\\
&=
\sum_{k=1}^2 \sum_{i=0}^N \sum_{j=0}^N
\Bigl(
w_i(P_{C\to F_k})_{ij}
-
2w_j(P_{F_k\to C})_{ji}
\Bigr)
\bm F^\star(\bm U_i^{F_k},\bm U_j^C)\cdot \bm n_{ij}^{F_k,C}.
\end{align*}
By the compatibility relation \eqref{eq:proj_compat},
\[
P_{C\to F_k}^T M_{F_k}=M_C P_{F_k\to C},
\]
we have, componentwise,
\begin{align}
\label{comp}    
w_i(P_{C\to F_k})_{ij}
=
2w_j(P_{F_k\to C})_{ji}.
\end{align}
Hence every term cancels and
\[
\mathrm{NET}_{\mathrm{cons}}=0.
\]
Since the same cancellation holds on every nonconforming interface, while conforming interfaces are conservative by the standard DGSEM argument, summing over all interfaces yields \eqref{eq:global_cons}.

We next turn to entropy stability. The net entropy contribution of the same nonconforming interface is
\begin{align}
\label{eq:net_entropy_pf}
\mathrm{NET}_{\mathrm{ent}}
&=
\sum_{k=1}^2 \sum_{i=0}^N w_i
\Bigl[
\bigl(\bm F_i^{\,F_k,*}\cdot \bm n_i^{F_k}\bigr)\cdot \bm V_i^{F_k}
-
\bm \psi_i^{F_k}\cdot \bm n_i^{F_k}
\Bigr]
\nonumber\\
&\quad
+
\sum_{j=0}^N w_j
\Bigl[
\bigl(\bm F_j^{\,C,*}\cdot \bm n_j^C\bigr)\cdot \bm V_j^C
-
\bm \psi_j^C\cdot \bm n_j^C
\Bigr].
\end{align}

We first rewrite the potential terms on the fine edges. Using \eqref{nrm1}--\eqref{nrm2},
\[
\sum_{j=0}^N (P_{C\to F_k})_{ij}\,\bm n_j^C = -2\bm n_i^{F_k},
\qquad
\sum_{j=0}^N (P_{C\to F_k})_{ij}=1,
\]
we obtain
\begin{align}
\label{eq:navg_fine}
\sum_{j=0}^N (P_{C\to F_k})_{ij}\,\bm n_{ij}^{F_k,C}
&=
\frac12
\sum_{j=0}^N (P_{C\to F_k})_{ij}
\bigl(\bm n_i^{F_k}-\frac12\bm n_j^C\bigr)
\nonumber\\
&=
\frac12 \bm n_i^{F_k}
-
\frac14 \sum_{j=0}^N (P_{C\to F_k})_{ij}\,\bm n_j^C
=
\bm n_i^{F_k}.
\end{align}
Therefore,
\begin{equation}
\label{eq:psi_fine_rewrite}
\sum_{i=0}^N w_i\,\bm \psi_i^{F_k}\cdot \bm n_i^{F_k}
=
\sum_{i=0}^N \sum_{j=0}^N
w_i(P_{C\to F_k})_{ij}\,
\bm \psi_i^{F_k}\cdot \bm n_{ij}^{F_k,C}.
\end{equation}

We next rewrite the coarse-edge potential term. Using \eqref{eq:proj_compat} together with \eqref{nrm1}--\eqref{nrm2}, one obtains
\begin{equation}
\label{eq:proj_sum_one}
\sum_{k=1}^2 \sum_{i=0}^N (P_{F_k\to C})_{ji}=1,
\end{equation}
and
\begin{equation}
\label{eq:proj_sum_normal}
\sum_{k=1}^2 \sum_{i=0}^N (P_{F_k\to C})_{ji}\,\bm n_i^{F_k}
=
-\frac12\bm n_j^C.
\end{equation}
Consequently,
\begin{align}
\label{eq:navg_coarse_correct}
\sum_{k=1}^2 \sum_{i=0}^N (P_{F_k\to C})_{ji}\,\bm n_{ij}^{F_k,C}
&=
\frac12
\sum_{k=1}^2 \sum_{i=0}^N (P_{F_k\to C})_{ji}
\bigl(\bm n_i^{F_k}-\frac12\bm n_j^C\bigr)
\nonumber\\
&=
\frac12\bigl(-\frac12\bm n_j^C\bigr)
-
\frac14\bm n_j^C
=
-\frac12\bm n_j^C.
\end{align}
It follows that
\begin{align}
\label{eq:psi_coarse_rewrite}
\sum_{j=0}^N w_j\,\bm \psi_j^C\cdot \bm n_j^C
&=
-\sum_{k=1}^2 \sum_{j=0}^N \sum_{i=0}^N
2w_j (P_{F_k\to C})_{ji}\,
\bm \psi_j^C\cdot \bm n_{ij}^{F_k,C}
\nonumber\\
&=
-\sum_{k=1}^2 \sum_{i=0}^N \sum_{j=0}^N
w_i (P_{C\to F_k})_{ij}\,
\bm \psi_j^C\cdot \bm n_{ij}^{F_k,C},
\end{align}
where the second equality again uses \eqref{comp}.

Substituting \eqref{eq:fine_flux}, \eqref{eq:coarse_flux}, \eqref{eq:psi_fine_rewrite}, and \eqref{eq:psi_coarse_rewrite} into \eqref{eq:net_entropy_pf}, and using the compatibility relation \eqref{comp},
we arrive at
\begin{align*}
\mathrm{NET}_{\mathrm{ent}}
&=
\sum_{k=1}^2 \sum_{i=0}^N \sum_{j=0}^N
w_i(P_{C\to F_k})_{ij}
\Bigl[
\bigl(\bm F^\star(\bm U_i^{F_k},\bm U_j^C)\cdot \bm n_{ij}^{F_k,C}\bigr)
\cdot (\bm V_i^{F_k}-\bm V_j^C)
\\
&\qquad\qquad\qquad\qquad
-
(\bm \psi_i^{F_k}-\bm \psi_j^C)\cdot \bm n_{ij}^{F_k,C}
\Bigr].
\end{align*}
By the entropy-stability assumption \eqref{eq:esf_nc}, each term in the brackets is non-positive. Hence
\[
\mathrm{NET}_{\mathrm{ent}}\le 0.
\]
Since the same argument applies to every nonconforming interface, while conforming interfaces satisfy the entropy inequality by \eqref{eq:esf_conf}, summing over all interfaces yields \eqref{eq:global_entropy}. This completes the proof.
\end{proof}


\section{Proof of Theorem \ref{thm:cell_average_pp}}
\label{app:proof_pp}

\begin{proof}
The proof proceeds by rewriting the forward Euler update \eqref{eq:compact_FE} of the cell average as a nonnegative linear combination of admissible states. Since the set $\mathcal G$ is convex and positively homogeneous, such a representation immediately implies that the updated cell average is admissible.

To this end, we consider the update \eqref{eq:compact_FE} in the form
\begin{equation}
\label{eq:wt}
|\Omega_e|\,\overline{\bm U}^{e,\,n+1}
=
\sum_{i,j=0}^N w_i w_j \mathcal J_{ij}^e \bm U_{ij}^{e,n}
-
\Delta t
\sum_{E\subset\partial\Omega_e}\sum_{r=0}^N w_r
\bigl(\bm F_r^{E,*,n}\cdot \bm n_r^E\bigr),
\end{equation}
and show that the right-hand side can be written as a nonnegative linear combination of admissible states.

We therefore examine each edge contribution in \eqref{eq:wt} and isolate the part that subtracts the local nodal state. For notational simplicity, we suppress the time index $n$ in the following.

\medskip
\noindent
\textit{Conforming edges.}
Let $E\subset\partial\Omega_e$ be a conforming edge, and define the unit normal
\[
\bm n_{u,r}^E := \frac{\bm n_r^E}{\|\bm n_r^E\|}.
\]
Let $\bm U_r^E$ and $\bm U_r^{E,\mathrm{nbr}}$ denote the interior and exterior nodal states at node $r$ on $E$. Using the local Lax--Friedrichs flux \eqref{lax}, we obtain the pointwise decomposition
\begin{align}
\label{eq:conf_pp_decomp}
-\,\bm F_r^{E,*}\cdot \bm n_r^E
&=
\frac{\alpha_r^E\|\bm n_r^E\|}{2}
\left(
\bm U_r^{E,\mathrm{nbr}}
-
\frac{\bm F(\bm U_r^{E,\mathrm{nbr}})\cdot \bm n_{u,r}^E}{\alpha_r^E}
\right)
\nonumber\\
&\quad+
\frac{\alpha_r^E\|\bm n_r^E\|}{2}
\left(
\bm U_r^{E}
-
\frac{\bm F(\bm U_r^{E})\cdot \bm n_{u,r}^E}{\alpha_r^E}
\right)
-
\alpha_r^E\|\bm n_r^E\|\,\bm U_r^E.
\end{align}
By Lemma~\ref{lem:LF_admissible} and the choice of $\alpha_r^E$ in \eqref{alpha}, each state in parentheses belongs to $\mathcal G$. Hence, the first two terms form a positive combination of admissible states, while the only negative contribution is the final term.

Multiplying \eqref{eq:conf_pp_decomp} by $w_r$ and summing over all nodes $r$ on $E$ gives
\begin{align}
-\,\sum_{r=0}^N w_r\bigl(\bm F_r^{E,*}\cdot \bm n_r^E\bigr)
&=
\bm \Pi_E
-
\sum_{r=0}^N \beta_r^E \bm U_r^E,
\end{align}
where $\bm \Pi_E\in \mathcal{G}$ is a positive combination of admissible states and
\begin{align}
\label{betaE}
\beta_r^E := w_r \alpha_r^E \|\bm n_r^E\| \ge 0.
\end{align}
Thus, the contribution of a conforming edge is a positive combination of admissible states minus a nonnegative multiple of the interior nodal states.

\medskip
\noindent
\textit{Fine nonconforming edges.}
Let $E=E_{F_k}\subset\partial\Omega_e$ be a fine nonconforming edge. Let $\{\bm U_i^{F_k}\}_{i=0}^N$ denote the interior nodal states on $E$, and let $\{\bm U_j^C\}_{j=0}^N$ denote the nodal states on the neighboring coarse side. Define
\[
\bm n_{u,ij}^{F_k,C} := \frac{\bm n_{ij}^{F_k,C}}{\|\bm n_{ij}^{F_k,C}\|}.
\]
Using \eqref{eq:fine_flux} together with the modified Lax--Friedrichs flux \eqref{lxf}, we obtain the pointwise decomposition
\begin{align}
\label{eq:fine_pp_decomp}
-\,\bm F_i^{\,F_k,*}\cdot \bm n_i^{F_k}
&=
\sum_{j=0}^N
\frac{\bigl|(P_{C\to F_k})_{ij}\bigr|\alpha_{i}^{F_k}\|\bm n_{i}^{F_k}\|}{2}
\left(
\bm U_j^C
-
\frac{\bm F(\bm U_j^C)\cdot \bm n_{u,ij}^{F_k,C}}{\alpha_{i}^{F_k}\|\bm n_{i}^{F_k}\|/\|\bm n_{i,j}^{F_k,C}\|}
\right)
\nonumber\\
&\quad+
\sum_{j=0}^N
\frac{\bigl|(P_{C\to F_k})_{ij}\bigr|\alpha_{i}^{F_k}\|\bm n_{i}^{F_k}\|}{2}
\left(
\bm U_i^{F_k}
-
\frac{\bm F(\bm U_i^{F_k})\cdot \bm n_{u,ij}^{F_k,C}}{\alpha_{i}^{F_k}\|\bm n_{i}^{F_k}\|/\|\bm n_{i,j}^{F_k,C}\|}
\right)
\nonumber\\
&\quad-
\sum_{j=0}^N
\bigl|(P_{C\to F_k})_{ij}\bigr|\alpha_{i}^{F_k}\|\bm n_{i}^{F_k}\|\,\bm U_i^{F_k}.
\end{align}

Multiplying \eqref{eq:fine_pp_decomp} by $w_i$ and summing over all nodes $i$ on $E$ gives
\begin{align}
-\,\sum_{i=0}^N w_i\bigl(\bm F_i^{\,F_k,*}\cdot \bm n_i^{F_k}\bigr)
&=
\bm \Pi_{E_{F_k}}
-
\sum_{i=0}^N \beta_i^{F_k}\bm U_i^{F_k},
\end{align}
where $\bm \Pi_{E_{F_k}}\in \mathcal{G}$ is a positive combination of admissible states and
\begin{align}
\label{betaF}
\beta_i^{F_k}
:=
w_i \sum_{j=0}^N
\bigl|(P_{C\to F_k})_{ij}\bigr|\alpha_{i}^{F_k}\|\bm n_{i}^{F_k}\|
\ge 0.    
\end{align}
Here the absolute value $\bigl|(P_{C\to F_k})_{ij}\bigr|$ is essential: owing to the sign structure in \eqref{lxf}, all coefficients in \eqref{eq:fine_pp_decomp} are nonnegative.

\medskip
\noindent
\textit{Coarse nonconforming edges.}
Let $E=E_C\subset\partial\Omega_e$ be a coarse nonconforming edge. Let $\{\bm U_j^C\}_{j=0}^N$ denote the interior nodal states on the coarse side of $E$, and let $\{\bm U_i^{F_k}\}_{i=0}^N$, $k=1,2$, denote the nodal states on the two fine segments of the same physical edge. Using \eqref{eq:coarse_flux} together with the modified Lax--Friedrichs flux \eqref{lxf}, we obtain the pointwise decomposition
\begin{align}
\label{eq:coarse_pp_decomp}
-\,\bm F_j^{\,C,*}\cdot \bm n_j^C
&=
\sum_{k=1}^2\sum_{i=0}^N
\bigl| (P_{F_k\to C})_{ji} \bigr|
\alpha_{i}^{F_k}\|\bm n_{i}^{F_k}\|
\left(
\bm U_i^{F_k}
+
\frac{\bm F(\bm U_i^{F_k})\cdot \bm n_{u,ij}^{F_k,C}}{\alpha_{i}^{F_k}\|\bm n_{i}^{F_k}\|/\|\bm n_{ij}^{F_k,C}\|
}
\right)
\nonumber\\
&\quad+
\sum_{k=1}^2\sum_{i=0}^N
\bigl| (P_{F_k\to C})_{ji} \bigr|
\alpha_{i}^{F_k}\|\bm n_{i}^{F_k}\|\left(
\bm U_j^{C}
+
\frac{\bm F(\bm U_j^{C})\cdot \bm n_{u,ij}^{F_k,C}}{\alpha_{i}^{F_k}\|\bm n_{i}^{F_k}\|/\|\bm n_{ij}^{F_k,C}\|}
\right)
\nonumber\\
&\quad-
2\sum_{k=1}^2\sum_{i=0}^N
\bigl| (P_{F_k\to C})_{ji} \bigr|
\alpha_{i}^{F_k}\|\bm n_{i}^{F_k}\|\,\bm U_j^C.
\end{align}
Here we have used the compatibility relation \eqref{comp}
\[
 (P_{F_k\to C})_{ji}\,\mathrm{sign}\bigl((P_{C\to F_k})_{ij}\bigr)
=
\bigl|(P_{F_k\to C})_{ji}\bigr|,
\]
which ensures nonnegativity of all coefficients.

Multiplying \eqref{eq:coarse_pp_decomp} by $w_j$ and summing over all nodes $j$ on $E$ gives
\begin{align}
-\,\sum_{j=0}^N w_j\bigl(\bm F_j^{\,C,*}\cdot \bm n_j^C\bigr)
&=
\bm \Pi_{E_C}
-
\sum_{j=0}^N \beta_j^C\,\bm U_j^C,
\end{align}
where $\bm \Pi_{E_C}\in \mathcal{G}$ is a positive combination of admissible states and
\begin{align}
\label{betaC}
\beta_j^C
:=
2w_j \sum_{k=1}^2\sum_{i=0}^N
\bigl| (P_{F_k\to C})_{ji} \bigr|
\alpha_{i}^{F_k}\|\bm n_{i}^{F_k}\|
\ge 0.    
\end{align}

\medskip
Summing the edge contributions on the right-hand side of \eqref{eq:wt}, we obtain
\begin{equation}
\label{eq:FE_convex_combination}
\sum_{i,j=0}^N
w_i w_j \mathcal J_{ij}^e \bm U_{ij}^{e}
-
\Delta t\,\bm D_e
+
\Delta t\,\bm \Pi_e,
\end{equation}
where $\bm \Pi_e$ is a positive combination of admissible states, and $\bm D_e$ collects the negative contributions from the edge decompositions. 
More precisely, $\bm D_e$ can be written in nodal form as
\[
\bm D_e
=
\sum_{(m,n)\in\mathcal B_e}
\left(
\sum_{E\ni(m,n)} \beta_r^E
+\sum_{k:\,E_{F_k}\ni(m,n)} \beta_i^{F_k}
+\sum_{E_C\ni(m,n)} \beta_j^C
\right)\bm U_{mn}^{e},
\]
where each sum collects the contributions from all edges incident to the boundary node $(m,n)$.
Each of these terms is associated with a boundary nodal state of $\Omega_e$ and therefore already appears in the cell-average sum
\[
\sum_{i,j=0}^N w_i w_j \mathcal J_{ij}^e \bm U_{ij}^{e}.
\]
By the CFL condition \eqref{eq:CFL_pp}, the coefficient of each boundary nodal state remains nonnegative after subtracting the corresponding edge contributions. For interior nodes, no subtraction occurs, so the coefficient remains $w_i w_j \mathcal J_{ij}^e>0$.

It follows that \eqref{eq:FE_convex_combination} expresses $|\Omega_e|\,\overline{\bm U}^{e,\,n+1}$ as a nonnegative linear combination of admissible states. Since all nodal states $\bm U_{ij}^{e}$ belong to $\mathcal G$, and $\bm \Pi_e\in\mathcal G$, convexity and positive homogeneity of $\mathcal G$ imply that
\[
\overline{\bm U}^{e,\,n+1}\in\mathcal G.
\]
This completes the proof.
\end{proof}

\section*{Acknowledgements}
The authors would like to thank Jesse Chan for helpful discussion on entropy stable numerical flux construction on curvilinear nonconforming interfaces, and Chen Liu for the setup on the high mach astrophysical jet problem.

\section*{CRediT authorship contribution statement}
\textbf{Jielin Yang}: Writing – original draft, Writing – review \& editing, Software, Methodology, Conceptualization.
\textbf{Guosheng Fu}: Writing – original draft, Writing – review \& editing, Software, Methodology, Conceptualization.


\bibliographystyle{elsarticle-num}
\bibliography{reference}

\end{document}